\DeclareDocumentCommand\dual{m g}{
	\left\langle#1,\IfNoValueTF{#2}{\phi}{#2}\right\rangle}
\DeclareDocumentCommand{\mean}{m}{\int #1 d \mu}
\DeclareDocumentCommand\Pq{o m g}{
	\mathbb{P}^\omega%
	\IfNoValueF{#1}{_{#1}}
	\left(%
	\IfNoValueF{#3}{\left.}%
	#2%
	\IfNoValueF{#3}{\:\right|\: #3}
	\right)}
\DeclareDocumentCommand\Eq{o m g}{
	\mathbb{E}^\omega%
	\IfNoValueF{#1}{_{#1}}
	\left[%
	\IfNoValueF{#3}{\left.}%
	#2%
	\IfNoValueF{#3}{\:\right|\: #3}
	\right]}
\newcommand{\Nm}{N_{3}}
\newcommand{\etalchar}[1]{$^{#1}$}
\providecommand{\bysame}{\leavevmode\hbox to3em{\hrulefill}\thinspace}
\providecommand{\MR}{\relax\ifhmode\unskip\space\fi MR }
\providecommand{\href}[2]{#2}
\begin{document}



\section*{Introduction}

Most species occupy a spatially extended habitat, where each individual produces some quantity of offspring which disperse around them.
If this dispersion is limited compared to the whole range of the population, spatial patterns of genetic diversity can build up over many generations, and individuals living far apart tend to be more genetically different than individuals living close to one another.

If sufficiently many individuals from different geographic locations are sampled and genotyped, one can estimate the parameters governing the dispersal and reproduction of individuals within the studied population \cite{rousset_genetic_1997,barton_inference_2013,ringbauer_inferring_2017}.
In particular, one is interested in the diffusion coefficient of genes inside the population, corresponding to the mean squared distance between parents and their offspring, and in the effective population density, which is the mean density of reproducing individuals per generation \cite{barton_neutral_2002}.
This estimation relies mainly on the comparison of the observed geographical decrease of genetic relatedness to theoretical models of isolation by distance \cite{wright_isolation_1943,malecot_les_1948,kimura_stepping_1964}.

To derive analytical formulae, these models often assume that the population density is uniform in space and constant in time \cite{sawyer_results_1976,sawyer_asymptotic_1977}.
Natural populations, however, are hardly likely to satisfy this assumption, either because of external conditions (\textit{e.g.} climate variations or fluctuating resources) or because of demographic fluctuations.
These heterogeneities are almost certain to affect the estimates of the demographic parameters, either increasing or decreasing the effective rate of diffusion and the effective population density.

In this paper, we investigate the effect of local heterogeneities in the population density on the large scale genetic composition of the population and on the corresponding demographic parameters.
We assume that the population density varies in space according to a translation invariant distribution and that it is constant in time.
In our model, we find that these heterogeneities lead to a slower diffusion of genes and a larger effective population density than in a population with uniform density.
According to this, significant local heterogeneities could lead to an underestimation of the migration levels and an overestimation of the population densities in natural populations.

This result relies on a study of the stepping stone model introduced by Kimura in 1953 \cite{kimura_stepping-stone_1953}.
In this model, the population is divided into subpopulations, or demes, sitting at the vertices of a graph (\textit{e.g.} $ \Z $ or $ \Z^2 $), and in each generation, new individuals are chosen at random to form the new generation in each deme, some of which are issued from parents within the same deme while others descend from individuals in neighbouring demes.
If the number of individuals in each deme is large enough and if the migration probabilities are small enough, the evolution of the frequency of a given genetic type can be described by a system of interacting diffusions solving
\begin{align} \label{stepping_stone_intro}
d p_t(x) = \sum_{y \sim x} m_{xy} (p_t(y) - p_t(x)) dt + \sqrt{\frac{1}{N(x)} p_t(x) (1-p_t(x))} d B^x_t,
\end{align}
where $ p_t(x) $ is the proportion of individuals carrying the type in deme $ x $ at time $ t $, $ m_{xy} $ is the probability that an individual in deme $ x $ is issued from one living in deme $ y $ in the previous generation, $ N(x) $ is number of individuals in deme $ x $ and $ ( B^x_t, t\geq 0, x \in \Z) $ is a family of independent Brownian motions \cite{etheridge_mathematical_2011,shiga_stepping_1988}.

Kimura and Weiss \cite{kimura_stepping_1964}, followed by Sawyer \cite{sawyer_results_1976} showed how the correlations in the genetic composition of demes decrease with the distance between them.
In particular, in a one-dimensional space, they showed that the correlation coefficient between the genetic composition of two demes separated by a distance $ r $ decreases like $ e^{- \lambda r} $, where $ \lambda > 0 $ depends on the parameters of the model.

This model can also be extended to a (one-dimensional) continuous geographical space in the following way \cite{shiga_stepping_1988,donnelly_continuum-sites_2000}.
For $ n \geq 1 $ and $ x \in \frac{1}{\sqrt{n}}\Z $, set
\begin{align} \label{rescaling_intro}
\bm{p}_n(t,x) = p_{nt}(\sqrt{n}x),
\end{align}
and assume $ m_{xy} = \frac{1}{2}m $ if $ \abs{x-y} = 1 $, $ (1-m) $ if $ x=y $ and $ 0 $ otherwise and that $ N(x) = \sqrt{n}/\gamma $ for all $ x \in \Z $ for some $ \gamma > 0 $.
Then, as $ n \to \infty $, the sequence of processes $ (\bm{p}_n(t,\cdot), t \in [0,T]) $ converges in distribution to $ (\bm{p}_t, t \in [0,T]) $, weak solution of
\begin{align} \label{SHE_intro}
\partial_t \bm{p}_t = \frac{\sigma^2}{2} \partial_{xx} \bm{p}_t + \sqrt{\gamma \bm{p}_t(1-\bm{p}_t)} \dot{W},
\end{align}
where $ \dot{W} $ is space-time white noise and $ \sigma^2 = m $.
A related convergence result was proved for the long range voter model in \cite{mueller_stochastic_1995}, and for the \slfv in \cite{etheridge_rescaling_2018}.
A corresponding convergence result regarding the underlying genealogies was also proved in \cite{greven_continuum_2016}.

To study the impact of local heterogeneities in the population size, we consider a stepping stone model where the population sizes are drawn at random from a translation invariant and ergodic distribution and remain fixed in time.
We then suppose that, with probability $ 1-m $, individuals in deme $ x $ descend from an individual in the same deme, and with probability $ m $, they descend from an individual chosen uniformly at random from the individuals living in demes $ \{x-1, x, x+1\} $.
The probability of having an ancestor in a neighbouring deme is thus proportional to the population size in this deme.
More precisely, an individual in deme $ x $ has its parent in deme $ y $ with probability
\begin{equation} \label{migration_probas}
\begin{aligned}
0 \hspace{6.2cm} & \text{ if } \abs{x-y} > 1, \\
m \frac{N(y)}{N(x-1) + N(x) + N(x+1)} \hspace{1.1cm} & \text{ if } \abs{x-y} = 1 \\
1 - m\frac{N(x-1) + N(x+1)}{N(x-1) + N(x) + N(x-1)} \hspace{0.5cm} & \text{ if } x=y.
\end{aligned}
\end{equation}
We then define (see Definition~\ref{def:stepping_stone_re} below) a system of interacting diffusions in a random environment analogous to \eqref{stepping_stone_intro} in this setting and we show that, if the population sizes are uniformly bounded away from 0 and infinity, the solution to this system can be rescaled as in \eqref{rescaling_intro} and converges as $ n \to \infty $ to a similar limit as in the uniform setting, \textit{i.e.} \eqref{SHE_intro}.
The two parameters $ \sigma^2 $ and $ \gamma $ of the limiting equation are expressed as functions of the distribution of the population sizes.
In particular, we find that (see Remark~\ref{remark:parameters_inequalities} below), with this particular choice of migration probabilities and under some technical assumptions,
\begin{align*}
\sigma^2 \leq \frac{2}{3} m, && \gamma \leq \frac{1}{\langle N \rangle},
\end{align*}
where $ \langle N \rangle $ denotes the average deme size and $ \frac{2}{3} m $ would be the expected diffusion coefficient if the population sizes were uniform.
As a result, local demographic heterogeneities appear to reduce the effective diffusion of genes within the population and to increase the effective population size.
This is due to the fact that, in our model, the ancestors of current individuals are more likely to have lived in more crowded demes, from which the diffusion is slower and in which the perceived population size is larger.

This bias should in particular be taken into account when estimating levels of gene flow and effective population sizes from large scale genetic patterns.
One should be careful, however, that these biases are a direct consequence of the choice made in \eqref{migration_probas}, and that different migration patterns might lead to different results (see Remark~\ref{remark:different_migration} below).

The proof of the main result relies on a duality relation between the stepping stone model \eqref{stepping_stone_intro} and a system of coalescing random walks which describes the genealogy of a random sample of individuals in the population \cite{etheridge_mathematical_2011,shiga_stepping_1988}.
In our model, this dual takes the form of a system of coalesing random walks in a random environment, whose jump and coalescence rates are both affected by the environment (the random walkers - or ancestral lineages - are more likely to jump to more crowded demes and coalesce more quickly if they meet in less crowded demes).

It turns out that these random walks admit a reversible measure on $ \Z $. This allows us to prove an invariance principle for the trajectory of each of these random walks \cite{kozlov_method_1985,de_masi_invariance_1989,lam_quenched_2014,derrien_local_2015}.
To characterize the coalescence time of a pair of lineages, we introduce an auxiliary process which records the sequence of demes where the two lineages meet, and we show that this process also admits a reversible measure (which happens to be the square of the previous one, see Proposition~\ref{prop:ergodicity_environment_two_particles}).
We thus show that the rescaled dual process converges to a system of Brownian motions which coalesce at a rate proportional to their local time together.
In turn, this system is known to be dual to a weak solution to equation \eqref{SHE_intro}, which allows us to uniquely characterise the possible limits of the rescaled stepping stone model.
We then conclude the proof of the main result by proving the tightness of this sequence.

Related models have been studied before, for example \cite{greven_interacting_2001} introduce a system of interacting Wright-Fisher diffusions in a space-time random environment and study its long time behaviour.
Moreover, in \cite{birkner_directed_2013}, the authors introduce a random walk on the
backbone of an oriented percolation cluster.
Given a realisation of a discrete-time contact process on $ \Z^d $, $ (\eta_n, n \in \Z) $, taking values in $ \lbrace 0, 1 \rbrace^{\Z^d} $, they consider a random walk describing the position of the ancestor of a particle at time 0.
This random walk $ (X_n, n \geq 0) $ satisfies
\begin{align*}
\P{X_{n+1} = y}{X_n = x, (\eta_n, n \in \Z)} \propto \eta_{-n-1}(y) \1{\| x - y \| = 1}.
\end{align*}
The authors then prove a law of large numbers and a quenched and annealed central limit theorem for this random walk in a dynamical random environment.
Their proof also covers the case were one puts (random) weights on the available sites in the contact process by replacing $ (\eta_n(x), x \in \Z^d, n \in \Z) $ with $ (\eta_n(x)K(x,n), x \in \Z^d, n \in \Z) $ with $ (K(x,n), x \in \Z^d, n \in \Z) $ \textit{i.i.d.} $ \N^* $-valued random variables.
The random field $ K $ can then be seen as a carrying capacity which fluctuates in space and time, superimposed on the contact process $ \eta $ which determines the availability of sites.
This was then generalised by K. Miller in \cite{miller_random_2016}, where $ K $ can be an arbitrary mixing field taking values in $ (0,\infty) $.
Finally, Birkner et al. \cite{birkner_random_2016} extended this to more general settings where $ \eta $ is a stationary Markovian particle system whose evolution can be described by ‘local rules.’

Our setting is simpler than in these works in that we consider an environment (the deme sizes) which is fixed in time, and uniformly elliptic (\textit{i.e.} bounded away from zero and infinity), but we are interested in the asymptotic behaviour of more than one ancestral lineage in the population.
For this we have to study the coalescence of pairs of random walks in a random environment.
This was done in a particular setting in \cite{birkner_coalescing_2018}, where the authors consider a family of random walks following the model in \cite{birkner_directed_2013} which coalesce whenever they find themselves in the same site.
They show that, adequately rescaled, the resulting process converges in distribution to the Brownian web (\textit{i.e.} a family of Brownian motions which coalesce instantly upon meeting, see for example \cite{fontes_brownian_2004} or \cite{schertzer_brownian_2015}).
Here, however, we explore the regime of so-called \textit{delayed coalescence}, where lineages must spend a positive amount of time together before they can coalesce (see Theorem~\ref{thm:cvg_brownian_flow}).
This adds an additional dependence on the law of the environment in the limiting process since the rate of coalescence depends on the local population density.

Another related work is \cite{greven_hierarchical_2018}, where a large population is divided in colonies labelled by the hierarchical group of order $ N $, and migration and reproduction can take place within each hierarchical block, and where the rates of these events for each block is random.

The paper is laid out as follows.
In the first section, we define the stepping stone model in a random environment and state our main result, namely the convergence of the sequence of rescaled stepping stone models to a weak solution to equation \eqref{SHE_intro}.
In Section~\ref{sec:RWRE}, we define the dual of the stepping stone model in a random environment and we state the related convergence results, first for individual lineages and then for the whole dual process.
These results concerning the dual are then proved in Section~\ref{sec:rescaling_dual}.
In Section~\ref{sec:proof_cvg_SHE}, we prove our main result, using estimates on the heat kernel associated to the random environment.
These estimates are proved in Appendix~\ref{appendix:heat_kernel_estimates}.
Appendix~\ref{sec:local_clt} is then devoted to the proof of a local central limit theorem for the random walk in a random environment which appears in the dual process.

\section*{Acknowledgements}

The author wishes to thank Nina Gantert for helpful discussions at an early stage of this project, and for bringing the work of \cite{derrien_local_2015} to his attention.
The author is also indebted to Alison Etheridge and Amandine Véber for their advice all along this project.
Finally, the author would like to thank an anonymous referee and an associate editor who carefully reviewed this paper and significantly helped to improve its presentation.

\section{The stepping stone model in a random environment}  \label{sec:stepping_stone_re}

\subsection{Definition of the model} \label{subsec:model}

We define a model for the evolution of a population living in discrete colonies or demes located on the one-dimensional integer lattice $ \Z $.
Moreover, we wish to draw the deme sizes (\textit{i.e.} the effective number of individuals living in each colony) at random from some translation invariant, ergodic distribution.

For $ x \in \Z $, let $ T^x $ be the translation operator acting on functions from $ \Z $ to $ \R $ defined by
\begin{align*}
T^xf(z) = f(x+z), \quad \forall z \in \Z.
\end{align*}
Thus let $ N = \{ N(x), x \in \Z \} $ be an $ \R^\Z $-valued random variable such that
\begin{enumerate}[i)]
	\item (translation invariance) for all $ x \in \Z $, $ T^x N \overset{d}{=} N $, where $ \overset{d}{=} $ stands for equality in distribution,
	\item (ergodicity) for all $ f : \R^\Z \to \R $ such that $ T^x f = f $ for all $ x \in \Z $, $ f(N) $ is deterministic (or almost surely constant),
	\item (uniform ellipticity) there exists $ K > 0 $ such that, almost surely, for all $ x \in \Z $, 
	\begin{align} \tag{$\mathcal{U.E.}$} \label{uniform_ellipticity}
	\frac{1}{K} \leq N(x) \leq K.
	\end{align}
\end{enumerate}
In words, we assume that the distribution of the population sizes is invariant by translation, that any translation invariant statistic is deterministic and that they are uniformly bounded away from 0 and infinity, almost surely.

Such a random variable can be defined using the usual formalism of random walks in random environments in the following way. Let $ (\Omega, \mathcal{B}, \mu) $ be a probability space on which is defined a family of measurable maps $ (T^x)_{x\in \Z} $, $ T^x : \Omega \to \Omega $ such that
\begin{enumerate}[a)]
	\item $ T^x \circ T^y = T^{x+y} $ for all $ x, y \in \Z $ and $ T^0 = Id_{\Omega} $,
	\item $ T^x $ is measure-preserving for all $ x \in \Z $, \textit{i.e.}
	\begin{align*}
	\mu(T^x A) = \mu(A) \quad \forall A \in \mathcal{B},
	\end{align*}
	\item the family $ (T^x)_{x\in\Z} $ is ergodic \wrt the measure $ \mu $, \textit{i.e.}
	\begin{align*}
	T^x A = A \quad \forall x \in \Z \implies \mu(A) \in \{0, 1 \}.
	\end{align*}
\end{enumerate}
Now let $ N : \Omega \to \R_+ $ be a random variable on $ \Omega $ such that there exists $ K >0 $ with
\begin{align*} 
\frac{1}{K} \leq N(\omega) \leq K \qquad \mu(d \omega) \text{-almost surely}.
\end{align*}
For $ x \in \Z $, we set
\begin{align*}
N(x) = N(\omega, x) = N(T^x \omega).
\end{align*}
Then $ \{ N(x), x \in \Z \} $ satisfies assumptions \textit{(i)-(iii)} above.
The parameter $ \omega \in \Omega $ determines the environment in which the population evolves, and the whole process will be defined on a larger (unspecified) probability space.
All our results will be \textit{quenched}, \textit{i.e.} they will hold conditionally on $ \omega $, for $ \mu $-almost every $ \omega $ in $ \Omega $.

\begin{remark}
	Note that $ N(x) $, $ N(\omega, x) $ and $ N(T^x \omega) $ all mean the same, and we sometimes change notations according to what is more practical.
	Most of the time we use $ N(\omega, x) $ if we want to make the dependence on $ \omega $ explicit, and $ N(x) $ if it can be omitted.
	The same applies to other random variables defined on the probability space $ (\Omega, \mathcal{B}, \mu) $.
\end{remark}

Conditionally on the deme sizes $ \{ N(x) , x \in \Z \} $, we define the stepping stone model in a random environment as follows.
Let $ \F (\Z, [0,1] ) $ denote the space of functions from $ \Z $ to $ [0,1] $.
For $ x \in \Z $, set
\begin{align*}
\Nm(x) = \Nm(\omega, x) = N(\omega, x-1) + N(\omega, x) + N(\omega, x+1).
\end{align*}

\begin{definition}[The stepping stone model in a random environment] \label{def:stepping_stone_re}
	Let $ (\Omega, \mathcal{B}, \mu) $, $ (T^x)_{x \in \Z} $ and $ N $ be as above.
	Fix $ p^0 \in \F(\Z, [0,1]) $, $ \lambda > 0 $ and $ m > 0 $.
	The stepping stone model in a random environment is defined as the $ \F(\Z, [0,1]) $-valued process $ (p_t(\omega,\cdot), t\geq 0) $ solving
	\begin{braceqn} \label{stepping_stone_equation}
		& d p_t(\omega, x) = m \sum_{z \in \{-1,1\}} \frac{N(\omega,x+z)}{\Nm(\omega,x)} \left( p_t(\omega, x+z) - p_t(\omega,x) \right) dt \\
		& \hspace{140pt} + \sqrt{\frac{1}{\lambda N(\omega, x)} p_t(\omega, x) (1-p_t(\omega,x)) } d B^x_t, \\
		& p_0(\omega,x) = p^0(x),
	\end{braceqn}
	for $ \mu $-almost every $ \omega $, where $ (B^x, x\in \Z) $ is a family of independent standard \Bm* (which is also independent from $ N $).
\end{definition}

In other words, we choose the population size of deme $ x $ to be $ \lambda N(x) $ and the probability that an individual in deme $ x $ at time $ t $ has a parent in deme $ x + z $ at time $ t - dt $ for $ z \in \{ -1, 1 \} $ is
\begin{align*}
m \frac{N(x + z)}{N_3(x)} dt + \littleO{dt}.
\end{align*}
This corresponds to the large population limit of an interacting Moran model where, with probability $ m $, the parent of an individual in deme $ x $ is drawn uniformly at random from the three populations $ \{ x-1, x, x+1 \} $, and with probability $ 1-m $, it is drawn uniformly from deme $ x $, see \cite{greven_representation_2005}.

\begin{remark}
	The fact that the process $ (p_t(\omega,\cdot), t \geq 0) $ is uniquely defined results from \cite{shiga_stepping_1988} (Section~2) where existence and uniqueness of the stepping stone model is proved in a fixed but arbitrary environment satisfying \eqref{uniform_ellipticity}.
	Furthermore we note that for each $ x \in \Z $, $ t \mapsto p_t(\omega,x) $ is almost surely continuous.
\end{remark}

We denote the quenched distribution of $ (p_t(\omega, \cdot), t\geq 0) $ with initial condition $ p^0 $ by $ \mathbb{P}_{p^0}^\omega $ (\textit{i.e.} the distribution of the process conditionally on the environment $ \omega $).
Expectation \wrt the quenched distribution will be denoted by $ \mathbb{E}_{p^0}^\omega $.

\subsection{Main result - rescaling limit of the stepping stone model in a random environment} \label{subsec:main_result}

Individuals are related when they share at least one common ancestor some time in the past.
If two individuals are sampled at a distance $ \sqrt{n} $ of each other, they need to look back at least $ n $ generations in the past to expect to have ancestors living in the same deme.
Over time scales of the order of $ n $ generations, individuals sampled from the same deme will have ancestors living in the same deme around $ \sqrt{n} $ generations.
Each time they do, they have a probability $ 1/N $ of having a common genealogical ancestor in the previous generation.
Hence if $ N $ is of the order of $ \sqrt{n} $, a positive proportion of individuals living in these demes should be related after a time of the order of $ n $ generations.

For this reason, we shall study the behaviour of $ (p_t(\omega,\cdot), t \geq 0) $ on spatial scales of the order of $ \sqrt{n} $ and at times of the order of $ n $ with $ \lambda = \sqrt{n} $ as $ n $ tends to infinity, for a fixed environment $ \omega $.
When the deme sizes are uniform, it is well known that the stepping stone model rescaled in this way converges in distribution to a weak solution of the stochastic heat equation with Wright-Fisher noise \eqref{SHE_intro} \cite{mueller_stochastic_1995,shiga_stepping_1988}.
Our main result states that on these spatial and temporal scales, the process forgets the details of the environment and evolves as an effective population with uniform demographic parameters.

For $ n \geq 1 $, let $ p^0_n \in \F(\Z, [0,1]) $ and set $ \lambda_n = \sqrt{n} $.
Let $ (p^{n}_t(\omega,\cdot), t\geq 0) $ be the solution of \eqref{stepping_stone_equation} with initial condition $ p^0_n $ and $ \lambda = \lambda_n $, and set, for $ n \geq 1 $,
\begin{align*}
\bm{p}^n_t(\omega,x) = p_{nt}^{n}(\omega, \sqrt{n} x), \quad x \in \frac{1}{\sqrt{n}} \Z.
\end{align*}
It is convenient to view $ (\bm{p}^n_t(\omega,\cdot), t \geq 0) $ as a process taking values in the space $ \Xi $ of Radon measures on $ \R $ through the identification
\begin{align*}
\bm{p}^n_t(\omega, dx) = \sum_{y \in \Z} p^{n}_{nt}(\omega, y) \frac{1}{\sqrt{n}} \delta_{y/\sqrt{n}}(dx).
\end{align*}

Let $ \mathcal{C}^\infty_c(\R) $ be the space of smooth and compactly supported real-valued functions on $ \R $.
For $ p \in \Xi $ and $ \phi \in \mathcal{C}^\infty_c(\R) $, set
\begin{align*}
\dual{p} = \int_{\R} \phi(x) p(dx).
\end{align*}
In this way, for any compactly supported function $ \phi : \R \to \R $,
\begin{align} \label{integral_pn}
\langle \bm{p}^n_t, \phi \rangle = \frac{1}{\sqrt{n}} \sum_{x \in \Z} p^n_{nt}(\omega,x) \phi \left( \dfrac{x}{\sqrt{n}} \right).
\end{align}
The space $ \Xi $ is endowed with the topology of vague convergence (a sequence of measures $ ( \nu_n )_n $ is said to converge \textit{vaguely} to $ \nu \in \Xi $ if $ \langle \nu_n, \phi \rangle \to \langle \nu, \phi \rangle $ for all $ \phi \in C^\infty_c(\R) $).
Let $ \seq{\phi} $ be a uniformly bounded separating family of $ \mathcal{C}^\infty
_c(\R) $.
Then
\begin{align*}
d(p,q) = \sum_{n \geq 1} \frac{1}{2^n} \abs{\dual{p}{\phi_n} - \dual{q}{\phi_n}}
\end{align*}
defines a metric for the vague topology on $ \Xi $.

Also let $ \mathcal{C}([0,T], \Xi) $ denote the space of continuous functions from $ [0,T] $ to $ (\Xi, d) $, endowed with the uniform topology.
The main result of this paper is the following.

\begin{theorem}[Convergence to the stochastic heat equation with Wright-Fisher noise] \label{thm:cvg-SHE-WF-noise}
	Fix $ T > 0 $ and suppose that $ (\Omega, \mathcal{B}, \mu) $, $ (T^x)_{x \in \Z} $, $ N $ satisfy \textit{(a-b-c)} and \eqref{uniform_ellipticity}.
	Assume that $ \bm{p}^n_0 $ is deterministic and converges vaguely to $ \bm{p}_0 \in \Xi $ and that $ \bm{p}_0 $ is absolutely continuous \wrt the Lebesgue measure.
	Further assume that $ p^n_0 $ satisfies the following uniform H\"older estimate:
	\begin{align} \label{holder_initial_condition}
	\sup_{n \geq 1} \sup_{x, y \in \Z} \sqrt{n} \frac{\abs{p^n_0(x) - p^n_0(y)}}{\abs{x-y}} < + \infty.
	\end{align}
	Then, $ \mu(d\omega) $-almost surely, as $ n \to \infty $, the sequence of $ \Xi $-valued processes $ (\bm{p}^n_t(\omega, \cdot), t \in [0,T]) $ converges in distribution in $ \mathcal{C}([0,T], \Xi) $ to a $ \Xi $-valued process $ (\bm{p}_t, t \in [0,T]) $ such that $ \bm{p}_t $ is absolutely continuous with respect to the Lebesgue measure for every $ t \geq 0 $ almost surely and such that, for any $ \phi \in \mathcal{C}^\infty_c(\R) $,
	\begin{align} \label{martingale_pb_SHE}
	\langle \bm{p}_t, \phi \rangle - \langle \bm{p}_0, \phi \rangle - \frac{\sigma^2}{2} \int_{0}^{t} \langle \bm{p}_s, \partial_{xx} \phi \rangle ds
	\end{align}
	is a continuous local martingale with quadratic variation
	\begin{align*}
	\gamma \int_{0}^{t} \langle \bm{p}_s (1-\bm{p}_s), \phi^2 \rangle ds,
	\end{align*}
	where this term is well defined since $ \bm{p}_s $ is absolutely continuous and where $ \sigma^2 $ and $ \gamma $ are given by
	\begin{align} \label{parameters}
	\sigma^2 = 2m \left( \mean{\frac{1}{N T^1 N}} \mean{N \Nm} \right)^{-1}, && \gamma = \frac{\mean{N (\Nm)^2}}{\mean{(N \Nm)^2}}.
	\end{align}
\end{theorem}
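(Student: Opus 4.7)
The plan is to combine tightness of $(\bm{p}^n)_{n \geq 1}$ in $\mathcal{C}([0,T], \Xi)$ with the identification of every subsequential limit via the scaling limit of the dual coalescing random walks in the random environment, which has been established in Section~\ref{sec:RWRE}. Applying It\^o's formula to \eqref{stepping_stone_equation} for each $\phi \in \mathcal{C}^\infty_c(\R)$ yields a continuous local martingale
\[
M^n_t(\phi) = \langle \bm{p}^n_t, \phi \rangle - \langle \bm{p}^n_0, \phi \rangle - \int_0^t \langle \bm{p}^n_s, A_n \phi \rangle\, ds,
\]
where $A_n$ is the rescaled discrete generator coming from \eqref{stepping_stone_equation} and $\langle M^n(\phi) \rangle_t$ involves the Wright--Fisher noise intensity rescaled by $\sqrt{n}/N(\cdot)$. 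I would use this decomposition only to establish tightness, and rely on moment duality for the identification step.

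For tightness, because \eqref{uniform_ellipticity} together with $p^n_t \in [0,1]$ implies that $\bm{p}^n_t$ has uniformly bounded mass on every compact set, the Jakubowski--Mitoma criterion reduces the problem to tightness in $\mathcal{C}([0,T])$ of $t \mapsto \langle \bm{p}^n_t, \phi \rangle$ for each $\phi \in \mathcal{C}^\infty_c(\R)$. Applying Aldous' criterion to the martingale decomposition above, I would bound $\langle M^n(\phi) \rangle_t$ directly using \eqref{uniform_ellipticity}, and control the drift integral $\int_0^t \langle \bm{p}^n_s, A_n \phi \rangle ds$ via the heat kernel estimates of Appendix~\ref{appendix:heat_kernel_estimates}, which provide the uniform bounds on $A_n \phi$ needed. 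Absolute continuity of any limit $\bm{p}_t$ with respect to Lebesgue measure, required for the quadratic variation term $\langle \bm{p}_s(1-\bm{p}_s), \phi^2 \rangle$ to be well defined, would follow from the smoothing of the limiting heat semigroup applied to the initial density (absolutely continuous by hypothesis) together with the H\"older estimate \eqref{holder_initial_condition}.

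For the identification step I would use moment duality. For any $k \geq 1$ and $x_1, \ldots, x_k \in \R$, the stepping stone model in the random environment $\omega$ is dual to the coalescing random walks defined in Section~\ref{sec:RWRE}, so that
\[
\mathbb{E}^\omega \Big[ \textstyle\prod_{i=1}^{k} \bm{p}^n_t(\omega, x_i) \Big] = \mathbb{E}^\omega \Big[ \textstyle\prod_{y \in \xi^n_{nt}} \bm{p}^n_0(\omega, y / \sqrt{n}) \Big],
\]
where $\xi^n$ is the dual process started from $\{\sqrt{n} x_1, \ldots, \sqrt{n} x_k\}$. The scaling limit of $\xi^n$ to a system of Brownian motions of diffusivity $\sigma^2$ which coalesce at a rate proportional, with constant $\gamma$, to their mutual local time (Section~\ref{sec:RWRE}), combined with the vague convergence of $\bm{p}^n_0$ and \eqref{holder_initial_condition}, allows passage to the limit in these identities. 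The limiting moments coincide with those of the weak solution of \eqref{SHE_intro} with parameters $\sigma^2$ and $\gamma$, characterised by the same moment duality with the limiting delayed-coalescing Brownian motions. Uniqueness of these moments for bounded processes closes the identification and, together with tightness, yields the claimed convergence in $\mathcal{C}([0,T], \Xi)$.

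The principal difficulty lies precisely in \emph{not} attempting to pass to the limit directly in the martingale problem. The rescaled generator $A_n \phi$ does not converge pointwise to $\tfrac{\sigma^2}{2} \partial_{xx} \phi$, since its coefficients $N(\cdot)/\Nm(\cdot)$ oscillate at scale $n^{-1/2}$ and a na\"ive Taylor expansion produces a leading term of order $\sqrt{n}$ that would have to be absorbed by a corrector built from the reversible measure $\pi(x) = N(x)\Nm(x)$ of the dual walk; only after such a homogenisation would the effective diffusivity $\sigma^2$ emerge. Similarly, the rescaled quadratic variation of $M^n(\phi)$ integrates na\"ively against $\mean{1/N} \neq \gamma$, and the correct constant $\gamma = \mean{N\Nm^2}/\mean{(N\Nm)^2}$ only surfaces after the delayed coalescence mechanism of the dual is taken into account. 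This is why the detour through the dual, rather than a direct martingale-problem argument, is central to the proof.
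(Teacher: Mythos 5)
Your identification step is exactly the paper's: use the quenched moment duality (Proposition~\ref{prop:duality}) together with Theorem~\ref{thm:cvg_brownian_flow} to pass to the limit in the moment identities, then appeal to the uniqueness result for the delayed-coalescing dual (Proposition~\ref{prop:duality_brownian_flow}). That part is sound, and you are also right that a direct passage to the limit in the martingale problem fails because the drift coefficients oscillate at scale $n^{-1/2}$.

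The gap is in the tightness argument, and it is a genuine one. You propose to apply Aldous' criterion to the martingale decomposition of $\langle \bm{p}^n_t, \phi\rangle$ itself and to ``control the drift integral $\int_0^t \langle \bm{p}^n_s, A_n\phi\rangle\,ds$ via the heat kernel estimates of Appendix~\ref{appendix:heat_kernel_estimates}.'' This does not work, and you in fact contradict it in your own last paragraph: after summation by parts, $A_n\phi(x/\sqrt{n}) = nm\sum_{z}j(\omega,x,z)\bigl(\phi((x+z)/\sqrt n)-\phi(x/\sqrt n)\bigr)$ has a Taylor term of order $\sqrt n$ proportional to $\phi'(x/\sqrt n)\sum_z z\, j(\omega,x,z)$, which is generically nonzero at each site. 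The heat kernel estimates of Appendix~\ref{appendix:heat_kernel_estimates} bound the kernel $g^\omega_t$ (and its spatial increments), and in the paper they are used only to prove the H\"older estimate of Lemma~\ref{lemma:holder_pn}; they say nothing about $A_n\phi$, and in particular cannot kill the $O(\sqrt n)$ oscillatory term. What actually makes the drift term order one in the paper is the introduction of the corrected observable
\[
\langle \tilde p^n_t,\phi\rangle = \frac{1}{\sqrt n}\sum_{x\in\Z}\pi(\omega,x)\,p^n_{nt}(\omega,x)\,\phi\!\left(\frac{F(\omega,x)}{c\sqrt n}\right),
\]
where $F$ is the harmonic coordinate of \eqref{definition_F}. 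Reversibility of $\pi$ throws the generator onto $\phi\circ (F/c\sqrt n)$, and the martingale identity \eqref{martingale_equation_F} cancels the first-order Taylor term, leaving the bounded expression \eqref{bound_finite_variation}. Tightness is then proved for $\tilde p^n$ via Aldous--Rebolledo, and transferred back to $\bm{p}^n$ by showing $\sup_{t\le T}|\langle \tilde p^n_t,\phi\rangle - \langle \bm{p}^n_t,\phi\rangle|\to 0$; \emph{this} is where Lemma~\ref{lemma:holder_pn} (hence the heat kernel estimates) and the ergodic averaging of $\pi$ (Lemma~\ref{lemma:uniform_cvg_pi}) actually enter. You name the corrector idea at the end of your proposal, but you do not build it into the tightness proof, and the substitute you offer (heat-kernel bounds on $A_n\phi$) would not close the argument.

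A smaller point: for absolute continuity of the limit the paper does not need any smoothing by the heat semigroup. Since $p^n_t(x)\in[0,1]$ for every site, \eqref{bound_pn} passes to the limit and gives $0\le\langle\bm p^\infty_t,\phi\rangle\le\int\phi$ for $\phi\ge 0$, so any limit point has a density bounded by $1$; this is built into Proposition~\ref{prop:duality_brownian_flow}. Your proposed route via semigroup smoothing is more elaborate than necessary and would require extra work to justify (the limit is only identified through its moments, so one cannot freely invoke a mild-solution representation before the identification is complete).
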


Theorem~\ref{thm:cvg-SHE-WF-noise} states that, over large spatial and temporal scales, the stepping stone model in a random environment behaves as if it were in a homogeneous environment with effective parameters $ \sigma^2 $ and $ \gamma $.
Furthermore, the explicit formulas in \eqref{parameters} yield the following inequalities.

\begin{remark} \label{remark:parameters_inequalities}
	\begin{enumerate}
		\item We have
		\begin{align*}
		\sigma^2 \leq \frac{2}{3} m,
		\end{align*}
		where $ \frac{2}{3}m $ would be the expected diffusion coefficient if $ N $ were constant.
		To see this, write, by Jensen's inequality and the Cauchy-Schwartz inequality,
		\begin{align*}
		\mean{\frac{1}{N T^1N}}\mean{N \Nm} \geq \frac{2 \mean{N T^1 N} + \mean{N^2}}{\mean{N T^1 N}} \geq 3.
		\end{align*}
		The local variations in the population density are thus seen to reduce the effective dispersion of genes in the population.
		\item In addition, if there exists a non-decreasing measurable function $ g : \R_+ \to \R_+ $ such that $ \E{N N_3^2}{N} = g(N) $ almost surely (which is the case for example if $ N $, $ T^1 N $ and $ T^{-1} N $ are independent), then Lemma~\ref{lemma:coupling} below implies
		\begin{align*}
		\gamma \leq \frac{1}{\mean{N}}.
		\end{align*}
		To see this, write
		\begin{align*}
		\frac{1}{\gamma} = \int_\Omega N(\omega) \frac{N(\omega) \Nm(\omega)^2}{\mean{N \Nm^2}} \mu(d\omega)
		\end{align*}
		and apply Lemma~\ref{lemma:coupling} with $ X = N $ and $ Y = N N_3^2 $.
		This shows that the effective population density in a heterogeneous population is larger than the mean population density.
		We shall see below that this is due to the fact that individuals are more likely to descend from more crowded regions, where the coalescence rate is lower and the perceived population density is larger.
	\end{enumerate}
\end{remark}

\begin{lemma} \label{lemma:coupling}
	Let $ X $ and $ Y $ be two $ L^1 $ random variables taking values in $ \R_+ $.
	Suppose that there exists a \textup{non-decreasing} measurable function $ g : \R_+ \to \R_+ $ such that $ \E{Y}{X} = g(X) $ almost surely.
	Then
	\begin{align*}
	\E{ X Y } \geq \E{X} \E{Y}.
	\end{align*}
\end{lemma}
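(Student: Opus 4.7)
The plan is to reduce the statement to the classical Chebyshev covariance inequality for two monotone functions of the \emph{same} random variable. By the tower property,
\[
\E{XY} = \E{X\,\E{Y\mid X}} = \E{X\,g(X)}, \qquad \E{Y} = \E{g(X)},
\]
so the claim becomes
\[
\E{X\,g(X)} \;\geq\; \E{X}\,\E{g(X)}.
\]
Since both the identity $x \mapsto x$ and the function $g$ are non-decreasing on $\R_+$, this is the standard ``positive correlation of monotone statistics'' inequality.

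Next, I would prove it via the textbook coupling trick: let $X'$ be an independent copy of $X$, defined on an enlarged probability space. Monotonicity of $x \mapsto x$ and of $g$ gives the pointwise bound
\[
(X - X')\bigl(g(X) - g(X')\bigr) \;\geq\; 0 \quad \text{almost surely},
\]
because both factors have the same sign (or at least one vanishes). Taking expectations and expanding, using that $X \stackrel{d}{=} X'$ and that $X$, $X'$ are independent,
\[
0 \;\leq\; \E\bigl[(X-X')(g(X)-g(X'))\bigr] \;=\; 2\,\E{X\,g(X)} - 2\,\E{X}\,\E{g(X)},
\]
which is exactly the desired inequality.

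The only subtlety is integrability: one needs $Xg(X)$ to be integrable for the algebraic manipulation above to make sense. If $\E{XY} = +\infty$, then since $\E{X}\E{Y} < +\infty$ (both $X$ and $Y$ are assumed to be in $L^1$), the inequality is trivial. Otherwise, $\E{Xg(X)} = \E{XY} < +\infty$, so both $Xg(X)$ and $X'g(X')$ are integrable; together with independence of $X$ and $X'$, this ensures that the cross terms $\E{X\,g(X')} = \E{X}\E{g(X)}$ and $\E{X'\,g(X)} = \E{X}\E{g(X)}$ are also finite, legitimating the expansion. No further obstacle arises: the argument is essentially a one-line coupling once the reduction to $\E{X g(X)} \geq \E{X}\E{g(X)}$ has been made.
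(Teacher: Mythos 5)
Your proof is correct, and it takes a genuinely different route from the paper's. You reduce, via the tower property, to $\E{Xg(X)} \geq \E{X}\E{g(X)}$ and then establish this via the classical Chebyshev correlation inequality: introduce an independent copy $X'$ of $X$, observe $(X-X')(g(X)-g(X')) \geq 0$ pointwise by monotonicity, take expectations and expand. The paper instead defines a size-biased random variable $\tilde{X}$ whose law satisfies $\mathbb{E}[f(\tilde{X})] = \E{f(X)Y}$, proves that its cumulative distribution function is dominated pointwise by that of $X$ (i.e.\ $\tilde{X}$ stochastically dominates $X$), and then uses the quantile coupling to get $\tilde{X} \geq X$ almost surely, whence $\E{XY} = \mathbb{E}[\tilde{X}] \geq \E{X}$. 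Your independent-copy argument is more elementary and shorter; the paper's argument proves the strictly stronger statement that the weighted law stochastically dominates the original, which is worth noting but is not needed for the inequality itself. Your integrability discussion is also handled correctly: the case $\E{XY} = +\infty$ is dispensed with trivially, and in the finite case all four terms in the expansion are finite, so the algebra is licit. No gaps.
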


Lemma~\ref{lemma:coupling} (which can be seen as a generalisation of a classical inequality for size-biased sampling) is proved in Appendix~\ref{appendix:coupling}.
Theorem~\ref{thm:cvg-SHE-WF-noise} is proved in Section~\ref{sec:proof_cvg_SHE} where we show that the sequence $ (\bm{p}^n_t(\omega, \cdot), t\geq 0) $ is tight in $ \mathcal{C}([0,T], \Xi) $ $ \mu(d\omega) $-almost surely and we identify the limit through a duality relation.

\begin{remark} \label{remark:different_migration}
	Alternative migration mechanisms can readily be considered instead of the one chosen in Definition~\ref{def:stepping_stone_re}.
	For example it also seems natural to assume that each individual tries to send $ \frac{m}{2} $ offspring to each neighbouring deme and $ (1-m) $ offspring to its current deme.
	Assuming exactly this would violate the assumption that the deme sizes are constant (unless they are all equal), but we can approach this situation for small values of $ m $ as follows.
	
	Suppose that an individual in deme $ x \pm 1 $ sends
	\begin{align*}
	\frac{\frac{m}{2} N(x)}{N(x-1)\frac{m}{2} + N(x+1)\frac{m}{2} + N(x)(1-m)}
	\end{align*}
	offspring to deme $ x $ and that an individual in deme $ x $ sends
	\begin{align*}
	\frac{N(x)(1-m)}{N(x-1)\frac{m}{2} + N(x+1)\frac{m}{2} + N(x)(1-m)}
	\end{align*}
	offspring to deme $ x $.
	Then the probability that an individual in deme $ x $ has its parent in deme $ y $ is
	\begin{equation*}
	\begin{aligned}
	0 \hspace{2.7cm} & \text{ if } \abs{x-y} > 1, \\
	\frac{m}{2} \frac{N(y)}{N_3(x)} \hspace{1.4cm} & \text{ if } \abs{x-y} = 1, \\
	(1 - m) \frac{N(x)}{N_3(x)} \hspace{0.5cm} & \text{ if } x=y,
	\end{aligned}
	\end{equation*}
	where we have set
	\begin{align} \label{alt_def_N_3}
	N_3(x) = N(x-1)\frac{m}{2} + N(x+1)\frac{m}{2} + N(x)(1-m).
	\end{align}
	
	It can then be verified that Theorem~\ref{thm:cvg-SHE-WF-noise} holds without modification if we replace $ m $ by $ \frac{m}{2} $ and we take \eqref{alt_def_N_3} as the definition of $ N_3(x) $.
	In particular, \eqref{parameters} still holds and we can see as in Remark~\ref{remark:parameters_inequalities} that
	\begin{align*}
	\sigma^2 \leq m,
	\end{align*}
	where $ m $ would be the expected diffusion coefficient if the deme sizes were all equal.
	Similarly, if there exists a non-decreasing measurable function $ g : \R_+ \to \R_+ $ such that $ \E{N N_3^2}{N} = g(N) $ almost surely, then
	\begin{align*}
	\gamma \leq \frac{1}{\mean{N}}.
	\end{align*}
	Hence in this setting the local heterogeneities have the same qualitative effect on the effective diffusion coefficient and the effective population density as in the previous case.
\end{remark}

\section{Coalescing random walks in a random environment} \label{sec:RWRE}

It is well known that the stepping stone model of Definition~\ref{def:stepping_stone_re} admits a moment dual in the form of a system of coalescing random walks \cite{shiga_stepping_1988}.
These random walks describe the positions of the ancestors of a random sample of individuals in the population.
Each pair of random walks (also called ancestral lineages) coalesces at the first time in the past when the two sampled individuals have a common ancestor.
The lineages are affected by the heterogeneity of the environment in two ways: they are more likely to jump to more crowded demes, and they coalesce more quickly in less crowded demes.

In this section, we first define the system of coalescing random walks in a random environment which is dual to the stepping stone model of Definition~\ref{def:stepping_stone_re}.
We then state several results on this dual which will be used to identify the limit in the proof of Theorem~\ref{thm:cvg-SHE-WF-noise}.
First we state the convergence of the rescaled random walk to Brownian motion (Theorem~\ref{thm:functional_clt}) and a local central limit theorem for this random walk (Theorem~\ref{thm:local_clt}).
Then we state the convergence of the whole dual process to a system of coalescing Brownian motions (Theorem~\ref{thm:cvg_brownian_flow}).

\subsection{The dual of the stepping stone model in a random environment} \label{subsec:dual}

\begin{definition}[The dual of the stepping stone model in a random environment] \label{def:dual}
	Fix $ m > 0 $, $ \lambda > 0 $.
	For $ \omega \in \Omega $, $ k \geq 1 $ and $ \{ x_1, \ldots, x_k \} \in \Z^k $, let
	\begin{align*}
	\mathcal{A}^\omega_t = \{ \xi^1_t, \ldots, \xi^{N_t}_t \}, \quad t\geq 0,
	\end{align*}
	be a system of random walks on $ \Z $ such that
	\begin{enumerate}[i)]
		\item $ \mathcal{A}^\omega_0 = \{ x_1, \ldots, x_k \} $,
		\item each lineage currently in $ x \in \Z $ jumps to $ y \in \{x-1, x+1\} $ at rate
		\begin{align} \label{transition_rates}
		m \frac{N(\omega, y)}{\Nm(\omega,x)},
		\end{align}
		independently of the others,
		\item each pair of lineages sitting in the same colony $ x \in \Z $ coalesces at rate
		\begin{align*}
		\frac{1}{\lambda N(\omega,x)},
		\end{align*}
		independently of other pairs.
	\end{enumerate}
	For $ t \geq 0 $, we denote the number of lineages in $ \mathcal{A}^\omega_t $ by $ N_t $.
\end{definition}

Let $ \mathbb{P}_{\{ x_1, \ldots, x_k \}}^\omega $ denote the quenched distribution of $ \proc{\mathcal{A}^\omega} $ started from $ \{ x_1, \ldots, x_k \} $, and let $ \mathbb{E}_{\{ x_1, \ldots, x_k \}}^\omega $ denote the expectation \wrt this probability.
When $ k=1 $, $ \mathbb{P}_{\{ x \}}^\omega = \mathbb{P}_{x}^\omega $ is the quenched distribution of a random walk in a random environment with transition rates given by \eqref{transition_rates}.
In the following, we show that this random walk satisfies a quenched invariance principle (Theorem~\ref{thm:functional_clt} below).

When $ k = 2 $, we can give a more precise construction of $ \proc{\mathcal{A}^\omega} $.
Fix $ \{ x_1, x_2 \} \in \Z^2 $ and let $ \proc{\xi^1} $ and $ \proc{\xi^2} $ be two \emph{independent} random walks on $ \Z $ with transition rates given by \eqref{transition_rates} and started from $ x_1 $ and $ x_2 $, respectively.
Let $ E $ be an independent exponential random variable with parameter 1 and define, for $ t \geq 0 $,
\begin{align} \label{weighted_local_time}
L(t) = \int_{0}^{t} \frac{\1{\xi^1_s = \xi^2_s}}{\lambda N(\omega, \xi^1_s)} ds.
\end{align}
Now define the coalescence time of the two lineages as
\begin{align*}
T_c = \inf \{ t \geq 0 : L(t) > E \}.
\end{align*}
Setting
\begin{equation*}
\mathcal{A}^\omega_t = \left\lbrace
\begin{aligned}
\{ \xi^1_t, \xi^2_t \} & \text{ if } t < T_c, \\
\{ \xi^1_t \} & \text{ if } t \geq T_c,
\end{aligned}
\right.
\end{equation*}
we obtain a version of the process in Definition~\ref{def:dual} started from two lineages.
This construction can be extended to more than two lineages \cite{liang_two_2009}, but we will not need it in such generality here.

The next proposition states the duality relation between the stepping stone model and $ \proc{\mathcal{A}^\omega} $.

\begin{proposition}[Duality, \cite{shiga_stepping_1988}] \label{prop:duality}
	For any $ p^0 : \Z \to [0,1] $ and for any $ k \geq 1 $, $ \{ x_1, \ldots, x_k \} \in \Z^k $,
	\begin{align*}
	\Eq[p^0]{\prod_{i=1}^{k} p_t(\omega, x_i) } = \Eq[\{ x_1, \ldots, x_k \}]{ \prod_{i=1}^{N_t} p^0(\xi^i_t) } \quad \mu(d\omega)-a.s.
	\end{align*}
	where $ (p_t(\omega, \cdot), t\geq 0) $ is given by Definition~\ref{def:stepping_stone_re} and $ \proc{\A^\omega} $ by Definition~\ref{def:dual}.
\end{proposition}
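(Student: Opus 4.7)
The plan is to verify the standard moment duality for interacting Wright-Fisher diffusions, adapted to the random environment. I would work conditionally on a fixed $\omega$ satisfying the uniform ellipticity condition \eqref{uniform_ellipticity}, a set of full $\mu$-measure; the quenched identity then follows for $\mu$-almost every $\omega$. The natural duality function is
\[
H(p,\vec{x}) := \prod_{i=1}^{k} p(x_i), \qquad p \in \mathcal{F}(\Z,[0,1]),\ \vec{x} = (x_1,\ldots,x_k) \in \bigcup_{k \geq 1}\Z^k.
\]
Given the pointwise identity of the two generators acting on $H$ (see below), the equality of expectations is standard: for $s \in [0,t]$ set $\phi(s) = \Eq[p^0]{\Eq[\vec{x}]{H(p_s,\, \mathcal{A}^\omega_{t-s})}}$, apply It\^o's formula in $s$ separately to the evolution of $p_s$ and of $\mathcal{A}^\omega_{t-s}$, and use the generator duality together with the fact that the martingale parts have zero mean to conclude $\phi'(s) \equiv 0$, hence $\phi(0) = \phi(t)$, which is precisely the claim. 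This is exactly the argument used in \cite{shiga_stepping_1988} for the case of a uniform environment, and no conceptual change is needed here.

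The heart of the matter is therefore the verification
\[
\mathcal{L}^\omega_{\mathrm{SSM}}\, H(\cdot,\vec{x})(p) \;=\; \mathcal{L}^\omega_{\mathrm{CRW}}\, H(p,\cdot)(\vec{x}),
\]
which reduces to a direct computation. Writing $\ell_x = \#\{i : x_i = x\}$ for $x \in \Z$, applying It\^o to $H(p_t,\vec{x})$ via \eqref{stepping_stone_equation} produces a drift contribution
\[
\sum_{x} \ell_x\, m \sum_{z \in \{-1,1\}} \frac{N(\omega,x+z)}{\Nm(\omega,x)} \bigl( p(x+z)-p(x) \bigr)\, p(x)^{\ell_x-1} \prod_{y \neq x} p(y)^{\ell_y}
\]
together with a second-order contribution
\[
\sum_{x} \binom{\ell_x}{2} \frac{1}{\lambda N(\omega,x)} \bigl( p(x)^{\ell_x-1} - p(x)^{\ell_x} \bigr) \prod_{y \neq x} p(y)^{\ell_y},
\]
the latter coming from $\tfrac{1}{2}\,\partial^2_{p(x)} H = \binom{\ell_x}{2}\, p(x)^{\ell_x-2} \prod_{y \neq x} p(y)^{\ell_y}$ multiplied by the quadratic variation coefficient $p(x)(1-p(x))/(\lambda N(\omega,x))$. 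On the dual side, $\mathcal{L}^\omega_{\mathrm{CRW}}$ acts on $\vec{x}$ through (i) each of the $\ell_x$ lineages at $x$ jumping to $x \pm 1$ at rate $m N(\omega,x \pm 1)/\Nm(\omega,x)$, which replaces one factor $p(x)$ by $p(x \pm 1)$, and (ii) each of the $\binom{\ell_x}{2}$ unordered pairs at $x$ coalescing at rate $1/(\lambda N(\omega,x))$, which removes one factor $p(x)$. Assembling these transitions yields term by term the same expression.

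The main, very mild, obstacle is the bookkeeping at sites occupied by several lineages, which is precisely what ties the Wright-Fisher noise coefficient $p(x)(1-p(x))/(\lambda N(\omega,x))$ to the pairwise coalescence rate $1/(\lambda N(\omega,x))$ through the combinatorial factor $\binom{\ell_x}{2}$. No quantitative input from the environment is needed beyond condition \eqref{uniform_ellipticity}, which suffices to guarantee boundedness of all coefficients in \eqref{stepping_stone_equation}, boundedness of the dual jump and coalescence rates, and boundedness of $H$ itself by $1$; thus It\^o's formula and the dominated convergence arguments in the standard duality scheme apply for every fixed $\omega$ in the ellipticity set, giving the quenched identity.
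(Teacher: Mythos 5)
Your proposal is correct and is precisely the standard moment-duality argument that the paper defers to by citing Shiga (1988): the generator duality for $H(p,\vec{x})=\prod_i p(x_i)$, followed by the It\^o/constancy-of-$\phi(s)$ scheme, with the random environment $\omega$ held fixed on the full-measure uniform-ellipticity set. The term-by-term matching of the drift to lineage jumps and of the Wright--Fisher second-order term to pairwise coalescence via $\binom{\ell_x}{2}$ is exactly the required bookkeeping, and no further idea is needed.
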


We use this proposition to characterise the limiting behaviour of $ (\bm{p}^n_t, t \in [0,T]) $ as $ n \to \infty $ via the study of the rescaling limit of $ (\A^\omega_t, t \in [0,T]) $.
We show below that a rescaled version of $ (\A^\omega_t, t \in [0,T]) $ converges in distribution to a system of independent \Bm* which coalesce at a rate proportional to the local time at 0 of their difference (Theorem~\ref{thm:cvg_brownian_flow} below).

\subsection{The central limit theorem for reversible random walks in a random environment} \label{subsec:clt_RWRE}

Here, we state the results on the motion of a single lineage in the dual process.
For $ \omega \in \Omega $, let $ \proc{\xi} $ be a random walk on $ \Z $ with transition rates given by \eqref{transition_rates}, \textit{i.e.}, conditionally on the environment $ \omega $, it jumps from $ x \in \Z $ to $ y \in \{ x-1, x+1 \} $ at rate
\begin{align*}
m \frac{N(\omega,y)}{\Nm(\omega, x)}.
\end{align*}
The most notable property of this random walk is that it admits a reversible measure on $ \Z $, given by
\begin{align} \label{reversible_measure}
\pi(\omega, x) = \pi(T^x \omega), && \pi(\omega) = \frac{N(\omega) \Nm(\omega)}{\mean{N \Nm}}.
\end{align}
(Note that we have normalised $ \pi $ so that $ \pi(\omega) \mu(d \omega) $ is a probability measure on $ \Omega $.)
Together with \eqref{uniform_ellipticity}, this implies the central limit theorem for the random walk \cite{lam_quenched_2014,derrien_local_2015,depauw_variance_2009}, \textit{i.e.}
\begin{align*}
\frac{1}{\sqrt{t}} \xi_t \cvgas[d]{t} \mathcal{N}(0,\sigma^2) \quad \mu(d\omega)-a.s.
\end{align*}
with $ \sigma^2 $ as in \eqref{parameters}.
In Section~\ref{sec:rescaling_dual}, we prove that this extends to a quenched invariance principle for the random walk $ (\xi_t, t \geq 0) $.
For $ n \geq 1 $, set
\begin{align*}
\xi^n_t = \frac{1}{\sqrt{n}} \xi_{nt}.
\end{align*}
For $ T > 0 $, let $ \sko{\R} $ denote the space of \cadlag real-valued functions endowed with the usual Skorokhod topology.

\begin{theorem}[Functional central limit theorem for nearest neighbour reversible random walks] \label{thm:functional_clt}
	Fix $ T > 0 $ and assume that $ \xi^n_0 $ is deterministic and converges to $ x_0 \in \R $.
	Suppose that $ (\Omega, \mathcal{B}, \mu) $, $ (T^x)_{x \in \Z} $ and $ N $ satify (\textit{a-b-c}) and \eqref{uniform_ellipticity}.
	Then, for $ \mu $-almost every environment $ \omega $, as $ n \to \infty $, $ (\xi^n_t)_{t \in [0,T]} $ converges in distribution in $ \sko{\R} $ to \Bm started from $ x_0 $ with variance $ \sigma^2 $ given by~\eqref{parameters}.
\end{theorem}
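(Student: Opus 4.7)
The plan is to use the classical corrector method, which in this one-dimensional nearest-neighbour reversible setting admits a fully explicit implementation. A function $f : \Z \to \R$ is harmonic for the generator of $(\xi_t)$ if and only if $N(\omega,x+1)(f(x+1)-f(x)) = N(\omega,x-1)(f(x)-f(x-1))$, which yields a one-parameter family of harmonic functions generated by
\begin{equation*}
\phi(\omega,x) = c_\star \sum_{k=0}^{x-1}\frac{1}{N(\omega,k) N(\omega,k+1)}, \qquad x\ge 0,
\end{equation*}
extended symmetrically for $x<0$, where $c_\star := 1/\mean{1/(N\,T^1 N)}$. Birkhoff's ergodic theorem applied to the shifts $(T^x)$ under $\mu$, combined with uniform ellipticity, then yields $\phi(\omega,x)/x \to 1$ $\mu$-almost surely as $|x|\to \infty$, so the corrector $\chi(\omega,x) := \phi(\omega,x)-x$ is sublinear in $x$.

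Since $L\phi \equiv 0$, the process $M_t := \phi(\omega,\xi_t)$ is a quenched martingale, whose predictable quadratic variation is given by the carré du champ
\begin{equation*}
\langle M \rangle_t = \int_0^t F(T^{\xi_s}\omega)\, ds, \quad F(\omega) := \frac{m\, c_\star^2}{N(\omega)^2\,\Nm(\omega)}\left(\frac{1}{N(T^1\omega)} + \frac{1}{N(T^{-1}\omega)}\right),
\end{equation*}
where I use the translation identity $\phi(\omega,x+z)-\phi(\omega,x)=\phi(T^x\omega,z)$. The environment seen from the particle, $(T^{\xi_s}\omega)_{s\ge 0}$, is a Markov process on $\Omega$ which is reversible with respect to $\pi(\omega)\, d\mu$, and ergodic (by uniform ellipticity plus ergodicity of $(T^x)_{x \in \Z}$). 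The $L^1$ ergodic theorem then gives $n^{-1}\langle M \rangle_{nt} \to t\int F\,\pi\, d\mu$, and substituting $\pi(\omega)=N(\omega)\Nm(\omega)/\mean{N\Nm}$ and using translation invariance reduces this integral to $2m/\bigl(\mean{1/(N T^1 N)}\,\mean{N\Nm}\bigr) = \sigma^2$.

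Finally, I would use the telescoping decomposition
\begin{equation*}
\xi^n_t = \xi^n_0 + \frac{1}{\sqrt{n}}\bigl(M_{nt} - M_0\bigr) - \frac{1}{\sqrt{n}}\chi(\omega,\xi_{nt}) + \frac{1}{\sqrt{n}}\chi(\omega,\xi_0),
\end{equation*}
and apply Rebolledo's martingale functional central limit theorem to $M^n_t := n^{-1/2}(M_{nt}-M_0)$: its jumps are uniformly of size $O(n^{-1/2})$ thanks to uniform ellipticity, killing the Lindeberg condition, and the bracket convergence above identifies the limit as Brownian motion of variance $\sigma^2$. The main obstacle is to promote the \emph{pointwise} sublinearity $\chi(\omega,x)=o(|x|)$ provided by the ergodic theorem into the \emph{uniform} quenched control
\begin{equation*}
\frac{1}{\sqrt{n}}\sup_{t \in [0,T]} \bigl|\chi(\omega,\xi_{nt})\bigr| \longrightarrow 0.
\end{equation*}
This I would handle by coupling the pointwise bound with an a priori tightness estimate on $\sup_{t\le T}|\xi_{nt}|/\sqrt n$ (available, for instance, from uniform ellipticity via standard hitting-time bounds, in the spirit of \cite{lam_quenched_2014,derrien_local_2015}); Slutsky's lemma then closes the argument.
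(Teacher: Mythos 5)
Your proposal follows the same backbone as the paper's proof: you construct the harmonic function $\phi(\omega,x)=c_\star F(\omega,x)$ (the paper's $F$ up to the normalising constant $c_\star=1/\mean{1/(N\,T^1N)}$), observe that $M_t=\phi(\omega,\xi_t)$ is a quenched martingale, use the ergodicity of the environment seen from the particle with invariant measure $\pi(\omega)\mu(d\omega)$ to get $n^{-1}\langle M\rangle_{nt}\to\sigma^2 t$, and close with Rebolledo's martingale FCLT. The bracket computation and the identification of $\sigma^2$ agree exactly with the paper's (your integrand $F=c_\star^2 h$ and $\int F\pi\,d\mu = c_\star^2\mean{h\pi} = \sigma^2$).

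The one place where you diverge is the control of the corrector $\chi(\omega,\xi_{nt})/\sqrt n$ uniformly over $t\in[0,T]$. You propose to combine pointwise sublinearity of $\chi$ with an \emph{a priori} tightness bound on $\sup_{t\le T}|\xi_{nt}|/\sqrt n$ obtained from hitting-time estimates; this is a legitimate route, but you leave that tightness estimate as a citation. The paper instead dispenses with any such external input by bounding the remainder directly in terms of the martingale: from the ergodic limit $F(\omega,x)/x\to c$ it derives, for every $\varepsilon>0$ and some $\omega$-dependent constants $M_\varepsilon,H_\varepsilon$,
\begin{equation*}
\left|\frac{F(\omega,x)}{c}-x\right|\le \frac{\varepsilon\,|F(\omega,x)|}{c(c-\varepsilon)}\wedge(M_\varepsilon+H_\varepsilon),
\end{equation*}
hence $R^\omega_n(t)^2\le\frac{\varepsilon^2}{c^2(c-\varepsilon)^2}M^\omega_n(t)^2+\frac{(M_\varepsilon+H_\varepsilon)^2}{n}$, and then a single application of Doob's $L^2$ inequality to the martingale $M^\omega_n$ (whose bracket is bounded by uniform ellipticity) finishes the job. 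This is self-contained, avoids the mild circularity of first establishing path tightness for $\xi$, and is the one lemma you would genuinely need to add to make your sketch airtight. Everything else in your proposal matches the paper step for step.
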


The corresponding central limit theorem was proved for the random walk among random conductances in \cite{lam_quenched_2014} (see also \cite{lam_les_2012}).
Although our case doesn't take the form of a random walk among random conductances, the proof in \citep[Chapter~4]{lam_les_2012} only requires the reversibility of the random walk, and can be applied by replacing the conductivity with $ N(\omega)T^1N(\omega) $ and the reversible measure with $ N(\omega) \Nm(\omega) $.
We give the details in Section~\ref{sec:rescaling_dual}.

In \cite{derrien_local_2015}, Derrien proves a local central limit theorem for the random walk among random conductances which also extends to our setting.
We prove a slightly stronger version of his result, namely Theorem~\ref{thm:local_clt} below.
For $ \omega\in \Omega $, $ t \geq 0 $ and $ x, y \in \Z $, set
\begin{align} \label{def_g_omega}
g^\omega_t(x,y) = \Pq[x]{\xi_t = y}
\end{align}
and for $ t > 0 $, $ x \in \R $, define
\begin{align*}
G_t(x) = \frac{1}{\sqrt{2\pi \sigma^2 t}} \exp \left( - \frac{x^2}{2\sigma^2 t} \right).
\end{align*}

\begin{theorem}[Local central limit theorem for reversible random walks] \label{thm:local_clt}
	For all $ 0 < \varepsilon < T $ and $ R > 0 $ and for $ \mu $-almost every environment $ \omega $,
	\begin{align*}
	\lim_{n \to \infty} \: \sup_{t \in [\varepsilon, T]} \: \max_{\substack{x \in B(0,R\sqrt{n}) \cap \Z \\ y \in B(0, R \sqrt{n}) \cap \Z}} \: \abs{ \sqrt{n} \frac{g^\omega_{nt}(x,y)}{\pi(\omega,y)} - G_t \left( \frac{x-y}{\sqrt{n}} \right) } = 0.
	\end{align*}
\end{theorem}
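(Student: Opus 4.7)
The approach, following \cite{derrien_local_2015}, combines the functional invariance principle of Theorem~\ref{thm:functional_clt} with regularity estimates for the quenched heat kernel $g^\omega_t$ established in Appendix~\ref{appendix:heat_kernel_estimates}. Write $\tilde g^\omega_t(x,y) := g^\omega_t(x,y)/\pi(\omega,y)$ for the transition density with respect to the reversible measure; this is the natural object to study because the factor $\pi(\omega,y)$ fluctuates on the microscopic scale, whereas $\sqrt n\,\tilde g^\omega_{nt}(x,y)$ is expected to converge smoothly to the Gaussian kernel $G_t$. The plan is to prove the pointwise version of the claim via a smoothing argument, then promote it to a uniform statement through an equicontinuity argument.

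Fix a smooth nonnegative bump $\rho$ on $\R$ with compact support and $\int_\R \rho(z)\,dz = 1$, and for $\eta > 0$ set $\rho_\eta(z) = \eta^{-1}\rho(z/\eta)$. For $x \in \Z$ with $x/\sqrt n \to x_\infty$ and $a \in [-R,R]$, the functional CLT of Theorem~\ref{thm:functional_clt} applied to the bounded continuous test function $\rho_\eta(\cdot - a)$ yields
\begin{equation*}
\sum_{y \in \Z} g^\omega_{nt}(x,y)\,\rho_\eta\!\left(\frac{y}{\sqrt n} - a\right) = \mathbb{E}^\omega_x\!\left[\rho_\eta\!\left(\frac{\xi_{nt}}{\sqrt n} - a\right)\right] \longrightarrow \int_\R G_t(w - x_\infty)\,\rho_\eta(w - a)\,dw
\end{equation*}
as $n \to \infty$, and the right-hand side tends to $G_t(a - x_\infty)$ as $\eta \to 0$ by continuity of $G_t$. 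On the other hand, rewrite the left-hand side as $\sum_y \tilde g^\omega_{nt}(x,y)\,\pi(\omega,y)\,\rho_\eta(y/\sqrt n - a)$. The parabolic Hölder estimate of Appendix~\ref{appendix:heat_kernel_estimates} provides deterministic constants $C, \alpha > 0$, depending only on $m$ and $K$, such that for $t \in [\varepsilon, T]$ and $|y - y'| \leq \eta \sqrt n$,
\begin{equation*}
\left| \tilde g^\omega_{nt}(x,y) - \tilde g^\omega_{nt}(x,y') \right| \leq \frac{C}{\sqrt n}\,\eta^\alpha,
\end{equation*}
which allows $\tilde g^\omega_{nt}(x,y)$ to be replaced by $\tilde g^\omega_{nt}(x,\lfloor a \sqrt n \rfloor)$ inside the sum at the cost of an error of order $\eta^\alpha$. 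Finally, Birkhoff's ergodic theorem applied to the stationary sequence $(\pi(T^y \omega))_{y \in \Z}$, together with the normalisation $\mean{\pi} = 1$ from~\eqref{reversible_measure}, yields, $\mu$-almost surely,
\begin{equation*}
\frac{1}{\sqrt n} \sum_{y \in \Z} \pi(\omega,y)\,\rho_\eta\!\left(\frac{y}{\sqrt n} - a\right) \longrightarrow \int_\R \rho_\eta(w - a)\,dw = 1.
\end{equation*}
Combining these facts and letting first $n \to \infty$ and then $\eta \to 0$ yields $\sqrt n\,\tilde g^\omega_{nt}(x, \lfloor a \sqrt n \rfloor) \to G_t(a - x_\infty)$, which is the pointwise form of the desired convergence.

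To upgrade this to the uniform statement of Theorem~\ref{thm:local_clt}, I would combine the Gaussian upper bound $\sqrt n\,\tilde g^\omega_{nt}(x,y) \leq C/\sqrt t$ with a joint parabolic Hölder regularity estimate in $(t,x,y)$, both provided by Appendix~\ref{appendix:heat_kernel_estimates}. These imply that the family $(t,a,b) \mapsto \sqrt n\,\tilde g^\omega_{nt}(\lfloor a\sqrt n \rfloor, \lfloor b\sqrt n \rfloor)$ is uniformly bounded and equicontinuous on $[\varepsilon, T] \times [-R, R]^2$, so an Arzelà--Ascoli argument, together with pointwise convergence on a countable dense subset and the continuity of $G_t$, promotes the pointwise convergence to the uniform one claimed in the theorem. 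The main obstacle in this program is the parabolic Hölder estimate for $\tilde g^\omega$: once it is in hand, both the pointwise extraction from the CLT and the equicontinuity follow rather mechanically, and the ergodicity of the environment plays only a supporting role through the Wiener averaging of $\pi$. Since the walk is nearest-neighbour on $\Z$ and uniformly elliptic by~\eqref{uniform_ellipticity}, such regularity should be accessible by Nash--Aronson or Delmotte-type arguments, and this is where the bulk of the analytic work in Appendix~\ref{appendix:heat_kernel_estimates} lies.
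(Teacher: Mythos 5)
Your mollification-plus-regularity strategy is essentially the paper's strategy: the paper also averages the kernel over a small spatial window (using a sharp indicator of a ball of radius $\delta\sqrt n$ rather than a smooth bump $\rho_\eta$, a cosmetic difference), controls the resulting error via the spatial H\"older estimate of Lemma~\ref{lemma:continuity_h}, uses the functional CLT to match the smoothed quantity against the Gaussian, and invokes the ergodic averaging of $\pi$ (Lemma~\ref{lemma:uniform_cvg_pi}, which is Birkhoff plus an Arzel\`a--Ascoli step to make the convergence uniform in the base point). Your pointwise argument is therefore sound and relies only on ingredients actually proved in Appendix~\ref{appendix:heat_kernel_estimates}.

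The gap is in the passage from pointwise to uniform convergence. You assert that a \emph{joint} parabolic H\"older estimate in $(t,x,y)$ for $\tilde g^\omega_t = h^\omega_t$ is ``provided by Appendix~\ref{appendix:heat_kernel_estimates},'' but it is not. Lemma~\ref{lemma:bound_h} gives the on-diagonal bound $\sqrt t\,h^\omega_t(x,y)\le C$, and Lemma~\ref{lemma:continuity_h} gives H\"older continuity in the \emph{spatial} variable (and, by reversibility $h^\omega_t(x,y)=h^\omega_t(y,x)$, in both $x$ and $y$), but no temporal modulus of continuity for $h^\omega$ is established anywhere in the paper, so the equicontinuity in $t$ needed for your Arzel\`a--Ascoli argument is missing. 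The paper avoids proving temporal regularity of the kernel by working at the level of the cumulative distribution functions $F^\omega_n(t,x,y)=\Pq[\lfloor\sqrt n x\rfloor]{\xi_{nt}/\sqrt n\le y}$: the functional CLT (with Fatou) gives convergence of $F^\omega_n(\cdot,x,y)$ to $F(\cdot,x,y)$ uniformly in $t\in[0,T]$ for each fixed $(x,y)$, and then monotonicity of $F^\omega_n$ in $x$ and $y$ together with continuity of the limit $F$ yields, by a P\'olya--Dini argument, uniform convergence on compacts of $\R_+\times\R^2$ — no temporal modulus of continuity of $h^\omega$ is ever required. You could repair your version along the same lines: your CLT step, if you use Theorem~\ref{thm:functional_clt} rather than the fixed-time CLT, already gives convergence uniform in $t\in[0,T]$ for each fixed $(a,\eta)$, and combining that uniform-in-$t$ convergence on a countable dense set of $a$ with the \emph{spatial} equicontinuity from Lemma~\ref{lemma:continuity_h} (which is itself uniform in $t\ge\varepsilon$) and continuity of $G_t$ suffices for the uniform statement. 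But as written, the appeal to an unproved joint parabolic H\"older estimate is a genuine hole, and a Nash--Aronson type argument to supply it would be extra work the paper deliberately sidesteps.
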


In Section~\ref{sec:proof_cvg_SHE}, we shall use several estimates on the kernel $ g^\omega $ to show the tightness of the sequence $ ( \bm{p}^n_t(\cdot), t\geq 0) $.
These estimates are proved in Appendix~\ref{appendix:heat_kernel_estimates} and are then used to prove Theorem~\ref{thm:local_clt} in Appendix~\ref{sec:local_clt}.

\subsection{Delayed coalescence for random walks in a random environment} \label{subsec:delayed_coalescence}

Now that we know how each lineage behaves over large scales, we state the corresponding result for the dual process $ \proc{\A^\omega} $.
We limit ourselves to the dual started from two lineages, as this is enough to identify the limit of $ (\bm{p}^n_t(\omega,\cdot), t\geq 0) $ in the proof of Theorem~\ref{thm:cvg-SHE-WF-noise}.

Let us start with the definition of the limiting process.
It will be defined as a system of independent Brownian motions which coalesce at a rate proportional to their local time together.
For $ \sigma^2 > 0 $ and $ \gamma > 0 $, let $ \proc{X^1} $ and $ \proc{X^2} $ be two independent \Bm* with variance $ \sigma^2 $, started from $ x_1 $ and $ x_2 $, respectively.
Let $ E $ be an independent exponential random variable with parameter $ \gamma $, and let $ t \mapsto L^0_t(X^1-X^2) $ denote the local time at 0 of $ X^1 - X^2 $. Set
\begin{align*}
T_c = \inf \{ t \geq 0 : L^0_t(X^1-X^2) > E \}.
\end{align*}

\begin{definition}[Brownian flow with delayed coalescence, \cite{liang_two_2009}] \label{def:brownian_flow}
	The process $ \proc{\mathcal{D}} $ defined by
	\begin{equation*}
	\mathcal{D}_t = \left\lbrace
	\begin{aligned}
	\{ X^1_t, X^2_t \} & \text{ if } t < T_c, \\
	\{ X^1_t \} & \text{ if } t \geq T_c,
	\end{aligned}
	\right.
	\end{equation*}
	is called the Brownian flow with delayed coalescence with parameters $ ( \sigma^2, \gamma ) $.
\end{definition}

The name \textit{Brownian flow with delayed coalescence} is used in \cite{liang_two_2009} for a more general process started from an arbitrary number of lineages (which also coalesce at a rate proportional to their local time together). Here we only use the process started from two lineages.

\begin{remark}
	The process $ (\mathcal{D}_t, t \geq 0) $ takes values in the disjoint union of $ \R $ and $ \R^2 $, and we let $ \mathrm{D}([0,T],\R \cup \R^2) $ denote the space of \cadlag functions on $ [0,T] $ taking values in this space, endowed with the usual Skorokhod topology.
\end{remark}

The following result is proved in \cite{liang_two_2009} (see in particular Theorem~6.2 and Proposition~7.2).

\begin{proposition} \label{prop:duality_brownian_flow}
	There exists a unique Feller Markov process $ (\bm{p}_t)_{t \geq 0} $ with continuous sample paths taking values in the subspace
	$ \left\lbrace p \in \Xi : 0 \leq \langle p, \phi \rangle \leq \int_\R \phi(x) dx, \forall \phi \geq 0 \right\rbrace $
	such that, for $ k \in \lbrace 1, 2 \rbrace $, for any $ \phi_1, \phi_k $ in $ \mathcal{C}^\infty_c(\R) $,
	\begin{align} \label{duality_brownian_flow}
	\E[\bm{p}_0]{\prod_{i=1}^{k} \langle \bm{p}_t, \phi_i \rangle } = \int_{\R^k} \E[\lbrace x_1, x_k \rbrace]{\prod_{i=1}^{N_t} \bm{p}_0(X^i_t) } \phi_1(x_1)  \phi_k(x_k) dx_1 dx_k,
	\end{align}
	where $ \mathcal{D}_t = \{ X^1_t, X_t^{N_t} \} $ is the Brownian flow with delayed coalescence.
	In addition, for any twice continuously differentiable function $ f : \R \to \R $ and any $ \phi \in \mathcal{C}^\infty_c(\R) $,
	\begin{align*}
	f(\langle \bm{p}_t, \phi \rangle) - \int_{0}^{t} \left( \dfrac{\sigma^2}{2} \langle \bm{p}_s, \Delta \phi \rangle f'(\langle \bm{p}_s, \phi \rangle) + \frac{\gamma}{2} \langle \bm{p}_s(1-\bm{p}_s), \phi^2 \rangle f''(\langle \bm{p}_s, \phi \rangle) \right) ds
	\end{align*}
	is a local martingale with respect to the natural filtration of $ (\bm{p}_t)_{t \geq 0} $.
\end{proposition}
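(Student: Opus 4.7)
The plan is to realize $(\bm{p}_t)_{t\geq 0}$ as the unique-in-law weak solution to the stochastic heat equation with Wright--Fisher noise
\begin{equation*}
\partial_t \bm{p}_t = \frac{\sigma^2}{2} \partial_{xx} \bm{p}_t + \sqrt{\gamma\, \bm{p}_t(1-\bm{p}_t)}\, \dot{W},
\end{equation*}
and to read off both the duality relation and the martingale problem from this characterization. Existence of a solution with continuous sample paths taking values in the prescribed subspace is classical: one proceeds via Shiga's approximation by interacting Wright--Fisher diffusions on $\frac{1}{\sqrt{n}}\Z$ (the same construction as for the uniform stepping stone model), or directly by Dawson-style martingale problem arguments. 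The bound $0 \leq \bm{p}_t \leq 1$ is preserved in the limit because the noise coefficient $\sqrt{p(1-p)}$ vanishes at $0$ and $1$, and vague limits of $[0,1]$-valued densities satisfy $0 \leq \langle p, \phi \rangle \leq \int \phi(x)\, dx$ for every $\phi \geq 0$.

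The heart of the proof is uniqueness via moment duality. One extends Definition~\ref{def:brownian_flow} to a $k$-particle Brownian flow with delayed coalescence $(\mathcal{D}^{(k)}_t)_{t \geq 0}$ for every $k \geq 1$, letting any pair of surviving lineages $(X^i, X^j)$ coalesce at rate $\gamma$ times the local time at $0$ of $X^i - X^j$; this is a direct extension of the two-particle construction given just above the statement. For any solution $(\bm{p}_t)$ of the martingale problem, one computes $\frac{d}{dt}\E[\prod_{i=1}^{N_t} \bm{p}_t(X^i_t)]$ by It\^o's formula: the Laplacian terms from the Brownian motions cancel the drift $\frac{\sigma^2}{2}\langle \bm{p}_s, \Delta\phi\rangle$ coming from the SPDE, while the quadratic variation $\gamma \bm{p}_s(1-\bm{p}_s)$ of the Wright--Fisher noise, combined with the identity $\bm{p}(1-\bm{p}) = \bm{p} - \bm{p}^2$, exactly compensates the coalescence rate in the dual. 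This yields~\eqref{duality_brownian_flow} for all $k \geq 1$, and since $\bm{p}_t$ takes values in a uniformly bounded set of densities, the moments $\E[\prod \langle \bm{p}_t, \phi_i \rangle]$ determine the law of $(\langle \bm{p}_t, \phi_1\rangle, \ldots, \langle \bm{p}_t, \phi_k\rangle)$ by a Stone--Weierstrass argument; hence the finite-dimensional distributions, and so the law of the Markov process, are uniquely determined. The Feller property follows from continuous dependence of the SPDE on its initial condition together with this uniqueness.

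The martingale identity in the statement is then an immediate consequence of It\^o's formula applied to $f(\langle \bm{p}_t, \phi\rangle)$: the drift term $\frac{\sigma^2}{2}\langle \bm{p}_s, \Delta\phi \rangle f'(\langle \bm{p}_s, \phi\rangle)$ comes from integrating the Laplacian against $\phi$, and $\frac{\gamma}{2}\langle \bm{p}_s(1-\bm{p}_s), \phi^2\rangle f''(\langle \bm{p}_s, \phi\rangle)$ comes from the quadratic variation of the Wright--Fisher noise tested against $\phi$. The main obstacle I anticipate is the rigorous treatment of the coalescence mechanism in the duality computation: the rate is driven by the local time at $0$ of $X^i - X^j$, which is a singular object. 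The standard remedy is to regularize by replacing the characteristic function of $\{X^i = X^j\}$ by a mollifier $\rho_\varepsilon(X^i - X^j)$, carry out the It\^o calculation for the smooth approximation, and then pass to the limit $\varepsilon \to 0$ using uniform continuity of Brownian local time and dominated convergence; equivalently, one can invoke Tanaka's formula as in the analogous treatment of Dawson--Watanabe superprocesses. Once this regularization is controlled, the cancellation between drift and coalescence is exact, and the uniqueness argument goes through.
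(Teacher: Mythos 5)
The paper does not actually prove this proposition; it cites Theorem~6.2 and Proposition~7.2 of Liang's thesis \cite{liang_two_2009} and only adds the remark that, because the process is required to have continuous paths, the two cases $k\in\{1,2\}$ of~\eqref{duality_brownian_flow} together with It\^o's formula already pin down the martingale problem (Liang proves the full duality for all $k\geq 1$). Your sketch is therefore doing genuinely more than the paper itself does, and the route you outline — extend the Brownian flow to $k$ particles with pairwise local-time-driven coalescence, run a generator/It\^o duality computation, regularize the local time, and then invoke boundedness plus a Stone--Weierstrass argument to pass from moment identities to uniqueness of the transition semigroup — is essentially the argument that the cited reference carries out. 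A few of your steps deserve a caveat. First, you phrase the duality computation as $\frac{d}{dt}\E[\prod_{i=1}^{N_t}\bm{p}_t(X^i_t)]$, which tacitly requires $\bm{p}_t$ to have a bounded density at every $t$; this is actually part of what must be established (it is exactly the regularity encoded in the target state space and used implicitly in the statement through $\bm{p}_0(X^i_t)$), so existence should be phrased to deliver a density-valued process. Second, obtaining the Feller property from ``continuous dependence on the initial condition'' is circular as stated, since pathwise well-posedness of this degenerate SPDE is not known; what one actually does (and what Liang does) is read the Feller property directly off the duality, because $\bm{p}_0\mapsto\E_{\bm{p}_0}[\prod\langle\bm{p}_t,\phi_i\rangle]$ is manifestly continuous in the weak topology and such functionals are dense by Stone--Weierstrass. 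With those two adjustments your outline is sound and parallels the cited proof; compared with the paper's own treatment, your version is self-contained but necessarily longer, while the paper simply imports the result and records the minimal $k\in\{1,2\}$ consequence it needs downstream.
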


In \cite{liang_two_2009}, it is shown that there exists a unique Feller Markov process satisfying \eqref{duality_brownian_flow} for all $ k \geq 1 $, but since we ask that $ (\bm{p}_t, t \geq 0) $ has continuous sample paths, using Itô's formula, $ k \in \lbrace 1, 2 \rbrace $ is enough to identify the martingale problem satisfied by $ (\bm{p}_t, t \geq 0) $ on a suitable set of functions.

It follows that the process defined in Proposition~\ref{prop:duality_brownian_flow} is a weak solution to the stochastic heat equation with Wright-Fisher noise.

We now state our result on the scaling limit of the dual of the stepping stone model in a random environment.
For $ n \geq 1 $, take $ \lambda_n = \sqrt{n} $, $ x_1^n, x_2^n \in \Z $ and let $ ( \A^{\omega, \lambda_n}_t, t \geq 0) $ be the process of Definition~\ref{def:dual} with $ \lambda = \lambda_n $ started from $ \{ x_1^n, x_2^n \} $.
For $ n \geq 1 $, $ t \geq 0 $, set
\begin{align} \label{rescaled_dual}
\bm{\A}^n_t = \frac{1}{\sqrt{n}} \A^{\omega, \lambda_n}_{nt} = \left\lbrace \frac{1}{\sqrt{n}} \xi^1_{nt}, \frac{1}{\sqrt{n}} \xi^{N_{nt}}_{nt} \right\rbrace
\end{align}
(note that $ N_t \in \{1, 2\} $).

\begin{theorem}[Convergence to the Brownian flow with delayed coalescence] \label{thm:cvg_brownian_flow}
	Assume that $$ \frac{x^n_i}{\sqrt{n}} \cvgas{n} x_i $$ for $ i \in \{1, 2\} $.
	Then for $ \mu $-almost every environment $ \omega $, the process $ (\bm{\A}^n_t, t \in [0,T]) $ of \eqref{rescaled_dual} converges in distribution in $ \sko{\R \cup \R^2} $ to the Brownian flow with delayed coalescence $ (\mathcal{D}_t, t \in [0,T]) $ of Definition~\ref{def:brownian_flow} with $ \sigma^2 $ and $ \gamma $ given by \eqref{parameters}.
\end{theorem}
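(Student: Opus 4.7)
The plan is to exploit the explicit pathwise construction of $\mathcal{A}^\omega$ given just before Proposition~\ref{prop:duality}. Conditionally on $\omega$, the pair process is realised from two \emph{independent} walks $\xi^1, \xi^2$ (with laws $\mathbb{P}^\omega_{x^n_1}$ and $\mathbb{P}^\omega_{x^n_2}$) together with an independent unit exponential $E$; coalescence occurs at $T_c = \inf\{t \geq 0 : L(t) > E\}$, with $L(t)$ the weighted occupation time of the diagonal defined in \eqref{weighted_local_time}. Since $\lambda_n = \sqrt{n}$, setting $X^{n,i}_t := \xi^i_{nt}/\sqrt{n}$ the rescaled coalescence time is $T^n_c := T_c/n = \inf\{t \geq 0 : L(nt) > E\}$. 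The plan is to obtain joint convergence in distribution of $(X^{n,1}, X^{n,2}, L(n\cdot), E)$ and to conclude by a continuous mapping argument, matching Definition~\ref{def:brownian_flow} with $(\sigma^2, \gamma)$ as in \eqref{parameters}.

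\textbf{Step 1 (marginal trajectories).} Before coalescence, $\xi^1$ and $\xi^2$ are quenched-independent because their transition rates \eqref{transition_rates} depend only on each particle's own location. Applying Theorem~\ref{thm:functional_clt} to each lineage and using this independence, the pair $(X^{n,1}, X^{n,2})$ converges in distribution in $\sko{\R^2}$ to a pair $(X^1, X^2)$ of independent Brownian motions with variance $\sigma^2$ started from $x_1$ and $x_2$.

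\textbf{Step 2 (weighted local time).} The crucial convergence to establish is
\begin{equation*}
L(nt) = \int_0^{nt} \frac{\1{\xi^1_s = \xi^2_s}}{\sqrt{n}\, N(\omega, \xi^1_s)}\, ds \;\longrightarrow\; \gamma\, L^0_t(X^1 - X^2),
\end{equation*}
where $L^0_\cdot(X^1 - X^2)$ is the Brownian local time at $0$ of the difference process. This limit combines two averaging phenomena. First, the unweighted occupation time $\ell(nt) := \int_0^{nt} \1{\xi^1_s = \xi^2_s}\, ds$, renormalised by $1/\sqrt{n}$, converges to a constant multiple of $L^0_t(X^1 - X^2)$; this is the usual local-time invariance principle for differences of two random walks satisfying a functional CLT, and its tightness can be established from the heat-kernel upper bounds underlying Theorem~\ref{thm:local_clt}. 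Second, the factor $1/N(\omega, \xi^1_s)$ along the diagonal must be averaged with respect to the environment seen at the meeting sites. Using the auxiliary process alluded to just before Proposition~\ref{prop:ergodicity_environment_two_particles}, which records $T^{Y_i}\omega$ at successive meeting points $Y_i$ and admits the reversible probability measure $\pi(\omega)^2 \mu(d\omega)/\mean{\pi^2}$ on $\Omega$, the ergodic theorem yields
\begin{equation*}
\frac{1}{k}\sum_{i=1}^{k}\frac{1}{N(T^{Y_i}\omega)} \;\longrightarrow\; \frac{\mean{\pi^2/N}}{\mean{\pi^2}} = \frac{\mean{N \Nm^2}}{\mean{(N \Nm)^2}} = \gamma.
\end{equation*}
Combining the two averages, either by an excursion decomposition of $\xi^1 - \xi^2$ at $0$ or by a direct ergodic argument for the Markov chain $(T^{\xi^1_t}\omega, \xi^2_t - \xi^1_t)$ on $\Omega\times\Z$, yields the desired limit.

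\textbf{Step 3 (conclusion).} Steps~1--2 give joint convergence of $(X^{n,1}, X^{n,2}, L(n\cdot))$ to $(X^1, X^2, \gamma L^0_\cdot(X^1 - X^2))$, jointly with the independent variable $E$. The mapping sending $(f^1, f^2, \ell, E)$ to the element of $\sko{\R \cup \R^2}$ equal to $\{f^1(t), f^2(t)\}$ on $[0, T^\ast)$ and to $\{f^1(t)\}$ on $[T^\ast, T]$, with $T^\ast := \inf\{t : \ell(t) > E\}$, is continuous at the limit because Brownian local time increases strictly on the zero set of $X^1 - X^2$, so the level-$E$ inverse is almost surely a continuity point of $\ell$. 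Since $E/\gamma \sim \mathrm{Exp}(\gamma)$, the resulting limit coincides with the Brownian flow with delayed coalescence of Definition~\ref{def:brownian_flow} with parameters $(\sigma^2, \gamma)$.

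\textbf{Main obstacle.} The technical heart of the argument is Step~2, and specifically the fact that the two averaging mechanisms are strongly coupled: the sojourn duration at each meeting depends on the environment through the walkers' jump rates, and successive meeting sites are not i.i.d.\ under the quenched law. Circumventing this coupling requires either a careful excursion decomposition of $\xi^1 - \xi^2$ accompanied by a conditional law of large numbers for excursion durations given the meeting environment, or a quenched invariance principle for the two-particle environment-seen-from-the-pair process, for which one must verify ergodicity and identify the reversible measure $\pi^2 \mu / \mean{\pi^2}$. This is precisely the content of Proposition~\ref{prop:ergodicity_environment_two_particles} and its companion estimates developed in Section~\ref{sec:rescaling_dual}.
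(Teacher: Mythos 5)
Your proposal tracks the paper's proof quite closely and correctly identifies every key ingredient: the pathwise realisation of $\mathcal{A}^\omega$ from two independent walks and an exponential clock, the quenched functional CLT of Theorem~\ref{thm:functional_clt} for each lineage, Proposition~\ref{prop:ergodicity_environment_two_particles} and its reversible measure $\pi^2\mu/\mean{\pi^2}$ for the environment viewed from the two walks when they meet, and the continuous-mapping step. What you describe as the ``main obstacle'' — the coupling between the diffusive occupation-time convergence and the ergodic averaging of $1/N$ along meeting sites — is precisely what the paper's Lemma~\ref{lemma:convergence_local_time} resolves, and the resolution is neater than either of your two suggested routes. After passing to an almost-sure coupling via Skorokhod representation, the paper writes
\begin{align*}
L_n(t) = \frac{1}{\sqrt{n}}L^0_n(nt)\cdot\frac{1}{L^0_n(nt)}\int_0^{L^0_n(nt)}\frac{ds}{N(Y^n(s))},
\end{align*}
where $L^0_n$ is the unweighted occupation time and $Y^n$ is the environment viewed from the walks \emph{time-changed by $L^0_n$}. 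In the time-changed clock the $1/N$ factor is a Birkhoff average along the stationary ergodic process $Y^n$, so the two mechanisms decouple multiplicatively: the first factor handles the Brownian local time and the second is a pure ergodic theorem giving $\gamma$. This is essentially your excursion-decomposition idea done cleanly — one does not need the full two-particle chain on $\Omega\times\Z$, which would have a non-compact state space and require a separate positive-recurrence argument.

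One point you should not leave vague: you write that $\frac{1}{\sqrt{n}}\int_0^{nt}\1{\xi^1_s=\xi^2_s}ds$ converges to ``a constant multiple'' of $L^0_t(X^1-X^2)$, and then conclude with parameter $\gamma$ as though that constant were $1$. In a uniform environment the constant is indeed $1$, but in a random environment the local CLT introduces the factor $\pi(\omega,\cdot)^2$ into the diagonal return probabilities, so the constant is a priori non-trivial and must be tracked. The paper asserts the constant is $1$ as a direct consequence of the a.s.\ Skorokhod coupling (without further argument), and the constant then combines with the ergodic limit $\int\frac{1}{N}\frac{\pi^2}{\mean{\pi^2}}d\mu$ to produce $\gamma$ in \eqref{parameters}. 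If you keep the constant abstract, your proposal does not close this loop: you would need to either justify that the constant is $1$, or show that the product of the constant with the ergodic average reproduces $\gamma$. This is the only genuine gap I see; otherwise the structure of your argument is the one used by the paper.
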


We prove Theorem~\ref{thm:cvg_brownian_flow} in Subsection~\ref{subsec:proof_cvg_brownian_flow}.
We already know from Theorem~\ref{thm:functional_clt} that each lineage converges in distribution to \Bm with variance $ \sigma^2 $.
It thus remains to show that the process $ L(t) $ defined in \eqref{weighted_local_time} becomes asymptotically proportional to $ L^0_t(X^1-X^2) $.
This is done by applying the ergodic theorem to an auxiliary process which is defined in Subsection~\ref{subsec:two_particles}.

Durrett and Restrepo \cite{durrett_one-dimensional_2008} already showed that, in the stepping stone model with \textit{uniform} population sizes and in the long range voter model, the time spent together by two lineages before they coalesce is asymptotically exponential (see also \cite{maruyama_rate_1971} and \cite{greven_continuum_2016}).
Theorem~\ref{thm:cvg_brownian_flow} extends this result to the stepping stone model in a random environment.

\begin{remark}
	The above result also holds if the process $ (\A^\omega_t, t \in [0,T]) $ is started from more than two lineages.
	The rescaled process $ (\bm{\A}^n_t, t \in [0,T]) $ then converges to the Brownian flow with delayed coalescence started from the corresponding number of lineages, as defined in \cite{liang_two_2009}.
\end{remark}

\section{Large scale behaviour of the dual of the stepping stone model in a random environment} \label{sec:rescaling_dual}

The aim of this section is to prove Theorem~\ref{thm:cvg_brownian_flow}.
We start in Subsection~\ref{subsec:proof_clt} by showing the quenched functional central limit theorem for the random walk $ (\xi_t)_{t\in [0,T]} $ (Theorem~\ref{thm:functional_clt}).
In Subsection~\ref{subsec:two_particles}, we introduce an auxiliary process called the environment viewed by the two random walks, and we show that it is ergodic and give its stationary distribution.
Theorem~\ref{thm:cvg_brownian_flow} is then proved in Subsection~\ref{subsec:proof_cvg_brownian_flow}.

\subsection{Quenched functional central limit theorem for the nearest neighbour reversible random walk} \label{subsec:proof_clt}

Here we prove Theorem~\ref{thm:functional_clt} using a martingale approximation of the random walk $ \proc{\xi^n} $ introduced in \cite{lam_les_2012}.
We start by defining a real-valued measurable function $ F(\omega,\cdot) : \Z \to \R $ by
\begin{equation} \label{definition_F}
F(\omega, k) = \left\lbrace
\begin{aligned}
\sum_{i=0}^{k-1} \frac{1}{N(\omega,i) N(\omega,i+1)} \hspace{25pt} & k \geq 1,\\
0 \hspace{125pt} & k=0, \\
- \sum_{i=1}^{-k} \frac{1}{N(\omega,-i) N(\omega,-i+1)} \hspace{10pt} & k \leq -1.
\end{aligned}
\right.
\end{equation}
We note that for all $ k, l \in \Z $ and $ \omega \in \Omega $,
\begin{align} \label{cocycle}
F(\omega, k+l) = F(\omega, k) + F(T^k\omega, l).
\end{align}
It follows (see \eqref{martingale_equation_F} below) that $ (F(\omega, \xi_{t}), t \geq 0) $ is a martingale.
By the pointwise ergodic theorem,
\begin{align} \label{scale_function}
\lim_{\abs{k} \to \infty} \frac{F(\omega, k)}{k} = \mean{\frac{1}{N T^1 N}}, \quad \mu(d\omega)-a.s.
\end{align}

We then decompose $ \xi^n_t = \frac{1}{\sqrt{n}} \xi_{nt} $ as
\begin{align*}
\xi^n_t = \left( \mean{\frac{1}{N T^1 N}} \right)^{-1} M^\omega_n(t) + R^\omega_n(t),
\end{align*}
with
\begin{align*}
M^\omega_n(t) = \frac{1}{\sqrt{n}} F(\omega, \xi_{nt}).
\end{align*}
Since, from \eqref{cocycle} and \eqref{definition_F},
\begin{align} \label{martingale_equation_F}
\frac{N(\omega,x+1)}{\Nm(\omega,x)} (F(\omega,x+1) - F(\omega,x)) + \frac{N(\omega,x-1)}{\Nm(\omega,x)} (F(\omega,x-1) - F(\omega,x)) = 0,
\end{align}
the process $ M^\omega_n $ is a martingale \wrt the natural filtration of $ \proc{\xi^n} $.

\begin{lemma} \label{lemma:remainder}
	For $ \mu $-almost every $ \omega $ and for any $ T >0 $,
	\begin{align*}
	\lim_{n \to \infty} \Eq[\xi^n_0]{ \sup_{t \in [0,T]} R^\omega_n(t)^2 } = 0,
	\end{align*}
	where the expectation is taken \wrt the distribution of $ \xi^n $ started at $ \xi^n_0 $ in the environment~$ \omega $ (recall that $ \xi^n_0 \to x_0 $).
\end{lemma}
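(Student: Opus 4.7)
The plan is to write $R^\omega_n$ in terms of the ``corrector'' $r(\omega,k):=k-c\,F(\omega,k)$, where $c=\mean{\frac{1}{N T^1 N}}^{-1}$, and to control it using the asymptotic linearity of $F(\omega,\cdot)$ together with a quenched second moment estimate on $\xi_{nt}$. Directly from the definitions, $R^\omega_n(t)=\frac{1}{\sqrt{n}} r(\omega,\xi_{nt})$. The ergodic statement \eqref{scale_function} asserts that $F(\omega,k)/k\to c^{-1}$ as $|k|\to\infty$ for $\mu$-almost every $\omega$, so $r(\omega,k)/k\to 0$ in both directions. Hence, for any fixed $\epsilon>0$, there is a random threshold $K_\epsilon(\omega)<\infty$ with $|r(\omega,k)|\le\epsilon|k|$ for $|k|\ge K_\epsilon(\omega)$, which yields the deterministic-in-$k$ envelope
$$r(\omega,k)^2 \le R_\epsilon(\omega) + \epsilon^2 k^2, \qquad k\in\mathbb{Z},$$
with $R_\epsilon(\omega):=\max_{|k|\le K_\epsilon(\omega)} r(\omega,k)^2<\infty$.

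The second step is to bound $\mathbb{E}^\omega_{\xi^n_0}\bigl[\sup_{t\le T}\xi_{nt}^2\bigr]$ by a multiple of $n$. For this I would exploit \eqref{martingale_equation_F}: it says exactly that the quenched generator of $(\xi_t)$ annihilates $F(\omega,\cdot)$ (up to the prefactor $m$), so $F(\omega,\xi_t)$ is a purely discontinuous quenched martingale whose jumps are $\pm 1/(N(\omega,\xi_{s-})N(\omega,\xi_{s-}\pm 1))$, each bounded by $K^2$ in absolute value by \eqref{uniform_ellipticity}, and whose total jump rate is at most $m$. Consequently $\langle F(\omega,\xi)\rangle_t\le 2mK^4 t$, and Doob's $L^2$ inequality yields
$$\mathbb{E}^\omega_{\xi^n_0}\Bigl[\sup_{s\in[0,nT]}\bigl(F(\omega,\xi_s)-F(\omega,\xi_0)\bigr)^2\Bigr] \le 8mK^4\, n T.$$
Uniform ellipticity also provides the two-sided bound $|k|/K^2\le|F(\omega,k)|\le K^2|k|$ valid for every $k\in\mathbb{Z}$. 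Combining these with $|\xi_0|\le\sqrt{n}(|x_0|+1)$ for $n$ large (from $\xi^n_0\to x_0$) gives $\mathbb{E}^\omega_{\xi^n_0}\bigl[\sup_{t\le T}\xi_{nt}^2\bigr]\le C_1(\omega,T,x_0)\,n$.

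Stringing the two estimates together yields
$$\mathbb{E}^\omega_{\xi^n_0}\Bigl[\sup_{t\in[0,T]} R^\omega_n(t)^2\Bigr] \le \frac{R_\epsilon(\omega)}{n} + \epsilon^2\, C_1(\omega,T,x_0).$$
Sending $n\to\infty$ first and then $\epsilon\to 0$ concludes the proof. The part I expect to be the most delicate is the moment bound on $\xi_{nt}$: although conceptually routine, one must carefully verify from \eqref{martingale_equation_F} that $F(\omega,\xi_t)$ is a genuine martingale under the quenched law, and track the $K$-dependent constants in its quadratic variation. The real conceptual content, on the other hand, is the $o(|k|)$ decay of the corrector $r(\omega,\cdot)$, which is supplied essentially for free by the ergodic theorem applied to the additive cocycle $F$.
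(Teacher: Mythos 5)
Your proof is correct and follows essentially the same route as the paper: asymptotic sublinearity of the corrector $r(\omega,\cdot)=\mathrm{id}-c\,F(\omega,\cdot)$ from the ergodic theorem, combined with a Doob $L^2$ bound on the martingale $F(\omega,\xi_\cdot)$ (equivalently $M^\omega_n$). The only cosmetic difference is that the paper first reexpresses the envelope $\abs{r(\omega,x)}\le\varepsilon\abs{x}+\mathrm{const}$ in terms of $\abs{F(\omega,x)}$ (so that Doob applies directly to $M^\omega_n$), whereas you keep the envelope in $\abs{x}$ and then pass to $\xi_{nt}^2\le K^4 F(\omega,\xi_{nt})^2$ via uniform ellipticity; both versions use the same ingredients and the same Doob step.
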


Before proving Lemma~\ref{lemma:remainder}, let us show how it implies Theorem~\ref{thm:functional_clt}.
The only point left to be shown is that $ M^\omega_n $ converges in distribution to \Bm with the right variance.

\begin{proof}[Proof of Theorem~\ref{thm:functional_clt}]
	We begin by noting that the predictable quadratic variation of $ M^\omega_n $ is
	\begin{align} \label{qvar_Mn}
	\langle M^\omega_n \rangle_t = \frac{1}{n} \int_{0}^{nt} h(T^{\xi_s}\omega) ds 
	\end{align}
	with, after a simple calculation,
	\begin{align*}
	h(\omega) = \frac{m}{\Nm(\omega) N(\omega)^2} \left( \frac{1}{N(T^1\omega)} + \frac{1}{N(T^{-1}\omega)} \right).
	\end{align*}
	Let us define the so-called process of the environment viewed from the random walk as
	\begin{align} \label{def_Z}
	Z(t) = T^{\xi_t}\omega, \quad t \geq 0.
	\end{align}
	We wish to use the ergodic theorem to show that $ \langle M^\omega_n \rangle_t $ converges to a constant times $ t $.
	To do this, we show that the $ \Omega $-valued Markov process $ (Z(t), t \geq 0) $ with initial distribution $ \pi(\omega) \mu(d\omega) $ is stationary and ergodic (recall that $ \pi $ was defined in \eqref{reversible_measure}).
	Let $ \mathcal{L} $ denote the infinitesimal generator of $ (Z(t), t \geq 0) $.
	It is symmetric in $ L^2(\pi) $, \textit{i.e.}
	\begin{align*}
	\int_{\Omega} g(\omega) \mathcal{L} f(\omega) \pi(\omega) \mu(d\omega) = \int_{\Omega} \mathcal{L} g(\omega) f(\omega) \pi(\omega) \mu(d\omega)
	\end{align*}
	for all $ f, g \in L^2(\pi) $.
	Furthermore
	\begin{align*}
	\int_{\Omega} f(\omega) (-\mathcal{L})f(\omega) \pi(\omega) \mu(d\omega) = \frac{1}{2} \frac{m}{\mean{N \Nm}} \int_{\Omega} N(\omega) N(T^1\omega) \left( f(T^1\omega) - f(\omega) \right)^2 \mu(d\omega).
	\end{align*}
	Hence any function $ f $ in $ L^2(\pi) $ such that $ \mathcal{L}f = 0 $ must be invariant by translation.
	By the ergodicity of $ (T^x)_{x \in \Z} $ (assumption \textit{c}), such an $ f $ is constant.
	As a result $ (Z(t), t \geq 0) $ is a stationary and ergodic Markov process with reversible measure $ \pi(\omega) \mu(d\omega) $.
	
	As a consequence, for any $ t \geq 0 $,
	\begin{align*}
	\langle M^\omega_n \rangle_t \cvgas[a.s.]{n} t \int_\Omega h(\omega) \pi(\omega) \mu(d\omega), \quad \mu(d\omega)-a.s.
	\end{align*}
	This, together with the observation that
	\begin{align*}
	\sup_{t \geq 0} \abs{ M^\omega_n(t) - M^\omega_n(t^-) } \leq \frac{K^2}{\sqrt{n}}
	\end{align*}
	imply that $ (M^\omega_n(t))_{t\geq 0} $ converges in distribution in $ \sko[\R_+]{\R} $ to \Bm with variance $ \mean{h \pi} $ for $ \mu $ almost every $ \omega $ \citep[Proposition~1]{rebolledo_central_1980}.
	To conclude the proof of Theorem~\ref{thm:functional_clt}, we note that
	\begin{align*}
	\left( \mean{\frac{1}{N T^1 N}} \right)^{-2} \int_\Omega h(\omega) \pi(\omega) \mu(d\omega) = 2 m \left( \mean{\frac{1}{N T^1 N}} \mean{N \Nm} \right)^{-1} = \sigma^2.
	\end{align*}
\end{proof}

Let us now prove Lemma~\ref{lemma:remainder}.

\begin{proof}[Proof of Lemma~\ref{lemma:remainder}]
	This proof follows closely that of Proposition~4.2.2 in \cite{lam_les_2012}.
	From the convergence \eqref{scale_function}, we see that for any $ \varepsilon > 0 $, there exists $ M_\varepsilon = M_\varepsilon(\omega) $ such that, for all $ x \in \Z $,
	\begin{align*}
	\abs{x} > M_\varepsilon \implies \abs{ \frac{F(\omega, x)}{x} - \mean{\frac{1}{N T^1N}} } \leq \varepsilon.
	\end{align*}
	Set $ c = \mean{\frac{1}{N T^1N}} $.
	Hence, for any such $ x $,
	\begin{align} \label{bound_F_1}
	\abs{x} - \abs{\frac{F(\omega,x)}{c}} \leq \abs{\frac{F(\omega,x)}{c} - x} \leq \frac{\varepsilon \abs{x}}{c}.
	\end{align}
	Taking $ \varepsilon < c $, we have
	\begin{align*}
	\abs{x} \left( 1 - \frac{\varepsilon}{c} \right) \leq \abs{\frac{F(\omega,x)}{c}}
	\end{align*}
	and hence
	\begin{align} \label{bound_F_2}
	\frac{\varepsilon}{c} \abs{x} \leq \frac{\varepsilon \abs{F(\omega,x)}}{c(c-\varepsilon)}.
	\end{align}
	Combining \eqref{bound_F_1} and \eqref{bound_F_2}, we obtain
	\begin{align*}
	\forall \abs{x} > M_\varepsilon, \quad \abs{\frac{F(\omega,x)}{c} - x} \leq \frac{\varepsilon \abs{F(\omega,x)}}{c(c-\varepsilon)}.
	\end{align*}
	Let
	\begin{align*}
	H_\varepsilon = H_\varepsilon(\omega) = \sup_{\abs{x} \leq M_\varepsilon} \frac{\abs{F(\omega,x)}}{c}.
	\end{align*}
	Then for all $ x \in \Z $,
	\begin{align} \label{bound_F_3}
	\abs{\frac{F(\omega,x)}{c} - x} \leq \frac{\varepsilon \abs{F(\omega,x)}}{c(c-\varepsilon)} \wedge (M_\varepsilon + H_\varepsilon).
	\end{align}
	Replacing $ x $ with $ \xi_{nt} $, dividing by $ \sqrt{n} $ and squaring both sides, we obtain
	\begin{align} \label{bound_Rn}
	R^\omega_n(t)^2 \leq \frac{\varepsilon^2}{c^2(c-\varepsilon)^2} M^\omega_n(t)^2 + \frac{(M_\varepsilon + H_\varepsilon)^2}{n}.
	\end{align}
	By Doob's inequality,
	\begin{align*}
	\Eq[\xi^n_0]{\sup_{t \in [0,T]} M^\omega_n(t)^2 } &\leq 2 \Eq[\xi^n_0]{M^\omega_n(T)^2} \\
	&\leq 2 T \sup_{\omega\in \Omega} h(\omega) + 2 (M^\omega_n(0))^2 \\
	&\leq \frac{4}{3} m T K^4 + C,
	\end{align*}
	for some $C > 0$ using \eqref{qvar_Mn} and \eqref{uniform_ellipticity} (the bound on $M^\omega_n(0)$ comes from the fact that $ \abs{F(\omega,x)} \leq C \abs{x} $ and that $ \xi^n_0 $ converges).
	As a result, the expectation of the first term on the right hand side of \eqref{bound_Rn} can be made arbitrarily small, uniformly in $ t \in [0,T] $, by choosing $ \varepsilon $ small enough, while the second term vanishes as $ n \to \infty $ for each $ \varepsilon > 0 $.
	Hence
	\begin{align*}
	\lim_{n \to \infty} \Eq[\xi^n_0]{ \sup_{t \in [0,T]} R^\omega_n(t)^2 } = 0 \quad \mu(d\omega)-a.s.
	\end{align*}
\end{proof}

In passing, we can prove the following, which will be useful later in the proof of Theorem~\ref{thm:cvg-SHE-WF-noise}.

\begin{lemma} \label{lemma:bound_variance_RW}
	For $ \mu $-almost every $ \omega \in \Omega $,
	\begin{align*}
	\sup_{t \geq 0} \: \sup_{x \in \Z} \: \Eq[x]{\frac{\abs{\xi_{t} - x}^2}{t}} < \infty.
	\end{align*}
\end{lemma}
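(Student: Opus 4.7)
The plan is to exploit the martingale $M_t := F(\omega,\xi_t) - F(\omega,\xi_0)$ built from the cocycle $F$ defined in \eqref{definition_F}, together with the quantitative lower bound on $|F(\omega,y)-F(\omega,x)|$ that comes directly from uniform ellipticity. The key point is that nothing in the argument needs the ergodic theorem: the bound will hold for every $\omega$ satisfying \eqref{uniform_ellipticity}.

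First I would observe that, thanks to the identity \eqref{martingale_equation_F}, the process $(F(\omega,\xi_t),\, t\geq 0)$ is a martingale under $\mathbb{P}_x^\omega$, so $M_t$ is a mean-zero martingale started at $0$. A direct computation of its predictable quadratic variation (summing over the two possible jumps at $\xi_s$, each contributing $(1/(N(\omega,\xi_s)N(\omega,\xi_s\pm 1)))^2$ weighted by the corresponding jump rate $mN(\omega,\xi_s\pm 1)/N_3(\omega,\xi_s)$) gives
\begin{align*}
\langle M\rangle_t = \int_0^t \frac{m}{N_3(\omega,\xi_s)\,N(\omega,\xi_s)^2}\left(\frac{1}{N(\omega,\xi_s+1)}+\frac{1}{N(\omega,\xi_s-1)}\right)ds.
\end{align*}
Under \eqref{uniform_ellipticity}, the integrand is bounded by a constant $C=C(m,K)$, so $\mathbb{E}_x^\omega[M_t^2]=\mathbb{E}_x^\omega[\langle M\rangle_t] \leq Ct$ for every $x\in\Z$, $t\geq 0$ and every $\omega$ satisfying uniform ellipticity.

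Next I would establish the (deterministic) lower bound $|F(\omega,y)-F(\omega,x)|\geq |y-x|/K^2$ for all $x,y\in\Z$. This follows straight from the definition \eqref{definition_F}, because $F(\omega,\cdot)$ is strictly increasing with increments $F(\omega,i+1)-F(\omega,i)=1/(N(\omega,i)N(\omega,i+1))\geq 1/K^2$ by \eqref{uniform_ellipticity}. Consequently $|\xi_t-x|^2 \leq K^4\,|F(\omega,\xi_t)-F(\omega,x)|^2 = K^4 M_t^2$, and combining with the quadratic-variation bound yields
\begin{align*}
\mathbb{E}_x^\omega\!\left[\frac{|\xi_t-x|^2}{t}\right]\leq \frac{K^4}{t}\,\mathbb{E}_x^\omega[M_t^2]\leq K^4 C,
\end{align*}
uniformly in $x\in\Z$ and $t>0$, for $\mu$-almost every $\omega$ (in fact for every $\omega$ in the full-measure set where \eqref{uniform_ellipticity} holds), which is the desired conclusion.

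There is no serious obstacle here: the only mild point is to check that the predictable quadratic variation is the right object (the martingale has bounded jumps of order $1/K^2$, so $\mathbb{E}_x^\omega[M_t^2]=\mathbb{E}_x^\omega[\langle M\rangle_t]$ is standard), and to verify the sign and monotonicity of $F$ when $x$ or $y$ is negative. Both are immediate from \eqref{definition_F}.
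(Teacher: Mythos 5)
Your proposal is correct and follows essentially the same route as the paper: the cocycle martingale $M_t=F(\omega,\xi_t)-F(\omega,\xi_0)$, the deterministic lower bound $|F(\omega,k)-F(\omega,k')|\geq|k-k'|/K^2$ from uniform ellipticity, and the $O(t)$ bound on the predictable quadratic variation. (Incidentally, your constant $K^4$ is the right one; the paper's displayed $K^2$ there is a harmless typo.)
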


\begin{proof}
	By assumption \eqref{uniform_ellipticity}, for all $k, k' \in \Z$,
	\begin{align*}
	\abs{ F(\omega, k) - F(\omega, k') } \geq \frac{1}{K^2} \abs{k - k'}.
	\end{align*}
	Hence
	\begin{align*}
	\Eq[x]{ \abs{ \xi_t - x }^2 } \leq K^2 \Eq{ \abs{ M_1^\omega(t) - M^\omega_1(0) }^2 }.
	\end{align*}
	By \eqref{qvar_Mn} and using \eqref{uniform_ellipticity} again, there exists a constant $ C > 0 $ such that, for all $ t\geq 0$,
	\begin{align*}
	\Eq{ \abs{ M_1^\omega(t) - M_1^\omega(0) }^2 } \leq C t.
	\end{align*}
	The result then follows.
\end{proof}

\subsection{The environment viewed from the two random walks} \label{subsec:two_particles}

We now introduce an auxiliary process which records the environment viewed by the two independent random walks when they meet.
Let $ (\xi^1_t)_{t\geq 0} $ and $ (\xi^2_t)_{t\geq 0} $ be two independent random walks on $ \Z $ with transition rates given by \eqref{transition_rates} and started from $ x_1 $ and $ x_2 $ respectively.
Define, for $ t\geq 0 $, the time spent together by the two random walks up to time $ t $,
\begin{align} \label{definition_L0}
L_0(t) = \int_{0}^{t} \1{\xi^1_s = \xi^2_s} ds.
\end{align}
Also let $ L_0^{-1}(\cdot) $ be the right-continuous inverse of $ L_0 $, \textit{i.e.}
\begin{align*}
L_0^{-1}(t) = \inf \left\lbrace s \geq 0 : L_0(s) > t \right\rbrace.
\end{align*}
Note that
\begin{align} \label{time_change_justif}
\lbrace L_0^{-1}(t), t \geq 0 \rbrace = \lbrace t \geq 0 : \xi^1_t = \xi^2_t \rbrace.
\end{align}
As in \eqref{def_Z}, let $ (Z(t), t \geq 0) $ be the environment viewed from the first random walk, \textit{i.e.}
\begin{align*}
Z(t) = T^{\xi^1_t} \omega.
\end{align*}
In words, each time the first random walk jumps, we translate the whole environment in order to always keep the random walk at the origin.
We now want to keep track of the environment viewed from the two random walks.
Of course, when the two random walks are in different locations, this does not make sense, so whenever the two random walks are not together, we "fast-forward" to the next time when they are in the same location in $ \Z $, and we translate the whole environment so that they are both at the origin, until one of them jumps again, and we go to the next time when they rejoin, \textit{etc}.
This process is then a time change of $ (Z(t), t \geq 0) $, and in view of \eqref{time_change_justif}, the corresponding time change is exactly given by $ L_0^{-1}(\cdot) $.
This leads to the following definition of the environment viewed from the two random walks:
\begin{align*}
Y(t) = Z(L_0^{-1}(t)) \quad t\geq 0.
\end{align*}
The main result of this subsection is the following.
It gives a description of the typical environment seen by the two random walks whenever they are at the same location, and will be crucial to determine their overall coalescence rate.

\begin{proposition} \label{prop:ergodicity_environment_two_particles}
	The process $ (Y(t))_{t \geq 0} $, started from the initial distribution
	\begin{align} \label{stationary_measure_two_particles}
	\frac{\pi(\omega)^2}{\mean{\pi^2}} \mu(d\omega),
	\end{align}
	is a stationary and ergodic Markov process.
\end{proposition}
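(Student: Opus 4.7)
My plan is to realise $Y$ as the trace (time change) of a larger, reversible Markov process on $\Omega\times\Z$ which encodes the environment viewed from $\xi^1$ together with the signed displacement $\xi^2-\xi^1$, and then to transfer stationarity and ergodicity via the classical trace construction. Concretely, I set
\begin{equation*}
W(t) \;=\; \bigl(T^{\xi^1_t}\omega,\ \xi^2_t-\xi^1_t\bigr)\;\in\; \Omega\times\Z,\qquad t\ge 0.
\end{equation*}
Since the jump rates of each lineage at position $x$ depend only on $T^x\omega$, $W$ is a Markov process with generator $\mathcal{L}_W=\mathcal{L}^{(1)}+\mathcal{L}^{(2)}$: $\mathcal{L}^{(1)}$ records jumps of $\xi^1$ and simultaneously translates the environment by $T^{\pm 1}$ and shifts $k$ by $\mp 1$, while $\mathcal{L}^{(2)}$ records jumps of $\xi^2$ and only changes $k$ by $\pm 1$ with rates depending on the environment at displacement $k$.

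The natural candidate for a reversible measure for $W$ is obtained from $\pi(\omega,x_1)\pi(\omega,x_2)\mu(d\omega)\otimes\#(dx_1)\otimes\#(dx_2)$, under which $(\omega,\xi^1,\xi^2)$ is reversible (each coordinate is an independent copy of the reversible walk of Subsection~\ref{subsec:clt_RWRE}); pushing this forward under $(\omega,x_1,x_2)\mapsto(T^{x_1}\omega,x_2-x_1)$ and using the translation invariance of $\mu$ yields the $\sigma$-finite measure
\begin{equation*}
\nu(d\omega,dk) \;=\; \pi(\omega)\,\pi(T^k\omega)\,\mu(d\omega)\otimes\#(dk).
\end{equation*}
Reversibility of $W$ under $\nu$ follows by an edge-by-edge detailed-balance check using the single-walk identity $\pi(\omega)\,mN(T^1\omega)/\Nm(\omega)=\pi(T^1\omega)\,mN(\omega)/\Nm(T^1\omega)$ already behind \eqref{reversible_measure}, together with its analogue for the second walk at displacement $k$. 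For ergodicity, the associated Dirichlet form decomposes as a sum of nonnegative contributions coming from $\mathcal{L}^{(1)}$ and $\mathcal{L}^{(2)}$; any $f\in L^2(\nu)$ with $\mathcal{L}_W f=0$ kills both pieces, so the $\mathcal{L}^{(2)}$-piece forces $f(\omega,k+1)=f(\omega,k)$ $\mu$-a.s., i.e.\ $f(\omega,k)=g(\omega)$, after which the $\mathcal{L}^{(1)}$-piece forces $g(T^{\pm 1}\omega)=g(\omega)$ $\mu$-a.s., and the ergodicity of $(T^x)_{x\in\Z}$ from assumption (c) makes $g$ $\mu$-a.s.\ constant.

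Finally, the set $A=\Omega\times\{0\}$ has finite $\nu$-mass $\mean{\pi^2}$, and $L_0(t)$ from \eqref{definition_L0} is precisely the occupation time of $W$ in $A$ up to time $t$; because each lineage rescales to a nondegenerate Brownian motion by Theorem~\ref{thm:functional_clt}, the displacement process $\xi^2-\xi^1$ is recurrent $\mu$-a.s., so $L_0(t)\to\infty$ almost surely. Consequently $Y(t)$ is the first coordinate of $W(L_0^{-1}(t))$, i.e.\ the trace of $W$ on $A$. The standard trace theory for reversible Markov processes then yields that $Y$ is a strong Markov process whose reversible (hence stationary) probability measure is $\nu|_A/\nu(A)=\frac{\pi(\omega)^2}{\mean{\pi^2}}\mu(d\omega)$, and ergodicity of $Y$ is inherited from that of $W$ because any bounded $Y$-invariant function lifts to a bounded $W$-invariant function. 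I expect the main obstacle to be the ergodicity step: ruling out nontrivial $k$-dependent solutions of $\mathcal{L}_W f=0$ is delicate since the marginal displacement dynamics is not an autonomous random walk on $\Z$ but is coupled to the environment, and the argument above sidesteps this by extracting $k$-independence directly from the $\mathcal{L}^{(2)}$-part of the Dirichlet form alone.
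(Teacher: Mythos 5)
Your route is genuinely different from the paper's and a good deal of it is correct. The paper works directly with the jump process $Y$ on $\Omega$: it computes the jump rates $q(\omega,y)$ (which involve the hitting distribution of the next meeting site of the two walks), and proves the reversibility identity $\pi(\omega)^2 q(\omega,y)=\pi(T^y\omega)^2 q(T^y\omega,-y)$ via a finite-volume cutoff $\lbrace -A,\dots,A\rbrace$, a symmetry identity for the two-particle harmonic kernel $E^{A,\omega}$, and a limit $A\to\infty$. You avoid all of that by lifting to $W(t)=(T^{\xi^1_t}\omega,\,\xi^2_t-\xi^1_t)$ on $\Omega\times\Z$, where reversibility under $\nu(d\omega,dk)=\pi(\omega)\pi(T^k\omega)\,\mu(d\omega)\#(dk)$ reduces to an immediate edge-by-edge detailed-balance computation using only the single-walk identity behind~\eqref{reversible_measure}; I checked both jump types and the detailed balance holds. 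The identification of $Y$ as the trace of $W$ on $A=\Omega\times\lbrace 0\rbrace$, with $L_0$ the associated occupation additive functional and $\nu|_A/\nu(A)$ equal to~\eqref{stationary_measure_two_particles}, is also correct. This side-steps the harmonic-measure computation and is, at least for the reversibility/stationarity part, cleaner than the paper.

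The place that needs more care is the ergodicity transfer, and the obstacle is slightly different from the one you flagged. Your Dirichlet-form computation for $W$ is fine as far as it goes, but since $\nu$ has infinite total mass the only $L^2(\nu)$ function killed by $\mathcal{L}_W$ is $f\equiv 0$, which by itself does not give ergodicity in the sense used downstream (the Birkhoff theorem applied to $Y$ in Lemma~\ref{lemma:convergence_local_time} needs: every \emph{bounded} $Y$-invariant $h$ is $\rho$-a.e.\ constant, $\rho=\pi^2\mu/\mean{\pi^2}$). Your stated lift, $\tilde h(w)=E_w[h(W_{\tau_A})]$, is the right object, but you then have to (i) show $\tilde h$ is $W$-harmonic on $A$ as well as on $A^c$, which is exactly where the trace-Dirichlet-form/Douglas-integral identity $\mathcal{E}^Y(h,h)=\mathcal{E}^W(\tilde h,\tilde h)$ enters, and (ii) run your two-piece Dirichlet-form vanishing argument on $\tilde h$ even though $\tilde h$ is bounded but not in $L^2(\nu)$. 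Step (ii) is where the paper's version has an advantage: it works directly with the finite invariant measure $\rho$ and its own Dirichlet form $\mathcal{E}^Y(f,f)=\tfrac12\sum_y\int q(\omega,y)(f(T^y\omega)-f(\omega))^2\pi^2\,d\mu$, on which the vanishing argument and assumption~(c) close the loop without any infinite-measure detour. Your plan is salvageable — one would either invoke trace Dirichlet-form theory carefully or argue via Harris recurrence/irreducibility of $W$ — but as written it leaves a real hole at the point where you pass from "$\mathcal{L}_W f=0$ in $L^2(\nu)\Rightarrow f=0$" to "the trace $Y$ is ergodic."
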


\begin{proof}
	Let $\tau_0$ denote the first time at which the two random walks $\xi^1$ and $\xi^2$ meet, \textit{i.e.}
	\begin{align*}
	\tau_0 = \inf \lbrace t \geq 0 : \xi^1_t = \xi^2_t \rbrace.
	\end{align*}
	Let $\mathcal{F}_0(\Z)$ be the space of real-valued functions on $\Z$ which are zero on all but a finite number of points.
	For $f \in \mathcal{F}_0(\Z)$, define
	\begin{align*}
	E^\omega f (x,y) = \Eq[x,y]{ f(\xi^1_{\tau_0}) }.
	\end{align*}
	For $x \in \Z$, $z \in \{ -1, 1 \}$ and $\omega \in \Omega $, let
	\begin{align} \label{definition_pz}
	j(\omega, x, z) = m \frac{N(\omega, x+z)}{\Nm(\omega, x)}
	\end{align}
	be the rate at which $\xi^i$ jumps from $x$ to $x+z$.
	The process $( \xi^1_{L_0^{-1}(t)}, t \geq 0 )$ then jumps from $x$ to $y$ at rate
	\begin{align*}
	q(\omega, x, y) = 2 \sum_{z \in \{-1,1\}} j(\omega, x, z) E^\omega \delta_y (x, x+z),
	\end{align*}
	where $\delta_y(x) = 1 $ if $ x = y $ and 0 otherwise.
	Setting $q(\omega, y) = q(\omega, 0, y)$, the generator of $(Y(t))_{t\geq 0}$ takes the form
	\begin{align*}
	\mathcal{L}^Y f(\omega) = \sum_{y \in \Z} q(\omega, y) \left( f(T^y \omega) - f(\omega) \right).
	\end{align*}
	Proposition~\ref{prop:ergodicity_environment_two_particles} will then be proved if we show that for $\mu$ almost every $\omega \in \Omega$ and every $y \in \Z$,
	\begin{align} \label{reversibility_two_particles}
	\pi(\omega)^2 q(\omega, y) = \pi(T^y\omega)^2 q(T^y\omega, -y).
	\end{align}
	Indeed the stationarity of the measure~\eqref{stationary_measure_two_particles} follows directly from \eqref{reversibility_two_particles} and ergodicity is a consequence of assumption \textit{c} and
	\begin{align*}
	\int_\Omega f(\omega) (-\mathcal{L}^Y) f(\omega) \pi(\omega)^2 \mu(d\omega) = \frac{1}{2} \sum_{y \in \Z} \int_\Omega q(\omega, y) \left( f(T^y\omega) - f(\omega) \right)^2 \pi(\omega)^2 \mu(d\omega).
	\end{align*}
	
	We now prove \eqref{reversibility_two_particles}.
	To avoid writing infinite sums, we first restrict ourselves to random walks on a bounded region $\lbrace -A, \ldots, A \rbrace$.
	For $A \geq 1$, define
	\begin{equation*}
	j^A(\omega,x,z) = \left\lbrace
	\begin{aligned}
	j(\omega,x,z) \hphantom{0} & \text{ if } \abs{x} \leq A \text{ and } \abs{x+z} \leq A, \\
	0 \hphantom{p(x,z)} & \text{ otherwise.}
	\end{aligned}
	\right.
	\end{equation*}
	Let $(\xi^{A,1}_t)_{t \geq 0}$ and $(\xi^{A,2}_t)_{t\geq 0}$ be two independent random walks on $\lbrace -A, \ldots, A \rbrace$ which jump from $x$ to $x+z$ at rate $j^A(\omega,x,z)$ (\textit{i.e.} they behave as $\xi^i$ but the jumps leading outside $\lbrace -A, \ldots, A \rbrace$ are suppressed).
	Let $\tau_0^A$ be the first time at which $\xi^{A,1}$ and $\xi^{A,2}$ meet and define, for $f \in \mathcal{F}_0(\Z)$, $x, y \in \lbrace -A, \ldots, A \rbrace$,
	\begin{align*}
	E^{A,\omega} f(x,y) = \Eq[x,y]{ f( \xi^{A,i}_{\tau_0^A} ) }.
	\end{align*}
	Then, since
	\begin{align*}
	\pi(\omega,x) j^A(\omega,x,z) = \pi(\omega,x+z) j^A(\omega,x+z,-z),
	\end{align*}
	for any $f, g \in \mathcal{F}_0(\Z)$,
	\begin{multline*}
	\sum_{x= -A}^{A} \sum_{z \in \{-1,1\}} \pi(\omega,x) j^A(\omega,x,z) E^{A,\omega} f(x+z,y) E^{A,\omega} g(x,y) \\ = \sum_{x=-A}^{A} \sum_{z \in \{ -1,1 \}} \pi(\omega,x) j^A(\omega,x,z) E^{A,\omega} f(x,y) E^{A,\omega} g(x+z,y).
	\end{multline*}
	As a result,
	\begin{multline} \label{ugly_sum}
	\sum_{x, y \in [-A,A]} \sum_{z \in \{-1,1\} } \pi(\omega,x)\pi(\omega,y) \Big[ j^A(\omega,x,z) \left( E^{A,\omega} f(x+z,y) - E^{A,\omega} f(x, y) \right) \\ + j^A(\omega,y,z) \left( E^{A,\omega} f(x, y+z) - E^{A,\omega} f(x,y) \right) \Big] E^{A,\omega} g(x,y) \\[1.3em] = \sum_{x, y \in [-A,A]} \sum_{z \in \{-1,1\} } \pi(\omega,x) \pi(\omega,y) E^{A,\omega} f(x,y) \Big[ j^A(\omega,x,z) \left( E^{A,\omega} g(x+z,y) - E^{A,\omega} g(x,y) \right) \\ + j^A(\omega,y,z) \left( E^{A,\omega} g(x, y+z) - E^{A,\omega} g(x,y) \right) \Big].
	\end{multline}
	But, by the Markov property and the definition of $ E^\omega $, for any $ x \neq y \in \lbrace -A, \ldots, A \rbrace $ and for any $ f \in \mathcal{F}_0(\Z) $,
	\begin{multline*}
	\sum_{z \in \{-1,1\}} \Big[ j^A(\omega,x,z) \left( E^{A,\omega} f(x+z,y) - E^{A,\omega} f(x,y) \right) \\ + j^A(\omega,y,z) \left( E^{A,\omega} f(x,y+z) - E^{A,\omega} f(x,y) \right) \Big] = 0.
	\end{multline*}
	As a result, the only non-zero terms in the sums appearing in \eqref{ugly_sum} are those for which $ x = y $.
	We thus obtain that, for any $ f, g \in \mathcal{F}_0(\Z)$, 
	\begin{multline*}
	\sum_{x = -A}^{A} \sum_{z \in \{-1, 1\} } \pi(\omega,x)^2 j^A(\omega,x,z) g(x) \left( E^{A,\omega} f(x+z, x) - f(x) \right) \\ = \sum_{x = -A}^{A} \sum_{z \in \{-1, 1\}} \pi(\omega,x)^2 j^A(\omega,x,z) f(x) \left( E^{A,\omega} g(x+z,x) - g(x) \right).
	\end{multline*}
	Taking $ f = \delta_y $ and $ g = \delta_x $, we obtain
	\begin{align*}
	\pi(\omega,x)^2 \sum_{z \in \{-1, 1\}} j^A(\omega,x,z) E^{A,\omega} \delta_y(x+z,x) = \pi(\omega,y)^2 \sum_{z \in \{-1, 1\}} j^A(\omega,y,z) E^{A,\omega} \delta_x(y+z,y).
	\end{align*}
	We now wish to let $A \to \infty$ in order to obtain \eqref{reversibility_two_particles}.
	Clearly, for $A$ large enough, $j^A(\omega,x,z) = j(\omega,x,z)$.
	Let $T_A$ be such that
	\begin{align*}
	T_A = \inf \lbrace t \geq 0 : \abs{\xi^{i}_t} > A \text{ for some } i \in \{ 1, 2 \} \rbrace.
	\end{align*}
	Then, since $ T_A \to \infty $ almost surely as $ A \to \infty $, $ \xi^{A,i}_{\tau_0^A} $ converges in distribution to $ \xi^i_{\tau_0} $.
	Hence $E^{A,\omega} \delta_y(x, x+z) \to E^\omega \delta_y(x, x+z)$ and \eqref{reversibility_two_particles} is proved.
\end{proof}

\subsection{Convergence to the Brownian flow with delayed coalescence} \label{subsec:proof_cvg_brownian_flow}

We now set out to prove Theorem~\ref{thm:cvg_brownian_flow}.
Recall the construction of $ (\A^{\omega,\lambda})_{t \geq 0} $ started from two lineages in Subsection~\ref{subsec:dual}.
In particular, recall that $ (\xi^1_t)_{t \geq 0} $ and $ (\xi^2_t)_{t \geq 0} $ are two independent random walks on $ \Z $ with transition rates given by \eqref{transition_rates} and
\begin{align*}
L(t) = \int_{0}^{t} \frac{\1{\xi^1_s = \xi^2_s}}{\lambda N(\omega, \xi^1_s)} ds,
\end{align*}
and that the two lineages coalesce at time $ T_c $ such that
\begin{align*}
T_c = \inf \{ t \geq 0 : L(t) > E \}
\end{align*}
where $ E $ is an independent exponential random variable with parameter 1.

For $ n\geq 1 $, we have set
\begin{align*}
\bm{\A}^n_t = \frac{1}{\sqrt{n}} \A^{\omega, \sqrt{n}}_{nt}.
\end{align*}
By Theorem~\ref{thm:functional_clt}, $ (\frac{1}{\sqrt{n}} \xi^i_{nt})_{t \geq 0} $ converges $ \mu $-almost surely in distribution in $ \sko{\R} $ to \Bm with variance $ \sigma^2 $ given by \eqref{parameters}.
Since $ \xi^1 $ and $ \xi^2 $ are independent, by the Skorokhod representation theorem, for $ \mu $-almost every $ \omega $, there exists a probability space on which two sequences of processes $ (\xi^{1,n}_t, \xi^{2,n}_t)_{t \in [0,T]} $ and two \Bm* $ (X^1_t, X^2_t)_{t \in [0,T]} $ are defined and such that
\begin{enumerate}[i)]
	\item for all $ n \geq 1 $, $ (\xi^{i,n}_t)_{t \in [0,T]} $ is distributed like $ (\xi^i_t)_{t \in [0,T]} $, for $ i \in \{1, 2\} $,
	\item $ \xi^{1,n} $ and $ \xi^{2,n} $ are independent for all $ n \geq 1 $,
	\item the sequence of processes $ (\frac{1}{\sqrt{n}} \xi^{i,n}_{nt})_{t\in [0,T]} $ converges to $ (X^i_t)_{t \in [0,T]} $ in $ \sko{\R} $ almost surely, for $ i \in \{1,2\} $.
\end{enumerate}

For $ n \geq 1 $, define
\begin{align*}
L_n(t) = \frac{1}{\sqrt{n}} \int_{0}^{nt} \frac{\1{\xi^{1,n}_s = \xi^{2,n}_s}}{N(\omega, \xi^{1,n}_s)} ds.
\end{align*}
Then $ (L_n(t))_{t\geq 0} $ is distributed like $ (L(nt))_{t \geq 0} $ (with $ \lambda = \sqrt{n} $).

\begin{lemma} \label{lemma:convergence_local_time}
	As $ n \to \infty $, for $ \mu $-almost every $ \omega \in \Omega $, $ (L_n(t), t \in [0,T]) $ converges in probability to $ ( \gamma L^0_t(X^1-X^2), t \in [0,T]) $ in the Skorokhod topology, where $ L^0_t(X^1-X^2) $ is the local time at zero of $ X^1-X^2 $ and $ \gamma $ is given by \eqref{parameters}.
\end{lemma}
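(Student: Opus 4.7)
The plan is to exploit the time-change setup of Subsection~\ref{subsec:two_particles} to rewrite $L_n(t)$ as an ergodic average along the auxiliary environment process $Y$, and then to identify the remaining time-change with a Brownian local time. Since $1/N(\omega,\xi^{1,n}_s) = h(T^{\xi^{1,n}_s}\omega)$ with $h(\omega) := 1/N(\omega)$, and since $T^{\xi^{1,n}_{L_0^{-1}(u)}}\omega = Y(u)$ by the very definition of $Y$, the change of variables $u = L_0(s)$ gives
\begin{align*}
L_n(t) = \frac{1}{\sqrt{n}} \int_0^{L_0(nt)} h(Y(u))\, du.
\end{align*}

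By Proposition~\ref{prop:ergodicity_environment_two_particles}, the process $Y$ is stationary and ergodic under the probability measure $\pi(\omega)^2\mu(d\omega)/\mean{\pi^2}$, which is equivalent to $\mu$ by uniform ellipticity, so the ergodic theorem applies for $\mu$-almost every starting $\omega$ and yields
\begin{align*}
\frac{1}{U}\int_0^U h(Y(u))\, du \xrightarrow[U\to\infty]{} \int_\Omega \frac{h(\omega)\pi(\omega)^2}{\mean{\pi^2}}\,\mu(d\omega) = \frac{\mean{N \Nm^2}}{\mean{N^2 \Nm^2}} = \gamma.
\end{align*}
Together with the divergence $L_0(nt) \to \infty$ in probability (which follows from a first-moment estimate via Theorem~\ref{thm:local_clt}), this shows $L_n(t) = \gamma \, L_0(nt)/\sqrt{n} + o(L_0(nt)/\sqrt{n})$ in probability.

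It then remains to prove that $L_0(nt)/\sqrt{n}$ converges in probability to $L^0_t(X^1-X^2)$, where $X^1,X^2$ are the limiting independent Brownian motions of variance $\sigma^2$. The strategy is to use Theorem~\ref{thm:local_clt} to compute
\begin{align*}
\frac{1}{\sqrt{n}}\Eq[\{x^n_1,x^n_2\}]{L_0(nt)} = \frac{1}{\sqrt{n}}\int_0^{nt}\sum_{y\in\Z} g^\omega_s(x^n_1,y)\,g^\omega_s(x^n_2,y)\,ds,
\end{align*}
and observe that the right-hand side converges to $\mathbb{E}[L^0_t(X^1-X^2)]$; a parallel two-point estimate yields $L^2$-tightness. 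Passing to a Skorokhod representation in which $(\xi^{i,n}_{n\cdot}/\sqrt{n})_{i=1,2}$ converges almost surely to $(X^i)_{i=1,2}$, one then brackets $L_0(nt)/\sqrt{n}$ between quantities of the form $\frac{1}{2\varepsilon}\int_0^t \1{|X^{1,n}_u-X^{2,n}_u|\leq\varepsilon}\,du$ for a double scale $\varepsilon \to 0$ with $\varepsilon\sqrt{n}\to\infty$: by Theorem~\ref{thm:local_clt} these bracketing quantities are asymptotically equivalent to $L_0(nt)/\sqrt{n}$, while by classical Brownian occupation-time theory they converge to $L^0_t(X^1-X^2)$ as $\varepsilon\to 0$. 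This identification is the main technical obstacle: the ergodic step is essentially routine once Proposition~\ref{prop:ergodicity_environment_two_particles} is in hand, but converting a quenched occupation time at a single site into a Brownian local time requires both the local CLT and a delicate double-scale argument. Finally, upgrading from convergence at fixed $t$ to convergence in the Skorokhod topology on $[0,T]$ is automatic from the monotonicity of $t\mapsto L_n(t)$ and continuity of the limit $t\mapsto \gamma L^0_t(X^1-X^2)$, via a Dini-type argument.
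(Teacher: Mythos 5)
Your decomposition $L_n(t) = \frac{1}{\sqrt n}\int_0^{L_0(nt)}h(Y(u))\,du$, the ergodic-theorem step via Proposition~\ref{prop:ergodicity_environment_two_particles}, and the computation of $\gamma$ all match the paper's proof. Where you genuinely diverge is in establishing $L_0(nt)/\sqrt n \to L^0_t(X^1-X^2)$: the paper asserts this almost-sure uniform convergence as an immediate consequence of the Skorokhod coupling of $(\xi^{i,n}_{n\cdot}/\sqrt n)_{i}$ with $(X^i)_{i}$, essentially as a continuous-mapping step, whereas you propose a Borodin-type proof via first-moment convergence from Theorem~\ref{thm:local_clt}, $L^2$ control from the heat-kernel estimates, and a double-scale comparison with window occupation times. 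You are right that this is the real technical content of the lemma --- occupation time at a single site is not a Skorokhod-continuous functional, so the paper's one-line assertion deserves exactly the kind of argument you outline --- and your Dini/monotonicity step at the end matches how the paper implicitly upgrades pointwise convergence to uniformity.

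Two caveats on your sketch. First, ``brackets'' is too strong a word: $\sqrt n\int_0^t\1{\xi^{1,n}_{nu}=\xi^{2,n}_{nu}}\,du$ is not pointwise sandwiched between window occupation measures $\frac{1}{2\varepsilon}\int_0^t\1{\left|X^{1,n}_u-X^{2,n}_u\right|\le\varepsilon}\,du$; a given trajectory may spend disproportionately much or little time exactly on the diagonal compared with its average over a window. What the local CLT together with Lemmas~\ref{lemma:bound_h} and~\ref{lemma:continuity_h} delivers is that the $L^2$-difference of the two quantities vanishes in the double-scale limit $\varepsilon\to 0$, $\varepsilon\sqrt n\to\infty$, so ``asymptotically equivalent'' is accurate but ``brackets'' suggests a pointwise inequality that fails. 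Second, feeding the random time $L_0(nt)$ into the ergodic average requires more than ``$L_0(nt)\to\infty$ in probability'': the rate at which $\frac{1}{U}\int_0^U h(Y^n(u))\,du$ enters a neighbourhood of $\gamma$ depends on the realisation of $Y^n$ and must be controlled uniformly over $t\in[0,T]$. The paper introduces a random burn-in threshold $t_0^n(\omega)$ and bounds the contribution of the event $\{L^0_n(nt)<t_0^n\}$ by $2K\,t_0^n/\sqrt n$, which tends to zero in probability because the $(t_0^n)_n$ are identically distributed. Both points are repairable, and the overall structure of your argument is sound.
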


\begin{proof}
	For $ t \geq 0 $ and $ n \geq 1 $, let
	\begin{align*}
	L^0_n(t) = \int_{0}^{t} \1{\xi^{1,n}_s = \xi^{2,n}_s} ds.
	\end{align*}
	Then $ (L^0_n(t), t \geq 0) $ is distributed like $ (L_0(t), t\geq 0) $ of \eqref{definition_L0}; hence
	\begin{align*}
	Y^n(t) = T^{\xi^{1,n}_{(L^{0}_n)^{-1}(t)}}\omega, t \geq 0,
	\end{align*}
	is distributed like $ (Y(t), t\geq 0) $.
	We can then write $ L_n(t) $ as
	\begin{align*}
	L_n(t) = \frac{1}{\sqrt{n}} L^0_n(nt) \frac{1}{L^0_n(nt)} \int_{0}^{L^0_n(nt)} \frac{1}{N(Y^n(s))} ds.
	\end{align*}
	Since $ (\frac{1}{\sqrt{n}} \xi^{i,n}_{nt}, t \in [0,T]) $ converges to $ (X^i_t)_{t \in [0,T]} $ almost surely in $ \sko{\R} $, $ (\frac{1}{\sqrt{n}} L^0_n(nt)$, $ t \in [0,T]) $ converges almost surely to $ (L^0_t(X^1-X^2), t \in [0,T]) $ in the uniform topology.
	Furthermore, by Proposition~\ref{prop:ergodicity_environment_two_particles} and by the pointwise ergodic theorem, for each $ n \geq 1 $,
	\begin{align*}
	\frac{1}{t} \int_{0}^{t} \frac{1}{N(Y^n(s))} ds \: \cvgas{t} \: \int_\Omega \frac{1}{N(\omega)} \frac{\pi(\omega)^2}{\mean{\pi^2}} \mu(d\omega) = \gamma \quad \mu(d\omega)-a.s.
	\end{align*}
	Fix $ \varepsilon > 0 $ and $ T > 0 $.
	For each $ n \geq 1 $, there exists a (random) $ t_0^n > 0 $ such that
	\begin{align*}
	\forall t \geq t_0^n, \quad \abs{\frac{1}{t} \int_{0}^{t} \frac{1}{N(Y^n(s))} ds - \gamma } \leq \varepsilon.
	\end{align*}
	Furthermore, there exists $ n_0 \geq 1 $ such that, for any $ n \geq n_0 $,
	\begin{align*}
	\sup_{t \in [0,T]} \abs{\frac{1}{\sqrt{n}} L^0_n(nt) - L^0_t(X^1-X^2)} \leq \varepsilon.
	\end{align*}
	As a result, for $ n \geq n_0 $,
	\begin{align*}
	\abs{L_n(t) - \gamma L^0_t(X^1-X^2)} &\leq \frac{1}{\sqrt{n}} L^0_n(nt) \left( \varepsilon + 2K \1{L^0_n(nt) < t_0^n} \right) + \gamma \varepsilon \\
	& \leq \varepsilon (L^0_t(X^1-X^2) + \varepsilon) + 2K \frac{1}{\sqrt{n}} t_0^n + \gamma \varepsilon.
	\end{align*}
	Since the $ (t^n_0)_{n \geq 1} $ are identically distributed, $ \frac{1}{\sqrt{n}} t^n_0 $ converges to 0 in probability as $ n \to \infty $, and the result follows.
\end{proof}

Let us now prove Theorem~\ref{thm:cvg_brownian_flow}.

\begin{proof}[Proof of Theorem~\ref{thm:cvg_brownian_flow}]
	Define, on the same probability space as the $ \xi^{i,n} $ an independent exponential random variable $ E $ with parameter 1, and set
	\begin{align*}
	T^n_c = \inf \{ t \geq 0 : L_n(t) > E \}.
	\end{align*}
	Then
	\begin{equation*}
	\A^n_t = \left\lbrace
	\begin{aligned}
	\left\lbrace \frac{1}{\sqrt{n}} \xi^{1,n}_{nt}, \frac{1}{\sqrt{n}} \xi^{2,n}_{nt} \right\rbrace & \text{ if } t < T^n_c, \\
	\left\lbrace \frac{1}{\sqrt{n}} \xi^{1,n}_{nt} \right\rbrace \hspace{40pt} & \text{ if } t \geq T^n_c,
	\end{aligned}
	\right.
	\end{equation*}
	is distributed as the process $ \bm{\A}^n $ of Theorem~\ref{thm:cvg_brownian_flow}.
	Lemma~\ref{lemma:convergence_local_time} implies
	\begin{align*}
	T^n_c \cvgas[\mathbb{P}^\omega]{n} T_c,
	\end{align*}
	where
	\begin{align*}
	T_c = \inf \{ t \geq 0 : \gamma L^0_t(X^1-X^2) > E \}.
	\end{align*}
	As a result, $ (\A^n_t, t \in [0,T]) $ converges in probability in $ \sko{\R \cup \R^2} $ to the Brownian flow with delayed coalescence with paramters $ (\sigma^2, \gamma) $.
	Hence, $ (\bm{\mathcal{A}}_t^n, t \in [0,T]) $ converges in distribution to the Brownian flow with delayed coalescence with these parameters.
\end{proof}

\section{Convergence to the stochastic heat equation with Wright-Fisher noise} \label{sec:proof_cvg_SHE}

\subsection{Proof of the main result} \label{subsec:proof_main_result}

Let us now prove the convergence of the rescaled stepping stone model in a random environment to the stochastic heat equation with Wright-Fisher noise.
The proof follows a classical approach: we prove that the sequence $ (\bm{p}^n_t(\omega,\cdot), t \in [0,T]) $ is tight in the space of continuous $ \Xi $-valued functions $ \mathcal{C}([0,T], \Xi) $ and we use the duality relation (Proposition~\ref{prop:duality}) together with Theorem~\ref{thm:cvg_brownian_flow} to identify the limit.

\begin{lemma}[Tightness of the sequence] \label{lemma:tightness}
	For all $ T > 0 $, the sequence $ (\bm{p}^n_t(\omega, \cdot), t \in [0,T]) $ defined in Subsection~\ref{subsec:main_result} is tight in $ \mathcal{C}([0,T], \Xi) $, $ \mu(d\omega) $-almost surely.
\end{lemma}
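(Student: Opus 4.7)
The plan is to reduce tightness in $\mathcal{C}([0,T],\Xi)$ to one-dimensional tightness of the real processes $(\langle\bm{p}^n_t,\phi\rangle)_{t\in[0,T]}$ for each fixed $\phi\in\mathcal{C}^\infty_c(\R)$, and then to apply Kolmogorov's criterion after an Itô decomposition of the latter. The reduction is standard: because $d$ is defined from the countable separating family $(\phi_n)$, and because $|\langle\bm{p}^n_t,\phi\rangle|\le C(\phi)$ uniformly in $t,n,\omega$ (since $0\le p^n\le 1$ and the support of $\phi(\cdot/\sqrt{n})$ contains $O(\sqrt{n})$ lattice points), the compact-containment condition in the vague topology is automatic, and tightness in $\mathcal{C}([0,T],\Xi)$ follows from tightness in $\mathcal{C}([0,T],\R)$ of each coordinate.

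Fix such a $\phi$. Writing the Itô decomposition of $\langle\bm{p}^n_t,\phi\rangle$ coming from \eqref{stepping_stone_equation} and \eqref{integral_pn} gives
\[
\langle\bm{p}^n_t,\phi\rangle=\langle\bm{p}^n_0,\phi\rangle+\int_0^t A^n_u\,du+M^n_t,\qquad A^n_u=\sqrt{n}\sum_{y\in\Z}\phi(y/\sqrt{n})\,L^\omega p^n_{nu}(y),
\]
with $L^\omega$ the generator of the dual RWRE from Definition~\ref{def:dual}, and $M^n$ a continuous martingale with
\[
\langle M^n\rangle_t=\frac{1}{\sqrt{n}}\sum_y\phi(y/\sqrt{n})^2\int_0^t\frac{p^n_{nu}(y)(1-p^n_{nu}(y))}{N(\omega,y)}\,du.
\]
Using $p(1-p)\le\tfrac{1}{4}$, the uniform ellipticity~\eqref{uniform_ellipticity}, and Riemann-sum convergence, $\langle M^n\rangle_t-\langle M^n\rangle_s\le C(\phi)(t-s)$ uniformly in $n,\omega$, so the Burkholder--Davis--Gundy inequality yields $\mathbb{E}^\omega[(M^n_t-M^n_s)^4]\le C(t-s)^2$, which is more than enough for Kolmogorov tightness of the martingale part.

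The real work is controlling the drift. After a discrete integration by parts, $A^n_u=\sqrt{n}\sum_y p^n_{nu}(y)(L^\omega)^*\psi_n(y)$ with $\psi_n(y)=\phi(y/\sqrt{n})$, and a Taylor expansion of $\psi_n$ splits $(L^\omega)^*\psi_n$ into a diffusive piece of size $O(n^{-1}\phi'')$ plus an ``anomalous'' piece of size $O(1)$ per site, coming from the fact that $L^\omega$ is not self-adjoint with respect to counting measure. This anomalous piece has the form $b(\omega,y)\phi(y/\sqrt{n})$ with $b$ a bounded, stationary, mean-zero function of the environment; crucially, it can be written as a discrete gradient $b(\omega,y)=c(T^{y-1}\omega)-c(T^{y}\omega)$. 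I would then perform a second Abel summation to transfer a derivative onto the product $p^n_{nu}\psi_n$, which gains the missing factor $1/\sqrt{n}$. Taking second moments, I would express $\mathbb{E}^\omega[(\int_s^t A^n_u\,du)^2]$ via the duality of Proposition~\ref{prop:duality} as a double integral against the heat kernel $g^\omega$, and then apply the Gaussian bounds proved in Appendix~\ref{appendix:heat_kernel_estimates} together with the uniform H\"older estimate~\eqref{holder_initial_condition} to get $\mathbb{E}^\omega[(\int_s^t A^n_u\,du)^2]\le C(t-s)^2$. Combining this with the martingale bound, Kolmogorov's criterion concludes the proof of tightness.

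The main obstacle is precisely this drift control: the per-site $O(1)$ term produced by the non-self-adjointness of $L^\omega$ cannot be absorbed into the diffusive scale by any naive bound, and keeping it of order one after multiplication by $\sqrt{n}$ requires simultaneously its gradient representation, the ergodic mean-zero structure of the environment, and the heat-kernel estimates that describe how $p^n$ is smoothed out at the diffusive scale.
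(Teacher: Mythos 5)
Your reduction to one-dimensional tightness (compact containment plus Ethier–Kurtz) and your treatment of the martingale part are essentially the paper's, and your diagnosis of where the difficulty lies — the drift is not self-adjoint with respect to counting measure, so the test function $\psi_n=\phi(\cdot/\sqrt{n})$ does not stay within the diffusive scale when you pass the generator across the pairing — is exactly right. But the remedy you propose does not close the gap.

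There are two separate problems with the direct Abel-summation scheme. First, when you Taylor-expand $(L^\omega)^*\psi_n$ around $\psi_n(x)$ you produce not only a zeroth-order piece $\psi_n(x)\,(L^\omega)^*\mathbf{1}(x)$ of size $O(1)$ per site, but also a first-order piece $\sim n^{-1/2}\phi'(x/\sqrt{n})\,b_1(\omega,x)$ whose environment coefficient $b_1$ is neither obviously mean-zero nor a gradient; after the overall $\sqrt{n}$ prefactor this contributes $O(\sqrt{n})$ and is not addressed by your gradient representation, which (correctly) applies only to $(L^\omega)^*\mathbf{1}$. Second, even for the zeroth-order piece, the Abel summation transfers a difference onto $p^n_{nu}\psi_n$, but the increments of $p^n_{nu}$ are only controlled at H\"older exponent $\beta\in(0,2/15)$ (Lemma~\ref{lemma:holder_pn}), so $|p^n_{nu}(x+1)-p^n_{nu}(x)| = O(n^{-\beta/2})$, not $O(n^{-1/2})$. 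The naive bound on the drift is therefore $O(n^{1-\beta/2})$, not $O(1)$, and it is not clear that taking second moments via duality and heat-kernel bounds rescues this: duality expresses $\E{\nabla p(x)\nabla p(y)}$ in terms of dual lineages, but there is no manifest cancellation in the sum over $x,y$ weighted by the gradient coefficient, and the target $(t-s)^2$ is not established.

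The paper avoids both issues at once by introducing the modified observable
\[
\langle\tilde{p}^n_t,\phi\rangle=\frac{1}{\sqrt{n}}\sum_{x}\pi(\omega,x)\,p^n_{nt}(\omega,x)\,\phi\!\left(\frac{F(\omega,x)}{c\sqrt{n}}\right),
\]
where $\pi$ is the reversible measure of the walk and $F$ is the harmonic coordinate (the corrector of \eqref{definition_F}, satisfying \eqref{martingale_equation_F}). The $\pi$-weight turns the discrete summation by parts into an exact identity (reversibility), and the harmonicity of $F$ kills the first-order Taylor term identically; the leftover is a second-order remainder, uniformly bounded by \eqref{uniform_ellipticity}. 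This gives uniformly bounded drift and quadratic variation and hence Aldous–Rebolledo tightness for $\tilde{p}^n$. The H\"older estimate of Lemma~\ref{lemma:holder_pn} is not used to control the drift at all; it is used, together with the ergodic averaging of Lemma~\ref{lemma:uniform_cvg_pi}, only in the final comparison step to show $\sup_{t\le T}|\langle\tilde p^n_t,\phi\rangle-\langle\bm p^n_t,\phi\rangle|\to 0$, where $p$-increments are taken over windows of length $\delta\sqrt{n}$ and the resulting bound $\delta^\beta$ is made small by choosing $\delta$ (not $n$) small. In short, you have correctly located the obstacle but underestimated it; some device like the corrector and the $\pi$-reweighting is needed to make the drift genuinely $O(1)$ before any moment or heat-kernel estimate can be brought to bear.
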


We prove this lemma in the next subsection.
For now, let us conclude the proof of Theorem~\ref{thm:cvg-SHE-WF-noise}.

\begin{proof}[Proof of Theorem~\ref{thm:cvg-SHE-WF-noise}]
	Let $ (\bm{p}^{n_j})_{j \geq 1} $ be a subsequence of $ (\bm{p}^n)_{n\geq 1} $ converging in distribution to $ \bm{p}^\infty \in \mathcal{C}([0,T], \Xi) $.
	In view of the definition of $ \bm{p}^n $ and $ \bm{\A}^n $, by Proposition~\ref{prop:duality} and \eqref{integral_pn}, for all $ n \geq 1 $, $ k \geq 1 $ and $ \phi_1, \ldots, \phi_k \in \mathcal{C}^\infty_c(\R) $,
	\begin{align} \label{duality_pn}
	\Eq[\bm{p}^n_0]{\prod_{i=1}^{k} \dual{\bm{p}^n_t}{\phi_i} } = \frac{1}{n^{k/2}} \sum_{(x_1, \ldots, x_k) \in \Z^k} \Eq[\left\lbrace \frac{x_1}{\sqrt{n}}, \ldots, \frac{x_k}{\sqrt{n}} \right\rbrace]{ \prod_{i=1}^{N_t} \bm{p}_0^n(\xi^{i,n}_t) } \phi_1\left( \frac{x_1}{\sqrt{n}} \right) \ldots \phi_k\left( \frac{x_k}{\sqrt{n}} \right),
	\end{align}
	where $ \bm{\A}^n_t = \{ \xi^{1,n}_t, \ldots, \xi^{N_t,n}_t \} $.
	Furthermore, for all $ n \geq 1 $, $ t \geq 0 $, 
	\begin{align} \label{bound_pn}
	\abs{ \langle \bm{p}^n_t, \phi \rangle } \leq \frac{1}{\sqrt{n}} \sum_{x \in \Z} \abs{\phi \left( \frac{x}{\sqrt{n}} \right)}.
	\end{align}
	Since the right hand side is bounded uniformly in $ n $ for any $ \phi \in \mathcal{C}^\infty_c(\R) $, the set
	\begin{align} \label{compact_containment}
	\left\lbrace p \in \Xi : \abs{\dual{p}{\phi}} \leq \sup_{k \geq 1} \frac{1}{\sqrt{k}} \sum_{x \in \Z} \abs{\phi \left( \frac{x}{\sqrt{k}} \right)}, \forall \phi \in \mathcal{C}^\infty_c(\R) \right\rbrace.
	\end{align}
	is strongly bounded, hence compact in $ \Xi $ for the vague topology by the Banach-Alaoglu theorem.
	As a consequence $ \bm{p}^\infty $ also takes values in this subset.
	Since $ p \mapsto \dual{p} $ is continuous and bounded on this subset for any smooth and bounded $ \phi : \R \to \R $, we can let $ n = n_j \to \infty $ in the left hand side of \eqref{duality_pn}.
	The right hand side can also be written
	\begin{align*}
	\int_{\R^k} \Eq[\left\lbrace \frac{\lfloor \sqrt{n} x_1 \rfloor}{\sqrt{n}}, \ldots, \frac{\lfloor \sqrt{n} x_k \rfloor}{\sqrt{n}} \right\rbrace]{ \prod_{i = 1}^{N_t} \bm{p}_0^n(\xi^{i,n}_t) } \phi_1 \left( \frac{\lfloor \sqrt{n} x_1 \rfloor}{\sqrt{n}} \right) \ldots \phi_k \left( \frac{\lfloor \sqrt{n} x_k \rfloor}{\sqrt{n}} \right) dx_1 \ldots dx_k.
	\end{align*}
	Since $ \prod_{i = 1}^{N_t} \bm{p}_0^n(\xi^{i,n}_t) \leq 1 $, we can use Theorem~\ref{thm:cvg_brownian_flow} and the dominated convergence theorem for $ k \in \{1, 2\} $ to obtain
	\begin{align} \label{duality_pinfty}
	\Eq[\bm{p}_0]{\prod_{i=1}^{k} \dual{\bm{p}^\infty_t}{\phi_i} } = \int_{\R^k} \Eq[\{ x_1, x_k \}]{ \prod_{i=1}^{N_t} \bm{p}_0(X^{i}_t) } \phi_1(x_1) \phi_k(x_k) dx_1 dx_k,
	\end{align}
	where $ \mathcal{D}_t = \{ X^1_t, X^{N_t}_t \} $ is the Brownian flow with delayed coalescence with parameters $ \sigma^2 $ and $ \gamma $, started from either $ \{ x_1 \} $ if $ k = 1 $ or $ \{ x_1, x_2 \} $ if $ k = 2 $.
	Note that, passing to the limit in \eqref{bound_pn} and since $ p^n_t \geq 0 $, we have $ 0 \leq \langle \bm{p}^\infty_t, \phi \rangle \leq \int_\R \phi(x) dx $ for all $ \phi \geq 0 $.
	We have thus proved that $ (\bm{p}^\infty_t, t \geq 0) $ satisfies the conditions of Proposition~\ref{prop:duality_brownian_flow}, hence it is a solution to the martingale problem \eqref{martingale_pb_SHE} and the proof of Theorem~\ref{thm:cvg-SHE-WF-noise} is complete.
\end{proof}

\subsection{Tightness of the sequence} \label{subsec:tightness}

Since the sequence of processes $ (\bm{p}^n_t(\omega,\cdot), t \in [0,T]) $ takes values in a compact subspace of $ \Xi $ almost surely (see \eqref{compact_containment} above), to prove that it is tight in $ \mathcal{C}([0,T], \Xi) $ it is enough to show that for any $ \phi \in \mathcal{C}^\infty_c(\R) $ and $ f \in C^2(\R) $, the sequence of real-valued processes $ ( f(\dual{\bm{p}^n_t}), t\in [0,T]) $ is tight in $ \mathcal{C}([0,T],\R) $ \citep[Theorem~3.9.1]{ethier_markov_1986}.
To do so, we want to use the Aldous-Rebolledo criterion and show that both the finite variation part and the quadratic variation part of $ ( f(\dual{\bm{p}^n_t}), t\in [0,T]) $ are tight.
In fact, we show that this is true for another process $ ( f(\dual{\tilde{p}^n_t}), t\in [0,T]) $, defined below, and we show that the difference between the two processes vanishes as $ n \to \infty $.
Let us first state two lemmas.

\begin{lemma} \label{lemma:holder_pn}
	There exists $ \beta \in (0,1) $ such that for all $ T > 0$ there exist random variables $ (C_n, n\geq 1) $ such that, for all $n \geq 1$,
	\begin{align*}
	\sup_{t \in [0,T]} \sup_{x, y \in \Z} n^{\beta/2} \frac{\abs{p^n_{nt}(\omega,x) - p^n_{nt}(\omega,y)}}{\abs{x - y}^\beta} \leq C_n \quad \text{ almost surely,}
	\end{align*}
	and there exists $ C > 0 $ such that, for all $n \geq 1$, $ \Eq{C_n^2} \leq C $, $\mu(d\omega)$-almost surely.
\end{lemma}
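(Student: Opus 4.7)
The plan is to use the mild formulation of the stepping stone equation to decompose $p^n_{nt}(\omega,x)$ into a deterministic part involving the transition kernel of the random walk $\xi$ of Definition~\ref{def:dual} and a martingale, and to control each piece via heat kernel estimates and the Burkholder-Davis-Gundy inequality. By Duhamel's principle applied to \eqref{stepping_stone_equation} with $\lambda=\sqrt n$,
\begin{align*}
p^n_{t}(\omega,x) = \sum_{z\in\Z} g^\omega_{t}(x,z)\,p^n_0(z) + \int_0^{t}\sum_{z\in\Z} g^\omega_{t-s}(x,z)\sqrt{\frac{p^n_s(\omega,z)(1-p^n_s(\omega,z))}{\sqrt n\,N(\omega,z)}}\,dB^z_s ,
\end{align*}
which writes $p^n_{nt}(x)-p^n_{nt}(y)=A^n_t(x,y)+M^n_t(x,y)$ with $A^n_t$ deterministic and $M^n_t$ a martingale in $t$.

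For the deterministic term, Proposition~\ref{prop:duality} identifies $A^n_t(x,y)=\Eq[x]{p^n_0(\xi_{nt})}-\Eq[y]{p^n_0(\xi_{nt})}$, and the uniform Hölder assumption \eqref{holder_initial_condition} combined with the heat kernel regularity estimates of Appendix~\ref{appendix:heat_kernel_estimates} yields $|A^n_t(x,y)|\le C |x-y|/\sqrt n$ uniformly in $t\in[0,T]$. For the martingale term, the Burkholder-Davis-Gundy and Doob maximal inequalities give, for every $q\ge1$,
\begin{align*}
\Eq{\sup_{t\in[0,T]}|M^n_t(x,y)|^{2q}}\le C_q\,\Eq{\langle M^n(x,y)\rangle_{nT}^{\,q}},
\end{align*}
and, using \eqref{uniform_ellipticity} together with $p(1-p)\le 1/4$, the predictable quadratic variation is dominated by
\begin{align*}
\frac{K}{4\sqrt n}\int_0^{nT}\sum_{z\in\Z}\bigl(g^\omega_{nT-s}(x,z)-g^\omega_{nT-s}(y,z)\bigr)^2 ds.
\end{align*}
The key input from Appendix~\ref{appendix:heat_kernel_estimates} is the quenched gradient estimate
\begin{align*}
\int_0^{\infty}\sum_{z\in\Z}\bigl(g^\omega_s(x,z)-g^\omega_s(y,z)\bigr)^2 ds\;\le\;C(\omega)\,|x-y|,
\end{align*}
with $C(\omega)<+\infty$ $\mu$-almost surely and uniformly in $x,y,n$. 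Combining these bounds gives, for every $q\ge 1$,
\begin{align*}
\Eq{\sup_{t\in[0,T]}|p^n_{nt}(\omega,x)-p^n_{nt}(\omega,y)|^{2q}}\;\le\;C_q(\omega)\Bigl(\tfrac{|x-y|}{\sqrt n}\Bigr)^{q},
\end{align*}
uniformly in $n$ and in $x,y\in\Z$.

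Applying Kolmogorov-Chentsov's continuity criterion for $q$ large enough to the $\mathcal{C}([0,T],\R)$-valued random field $\tilde x\mapsto \bm p^n_\cdot(\omega,\tilde x)=p^n_{n\cdot}(\omega,\sqrt n\,\tilde x)$ on $\R$, equipped with the uniform-in-$t$ norm, then produces a spatial Hölder modulus of any exponent $\beta<1/2$. The extension of this modulus from compact sets to the whole lattice uses the translation invariance of $\mu$ in conjunction with the a priori bound $|p^n_{nt}|\le 1$, which renders the trivial estimate sharp as soon as $|x-y|\gtrsim \sqrt n$ (so that $n^{\beta/2}/|x-y|^{\beta}\le 1$); the Hölder constant $C_n$ thus obtained satisfies $\Eq{C_n^2}\le C$ uniformly in $n$ $\mu$-almost surely.

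The main obstacle is the derivation of the gradient estimate on the heat kernel $g^\omega$ with a $\mu$-almost surely finite constant that does not depend on $n$ or $x,y$. This is the quenched analogue of classical Nash-Aronson type gradient bounds, and relies crucially on the uniform ellipticity assumption \eqref{uniform_ellipticity}. These estimates are precisely what is established in Appendix~\ref{appendix:heat_kernel_estimates}, and once they are granted the remainder of the argument is an essentially standard application of mild formulation, BDG and Kolmogorov-Chentsov's criterion.
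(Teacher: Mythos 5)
Your decomposition into a mild‑form deterministic term $A^n_t$ plus a martingale $M^n_t$, and the plan of bounding each and then applying BDG and Kolmogorov's criterion, matches the paper's strategy. However, the two key estimates you invoke are substantially stronger than what the paper actually proves in Appendix~\ref{appendix:heat_kernel_estimates}, and neither is justified by the ingredients you cite.

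First, the claim that $|A^n_t(x,y)|\leq C|x-y|/\sqrt{n}$ uniformly in $t\in[0,T]$ does not follow from \eqref{holder_initial_condition} together with the heat‑kernel estimates. This Lipschitz propagation is automatic when the kernel is a convolution (homogeneous case), but in a random environment $g^\omega_t(x,\cdot)$ and $g^\omega_t(y,\cdot)$ are not translates of one another. What you can actually extract from \eqref{holder_initial_condition} and the bound on $\Eq[x]{|\xi_{nt}-x|}$ (Lemma~\ref{lemma:bound_variance_RW}) is $|A^n_t(x,y)|\leq C\sqrt{t}+C|x-y|/\sqrt{n}$, which does \emph{not} reduce to $C|x-y|/\sqrt{n}$ for $t$ of order $1$ and $|x-y|$ small. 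The paper has to combine this with a second bound (coming from Lemma~\ref{lemma:continuity_g_omega} and the tail estimate of Lemma~\ref{lemma:bound_variance_RW}, namely $|A^n_t(x,y)|\leq C(1+t^{-3/4})|\tfrac{x-y}{\sqrt n}|^{1/3}$) and optimize over a threshold in $t$ to obtain only a H\"older‑$2/15$ modulus for $A^n_t$, not Lipschitz.

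Second, the quenched gradient estimate you rely on, $\int_0^{\infty}\sum_{z}(g^\omega_s(x,z)-g^\omega_s(y,z))^2\,ds\leq C(\omega)|x-y|$, is \emph{not} established in Appendix~\ref{appendix:heat_kernel_estimates}, despite your assertion that it is. Lemma~\ref{lemma:continuity_g_omega} gives $\sum_z\int_0^{T}(g^\omega_s(x,z)-g^\omega_s(y,z))^2\,ds\leq C\,T^{1/4}|x-y|^{1/2}$, which grows with $T$ and carries exponent $1/2$ in $|x-y|$, not $1$. Since the proof of that lemma only yields the pointwise decay $\sum_z(g^\omega_s(x,z)-g^\omega_s(y,z))^2\leq Cs^{-3/4}|x-y|^{1/2}$, which is not integrable at $s=\infty$, a finite‑$\int_0^\infty$ bound (let alone the Lipschitz version) would require a genuinely different and sharper large‑time argument (of the order $s^{-3/2}$, consistent with the Gaussian scaling), and nothing of the sort appears in the appendix. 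Using the actual Lemma~\ref{lemma:continuity_g_omega} in the BDG step gives a moment bound $\leq C\,t^{k/4}|\tfrac{x-y}{\sqrt n}|^{k/2}$, not $|\tfrac{x-y}{\sqrt n}|^{q}$. The net effect is that your argument, if completed, would deliver a H\"older exponent approaching $1/2$, whereas the paper's estimates only support $\beta<2/15$; the gap is precisely these two unavailable inequalities, and the paper's more laborious two‑regime interpolation for $A^n_t$ is what makes the proof go through with the weaker inputs.
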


Lemma~\ref{lemma:holder_pn} is proved in Subsection~\ref{subsec:holder_estimate}.
Recall the definition of $ \pi $ in \eqref{reversible_measure}.
For $ n \geq 1 $, $\delta > 0$ and $ x \in \frac{1}{\sqrt{n}} \Z $, set
\begin{align} \label{definition_Pi}
\Pi^{n,\delta}(\omega,x) = \frac{1}{2 \delta \sqrt{n}} \sum_{k \in B(\sqrt{n} x, \delta \sqrt{n}) \cap \Z} \pi(\omega,k)
\end{align}
and extend $ x \mapsto \Pi^{n,\delta}(\omega,x) $ to $ \R $ by linear interpolation.

\begin{lemma} \label{lemma:uniform_cvg_pi}
	For any $\delta > 0$, $ \Pi^{n,\delta} $ converges to 1 uniformly on compact sets as $ n \to \infty $, $ \mu(d\omega) $-almost surely.
\end{lemma}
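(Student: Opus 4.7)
The plan is to combine Birkhoff's ergodic theorem (for pointwise convergence at each $x$) with a uniform Lipschitz estimate on $\Pi^{n,\delta}(\omega,\cdot)$ coming from uniform ellipticity (for equicontinuity), and to conclude via a standard covering argument.

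First I would observe that, by the definition \eqref{reversible_measure} of $\pi$, one has $\mean{\pi}=1$, and that the $\mathbb{Z}$-action $(T^x)_{x\in\mathbb{Z}}$ is ergodic on $(\Omega,\mu)$ by assumption (c). Birkhoff's ergodic theorem applied to $T$ and $T^{-1}$ then gives, for $\mu$-a.e.\ $\omega$,
\begin{align*}
\lim_{M\to\infty}\frac{1}{M}\sum_{k=0}^{M-1}\pi(T^k\omega)=1=\lim_{M\to\infty}\frac{1}{M}\sum_{k=-M}^{-1}\pi(T^k\omega).
\end{align*}
For a fixed $y\in\mathbb{R}$, set $a_n=\lceil\sqrt{n}(y-\delta)\rceil$ and $b_n=\lfloor\sqrt{n}(y+\delta)\rfloor$, so that $b_n-a_n=2\delta\sqrt{n}+O(1)\to\infty$. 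Writing $\sum_{k=a_n}^{b_n}\pi(T^k\omega)$ as a difference of two partial sums of the form above (splitting at $0$ if $a_n<0<b_n$), the ergodic limits yield $\Pi^{n,\delta}(\omega,y)\to 1$ $\mu$-a.s.\ for every fixed $y$.

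To upgrade this to uniform convergence on compact sets, I would use the deterministic upper bound $\pi(\omega)\leq C_0:=3K^2/\mean{N\Nm}$ that follows from \eqref{uniform_ellipticity}. For $x,y\in\frac{1}{\sqrt{n}}\mathbb{Z}$, the symmetric difference of the lattice intervals $B(\sqrt{n}x,\delta\sqrt{n})\cap\mathbb{Z}$ and $B(\sqrt{n}y,\delta\sqrt{n})\cap\mathbb{Z}$ contains at most $2\sqrt{n}|x-y|+2$ points, which gives
\begin{align*}
|\Pi^{n,\delta}(\omega,x)-\Pi^{n,\delta}(\omega,y)|\leq \frac{C_0|x-y|}{\delta}+\frac{C_0}{\delta\sqrt{n}},
\end{align*}
and this estimate extends to all $x,y\in\mathbb{R}$ by the linear interpolation. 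Hence $\Pi^{n,\delta}(\omega,\cdot)$ is Lipschitz on $\mathbb{R}$ with a constant $2C_0/\delta$ uniformly in $n$, for $n$ sufficiently large.

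To conclude, fix $R>0$ and $\varepsilon>0$, cover $[-R,R]$ by a finite grid $\{y_1,\dots,y_J\}$ of mesh $\eta=\varepsilon\delta/(4C_0)$, and apply the pointwise statement at each $y_j$ to obtain a full-measure event on which $\max_{1\leq j\leq J}|\Pi^{n,\delta}(\omega,y_j)-1|\leq\varepsilon/2$ for $n$ large. The equicontinuity estimate then yields $\sup_{x\in[-R,R]}|\Pi^{n,\delta}(\omega,x)-1|\leq\varepsilon$, and intersecting over a countable family of parameters (e.g.\ $R\in\mathbb{N}$, $\varepsilon=1/m$) produces a single $\mu$-full-measure set on which uniform convergence holds on every compact. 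I do not expect a serious obstacle here: the only minor subtlety is the treatment of summation intervals that straddle $0$, which is dispatched by splitting Birkhoff's theorem into its forward and backward parts.
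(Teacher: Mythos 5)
Your proposal is correct and follows essentially the same route as the paper: Birkhoff's ergodic theorem gives pointwise convergence $\Pi^{n,\delta}(\omega,x)\to 1$, uniform ellipticity gives a Lipschitz bound $|\Pi^{n,\delta}(\omega,x)-\Pi^{n,\delta}(\omega,y)|\leq (K^4/\delta)|x-y|$ (hence equicontinuity), and the two together yield uniform convergence on compacts. The only cosmetic difference is that the paper invokes Arzel\`a-Ascoli at the last step whereas you run the explicit $\varepsilon$-net argument, which makes the deduction from pointwise-plus-equicontinuity more transparent but is mathematically the same.
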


\begin{proof}
	By the pointwise ergodic theorem, for each $ x \in \R $ and $ \delta > 0 $,
	\begin{align*}
	\Pi^{n,\delta}(\omega,x) \cvgas{n} 1 \quad \mu(d\omega)-\text{almost surely.}
	\end{align*}
	Moreover, for any $ x, y \in \R $, by \eqref{uniform_ellipticity},
	\begin{align*}
	\abs{ \Pi^{n,\delta}(\omega,x) - \Pi^{n,\delta}(\omega,y) } \leq \frac{K^4}{\delta} \abs{x-y}.
	\end{align*}
	Hence $ x \mapsto \Pi^{n,\delta}(\omega,x) $ is equicontinuous $\mu(d\omega)$-almost surely.
	By Arzela-Ascoli's theorem, it converges uniformly on compact sets.
\end{proof}

We can now prove the tightness of the sequence.

\begin{proof}[Proof of Lemma~\ref{lemma:tightness}]
	For $ n \geq 1 $, define a $\Xi$-valued process $ ( \tilde{p}^n_t, t \geq 0 ) $ by
	\begin{align*}
	\dual{\tilde{p}^n_t} = \frac{1}{\sqrt{n}} \sum_{x \in \Z} \, \pi(\omega, x) \, p^n_{nt}(\omega, x) \, \phi \left( \frac{F(\omega, x)}{c \sqrt{n}} \right),
	\end{align*}
	where $ F(\omega, x) $ was defined in \eqref{definition_F} and $ c $ was defined just above \eqref{bound_F_1}.
	We first show that for any $ \phi \in \mathcal{C}^\infty_c (\R) $ and $ f \in C^2(\R) $ the sequence $ ( f(\dual{\tilde{p}^n_t}), t \in [0,T] ) $ is tight and then we show that the difference between $ \dual{\bm{p}^n_t} $ and $\dual{\tilde{p}^n_t} $ vanishes as $n \to \infty$.
	
	\paragraph*{Tightness of $ ( f(\dual{\tilde{p}^n_t}), t\in [0,T]) $}
	
	By the definition of $p^n$ and \eqref{stepping_stone_equation},
	\begin{multline*}
	d \dual{\tilde{p}^n_t} = n \frac{m}{\sqrt{n}} \sum_{x \in \Z} \sum_{z \in \{-1,1\} } \pi(\omega,x) j(\omega,x,z) \left( p^n_{nt}(x+z) - p^n_{nt}(x) \right) \phi \left( \frac{F(\omega, x)}{c \sqrt{n}} \right) dt \\ + \frac{1}{\sqrt{n}} \sum_{x \in \Z} \pi(\omega,x) \phi \left( \frac{F(\omega, x)}{c \sqrt{n}} \right) \sqrt{ \frac{\sqrt{n}}{N(\omega,x)} p^n_{nt}(x) (1 - p^n_{nt}(x) ) } d B^x_t,
	\end{multline*}
	where $j(\omega,x,z)$ was defined in \eqref{definition_pz} and $ ( B^x, x \in \Z ) $ is a family if independent Brownian motions.
	Note that, by the reversibility of $ \pi $,
	\begin{multline} \label{tightness_1}
	n \frac{m}{\sqrt{n}} \sum_{x \in \Z} \sum_{z \in \{-1,1\} } \pi(\omega,x) j(\omega,x,z) \left( p^n_{nt}(x+z) - p^n_{nt} (x) \right) \phi \left( \frac{F(\omega, x)}{c \sqrt{n}} \right) \\ = n \frac{m}{\sqrt{n}} \sum_{x \in \Z} \sum_{z \in \{-1,1\} } \pi(\omega,x) j(\omega,x,z) p^n_{nt}(x) \left( \phi \left( \frac{F(\omega, x + z)}{c\sqrt{n}} \right) - \phi \left( \frac{F(\omega, x)}{c \sqrt{n}} \right) \right).
	\end{multline}
	By Taylor's theorem,
	\begin{multline*}
	\phi \left( \frac{F(\omega, x + z)}{c\sqrt{n}} \right) - \phi \left( \frac{F(\omega, x)}{c \sqrt{n}} \right) = \phi' \left( \frac{F(\omega, x)}{c \sqrt{n}} \right) \frac{F(\omega, x+z) - F(\omega, x)}{c \sqrt{n}} \\ + R\left( \frac{F(\omega, x)}{c \sqrt{n}}, \frac{F(\omega, x+z)}{c \sqrt{n}} \right) \frac{( F(\omega, x+z) - F(\omega, x) )^2}{c^2 n},
	\end{multline*}
	where
	\begin{align*}
	R(x,y) = \int_0^1 (1-t) \phi''(x + t(y-x) ) dt.
	\end{align*}
	In view of \eqref{martingale_equation_F}, equation \eqref{tightness_1} equals
	\begin{align*}
	\frac{m}{c^2 \sqrt{n}} \sum_{\substack{x \in \Z \\ z \in \{-1,1\}}} \pi(\omega,x) j(\omega,x,z) p^n_{nt}(x) R\left( \frac{F(\omega, x)}{c \sqrt{n}}, \frac{F(\omega, x+z)}{c \sqrt{n}} \right) ( F(\omega, x+z) - F(\omega, x) )^2.
	\end{align*}
	Since $ \abs{p^n_{nt}(x)} \leq 1 $ and by assumption \eqref{uniform_ellipticity}, this is smaller than
	\begin{align*}
	\frac{C}{\sqrt{n}} \sum_{x \in \Z} \sum_{z \in \{-1,1\} } \abs{R\left( \frac{F(\omega, x)}{c \sqrt{n}}, \frac{F(\omega, x+z)}{c \sqrt{n}} \right)}
	\end{align*}
	for some constant $ C > 0 $.
	In turn, the above expression is uniformly bounded as $ n \to \infty $ since $\phi \in \mathcal{C}^\infty_c (\R)$ (also note \eqref{bound_F_3}).
	We have thus proved the existence of a constant $ C_1 > 0 $ such that, for all $n \geq 1$,
	\begin{align} \label{bound_finite_variation}
	\abs{n \frac{m}{\sqrt{n}} \sum_{x \in \Z} \sum_{z \in \{-1,1\} } \pi(\omega,x) j(\omega,x,z) \left( p^n_{nt}\left( x+z \right) - p^n_{nt} \left(x \right) \right) \phi \left( \frac{F(\omega, x)}{c \sqrt{n}} \right)} \leq C_1.
	\end{align}
	We now turn to the quadratic variation part of $ \dual{\tilde{p}^n_t} $.
	Since the Brownian motions $ (B^x, x \in \Z ) $ are independent, it is
	\begin{align*}
	\int_0^t \frac{1}{\sqrt{n}} \sum_{x \in \Z} \pi(\omega,x)^2 \phi \left( \frac{F(\omega, x)}{c \sqrt{n}} \right)^2 \frac{1}{N(\omega,x)} p^n_{ns}(x) (1 - p^n_{ns}(x) ) ds.
	\end{align*}
	Then, by \eqref{uniform_ellipticity} and since $ \phi \in \mathcal{C}^\infty_c(\R) $ there exists a constant $ C_2 > 0 $ such that
	\begin{align} \label{bound_quadratic_variation}
	\abs{\frac{1}{\sqrt{n}} \sum_{x \in \Z} \pi(\omega,x)^2 \phi \left( \frac{F(\omega, x)}{c \sqrt{n}} \right)^2 \frac{1}{N(\omega,x)} p^n_{ns}(x) (1 - p^n_{ns}(x) )} \leq C_2.
	\end{align}
	
	We then write $ FV_t(f, \phi) $ and $ QV_t(f, \phi) $ for the finite variation part and the quadratic variation part of $ f(\dual{\tilde{p}^n_t}) $.
	Then, by \eqref{bound_finite_variation} and \eqref{bound_quadratic_variation} and by the It\^o formula,
	\begin{align*}
	& \abs{ FV_t(f, \phi) - FV_s(f, \phi) } \leq \left( \sup \abs{f'} C_1 + \frac{1}{2} \sup \abs{f''} C_2 \right) \abs{t-s} \\
	& \abs{ QV_t(f, \phi) - QV_t(f, \phi) } \leq \sup \abs{f'}^2 C_2 \abs{t-s},
	\end{align*}
	where the supremum is taken over a suitable compact set (depending on $ \phi $).
	Hence both $ (FV_t(f, \phi), t \in [0,T] ) $ and $ ( QV_t(f, \phi), t \in [0,T] ) $ are tight, and by the Aldous-Rebolledo criterion, $ ( f(\dual{\tilde{p}^n_t}), t \in [0,T] ) $ is tight in $ \mathrm{D}([0,T], \Xi) $.
	Since for all $ n \geq 1 $, $ t \mapsto f(\langle \tilde{p}^n_t, \phi \rangle) $ is continuous almost surely, so are its potential limit points, hence the sequence is tight in $ \mathcal{C}([0,T], \Xi) $.
	
	\paragraph*{Conclusion}
	
	Let us now show that the difference between $ \dual{\tilde{p}^n_t} $ and $ \dual{p^n_t} $ vanishes as $n \to \infty$.
	First note that, by \eqref{bound_F_3}, for any $ \varepsilon > 0 $,
	\begin{multline*}
	\abs{ \frac{1}{\sqrt{n}} \sum_{x \in \Z} \pi(\omega,x) p^n_{nt}(x) \phi \left( \frac{F(\omega, x)}{c \sqrt{n}} \right) - \frac{1}{\sqrt{n}} \sum_{x \in \Z} \pi(\omega,x) p^n_{nt}(x) \phi\left( \frac{x}{\sqrt{n}} \right) } \\ \leq \frac{4}{\sqrt{n}} \sup_{x \in \R} \abs{\phi(x)} M_\varepsilon K^4 + \frac{1}{\sqrt{n}} \sum_{\substack{x \in \Z \\ \abs{x} > M_\varepsilon}} K^4 \sup_{\abs{y - x/\sqrt{n}} \leq \varepsilon \frac{\abs{x}}{c \sqrt{n}}} \abs{\phi'(y)} \varepsilon \frac{\abs{x}}{c \sqrt{n}}.
	\end{multline*}
	The first term on the right hand side vanishes as $n \to \infty$ for any fixed $\varepsilon$ while the second term can be made arbitrarily small for all $n \geq 1$ by choosing $\varepsilon$ small enough.
	Hence
	\begin{align*}
	\sup_{t \geq 0} \abs{ \dual{\tilde{p}^n_t} - \frac{1}{\sqrt{n}} \sum_{x \in \Z} \pi(\omega,x) p^n_{nt}(x) \phi\left( \frac{x}{\sqrt{n}} \right) } \cvgas{n} 0.
	\end{align*}
	To conclude, we write, for $\delta > 0$,
	\begin{multline*}
	\abs{ \frac{1}{\sqrt{n}} \sum_{x\in \Z} p^n_{nt}(x) \phi(x/\sqrt{n}) - \frac{1}{\sqrt{n}} \sum_{x \in \Z} \pi(\omega,x) p^n_{nt}(x) \phi(x/\sqrt{n}) } \\ \leq \frac{1}{\sqrt{n}} \sum_{x \in \Z} \abs{\phi(x/\sqrt{n})} \abs{ 1 - \frac{1}{2\delta \sqrt{n}} \sum_{y \in B(x, \delta \sqrt{n}) \cap \Z} \pi(\omega,y) } \\ + \frac{1}{\sqrt{n}} \sum_{x \in \Z} \frac{1}{2\delta \sqrt{n}} \sum_{y \in B(x, \delta \sqrt{n}) \cap \Z} \pi(\omega,y) \abs{ p^n_{nt}(x) \phi(x/\sqrt{n}) - p^n_{nt}(y) \phi(y/\sqrt{n}) }.
	\end{multline*}
	The first term on the right hand side vanishes as $n \to \infty$ for any $\delta > 0$ by Lemma~\ref{lemma:uniform_cvg_pi} (recall that $\phi$ is compactly supported).
	For the second term, by Lemma~\ref{lemma:holder_pn}, for $t \in [0,T]$ and $ \abs{x - y} \leq \delta \sqrt{n} $,
	\begin{align*}
	\abs{ p^n_{nt}(x) \phi(x/\sqrt{n}) - p^n_{nt}(y) \phi(y/\sqrt{n}) } \leq \sup_{\abs{z-x/\sqrt{n}} \leq \delta} \abs{\phi'(z)} \delta + \abs{\phi(x/\sqrt{n})} C_n \delta^\beta.
	\end{align*}
	Hence the second term can be made arbitrarily small for all $ n \geq 1 $ by choosing $\delta$ small enough.
	As a result,
	\begin{align*}
	\sup_{t \in [0,T]} \abs{ \dual{\tilde{p}^n_t} - \dual{\bm{p}^n_t} } \cvgas[\mathbb{P}^\omega]{n} 0.
	\end{align*}
	It follows that $ ( f(\dual{\bm{p}^n_t}), t \in [0,T] ) $ is tight in $ \mathcal{C}([0,T], \R) $ for any $ \phi \in \mathcal{C}^\infty_c (\R) $ and $ f \in C^2 (\R) $.
	By Theorem~3.9.1 in \cite{ethier_markov_1986}, the sequence $ ( \bm{p}^n_t, t \in [0,T] ) $ is tight in $ \mathcal{C}([0,T], \Xi ) $.
\end{proof}

\subsection{Continuity estimate} \label{subsec:holder_estimate}

To prove Lemma~\ref{lemma:holder_pn}, we need the following bounds, which will be proved in Appendix~\ref{appendix:heat_kernel_estimates}.
Recall the definition of $g^\omega_t(x, y)$ in \eqref{def_g_omega}.

\begin{lemma} \label{lemma:continuity_g_omega}
	There exist constants $C, C' > 0$ such that, for $ \mu $-almost every $\omega \in \Omega$, for all $n \geq 1$, $ t \geq 0 $ and $ x, y \in \Z $,
	\begin{align*}
	& \sqrt{n} \sum_{z \in \Z} \int_0^t \left( g^\omega_{ns}(x,z) - g^\omega_{ns}(y,z) \right)^2 ds \leq C t^{1/4} \abs{\frac{x-y}{\sqrt{n}}}^{1/2} \\
	& \sup_{z \in \Z} \abs{ g^\omega_t(x,z) - g^\omega_t(y,z) } \leq C' t^{-3/4} \abs{x-y}^{1/2}.
	\end{align*}
\end{lemma}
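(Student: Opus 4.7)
The plan is to prove the second (pointwise Hölder) inequality first and then obtain the first (integrated $\ell^2$) inequality as an immediate corollary by interpolation between $\ell^1$ and $\ell^\infty$ norms of the heat kernel difference.

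For this reduction, I would use that both $g^\omega_u(x,\cdot)$ and $g^\omega_u(y,\cdot)$ are probability mass functions on $\Z$, so their $\ell^1$ difference is at most $2$; combining with the pointwise estimate gives $\sum_z (g^\omega_u(x,z)-g^\omega_u(y,z))^2 \leq 2C' u^{-3/4}|x-y|^{1/2}$. The change of variable $u=ns$ rewrites the left-hand side of the first bound as $n^{-1/2}\int_0^{nt}\sum_z (g^\omega_u(x,z)-g^\omega_u(y,z))^2\,du$, and the elementary integral $\int_0^{nt} u^{-3/4}\,du = 4(nt)^{1/4}$ produces exactly the $n^{1/4}$ factor needed so that $|x-y|^{1/2}/n^{1/4}=|(x-y)/\sqrt{n}|^{1/2}$ and the $t^{1/4}$ assemble on the right.

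For the pointwise bound itself, the strategy is to interpolate between two simpler ingredients. The first is the uniform on-diagonal upper bound $g^\omega_t(x,z)\leq Ct^{-1/2}$, which is a standard consequence of the Nash inequality: under \eqref{uniform_ellipticity}, the conductances $c_{x,x+1}=mN(\omega,x)N(\omega,x+1)/\langle NN_3\rangle$ are uniformly bounded above and below by deterministic constants, so the Nash inequality for the associated Dirichlet form holds with environment-independent constants. The second is a discrete spatial gradient bound $|g^\omega_t(x+1,z)-g^\omega_t(x,z)|\leq Ct^{-1}$, which telescopes to $|g^\omega_t(x+k,z)-g^\omega_t(x,z)|\leq Ckt^{-1}$. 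Taking the minimum of the two bounds at $k=|x-y|$ gives $|g^\omega_t(x,z)-g^\omega_t(y,z)|\leq C\min(kt^{-1},t^{-1/2})$, and the elementary inequality $\min(a,b)\leq \sqrt{ab}$ yields the claimed $Ct^{-3/4}k^{1/2}$.

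The hard part is the gradient bound. I would obtain it via the discrete conservation law built into the heat equation: writing $\pi(x)\partial_t g^\omega_t(x,z)=F(x)-F(x-1)$ with flux $F(x)=c_{x,x+1}(g^\omega_t(x+1,z)-g^\omega_t(x,z))$, the problem reduces to bounding $F(x)=-\sum_{k>x}\pi(k)\partial_t g^\omega_t(k,z)$ (boundary terms vanish as $|x|\to\infty$ for fixed $t,z$). Two ingredients are required: a pointwise time-derivative bound $|\partial_t g^\omega_t(x,z)|\leq Ct^{-3/2}$, which follows from the spectral factorization $\mathcal{L}P_t=P_{t/2}\mathcal{L}P_{t/2}$, the $L^2(\pi)$ operator bound $\|\mathcal{L}P_{t/2}\|_{\mathrm{op}}\leq 2/(et)$, and the Nash-based estimate $\|P_{t/2}\|_{L^2(\pi)\to\ell^\infty}=O(t^{-1/4})$; and summable-in-$x$ tail control of $\pi(k)\partial_t g^\omega_t(k,z)$, which can be secured by a Carne--Varopoulos Gaussian upper bound (obtained from Nash via Davies' exponential perturbation method). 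Combining these with the uniform lower bound on $c_{x,x+1}$ converts the resulting flux estimate into the desired pointwise gradient bound.
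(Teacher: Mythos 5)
Your derivation of the first ($\ell^2$-integrated) bound from the second (pointwise Hölder) bound is essentially the paper's argument: the paper writes $\left( g^\omega_{ns}(x,z) - g^\omega_{ns}(y,z) \right)^{2} \leq \abs{ g^\omega_{ns}(x,z) - g^\omega_{ns}(y,z) }\,\left( g^\omega_{ns}(x,z) + g^\omega_{ns}(y,z) \right)$, sums over $z$ using $\sum_z g^\omega_{ns}(\cdot,z)=1$, and integrates $s^{-3/4}$; your replacement of $\sum_z\bigl(g_{u}(x,z)+g_{u}(y,z)\bigr)=2$ by $\sum_z\abs{g_{u}(x,z)-g_{u}(y,z)}\le 2$ is purely cosmetic, and the change of variable $u=ns$ with $\int_0^{nt}u^{-3/4}\,du=4(nt)^{1/4}$ is identical.

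For the pointwise bound your route is genuinely different from the paper's. The paper passes to $h^\omega_t=g^\omega_t/\pi$, uses reversibility $h^\omega_t(x,y)=h^\omega_t(y,x)$ to trade a first-argument increment for a second-argument one, telescopes that increment, and applies Cauchy--Schwarz with the resistance weights $1/\bigl(\pi(\omega,l)j(\omega,l,1)\bigr)$. The single analytic input is the Bernstein-representation energy bound $Q^\omega(h^\omega_t(x,\cdot))\le e^{-1}t^{-1}h^\omega_t(x,x)$, which together with the on-diagonal bound $h^\omega_t(x,x)\le Ct^{-1/2}$ of Lemma~\ref{lemma:bound_h} gives $Q^\omega(h^\omega_t(x,\cdot))^{1/2}\le Ct^{-3/4}$, and hence the $t^{-3/4}\abs{x-y}^{1/2}$ modulus in one Cauchy--Schwarz step, with no pointwise gradient bound and no off-diagonal decay. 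Your plan instead proves the stronger spatial-Lipschitz gradient bound $\abs{g^\omega_t(x+1,z)-g^\omega_t(x,z)}\le Ct^{-1}$ and interpolates it with the $Ct^{-1/2}$ sup bound via $\min(kt^{-1},t^{-1/2})\le k^{1/2}t^{-3/4}$. That gets the right exponent, and the ingredients you list do combine correctly (the $L^2\to L^2$ bound $\norm{\mathcal{L}P_{t}}_{\mathrm{op}}\le 1/(et)$ and the ultracontractivity $\norm{P_t}_{L^2\to\ell^\infty}=O(t^{-1/4})$ do give $\norm{\partial_t g^\omega_t}_\infty=O(t^{-3/2})$ after one more factorization), but the flux identity $F(x)=-\sum_{k>x}\pi(k)\partial_t g^\omega_t(k,z)$ forces you to prove Gaussian off-diagonal tail control for $\partial_t g^\omega_t$ as well — the $t^{-3/2}$ sup bound alone makes the tail sum divergent, so without the Davies/Carne--Varopoulos input the $O(t^{-1})$ flux bound does not close. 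In short: your route is correct and delivers a strictly sharper intermediate statement (spatial Lipschitz rather than Hölder-$1/2$), but that sharpness is not needed here, and it costs you an extra layer of heat-kernel technology (Gaussian bounds for the time derivative) that the paper's energy/resistance argument avoids entirely.
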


\begin{proof}[Proof of Lemma~\ref{lemma:holder_pn}]
	We begin by noting that $ p^n_{nt} $ has the integral form representation
	\begin{align*}
	p^n_{nt}(\omega,x) = \sum_{y \in \Z} g^\omega_{nt}(x,y) p^n_0(y) + \sum_{y \in \Z} \int_0^t g^\omega_{ns}(x,y) \sqrt{ \frac{\sqrt{n}}{N(\omega,y)} p^n_{ns}(\omega,y) (1 - p^n_{ns}(\omega,y) )} d B^y_s.
	\end{align*}
	Then, for $k \geq 1$,
	\begin{multline} \label{2k_moment}
	\Eq{ \abs{ p^n_{nt}(\omega,x) - p^n_{nt}(\omega,y) }^{2k} } \leq 2^{2k-1} \abs{ \sum_{z\in \Z} g^\omega_{nt}(x,z) p^n_0(z) - \sum_{z \in \Z} g^\omega_{nt}(y,z) p^n_0(z) }^{2k} \\ + 2^{2k-1} \Eq{ \abs{ \sum_{z \in \Z} \int_0^t \left( g^\omega_{ns}(x,z) - g^\omega_{ns}(y,z) \right) \sqrt{ \frac{\sqrt{n}}{N(\omega,z)} p^n_{ns}(\omega,z) ( 1 - p^n_{ns}(\omega,z) ) } d B^z_s }^{2k} }.
	\end{multline}
	We then bound each term on the right hand side separately, starting with the second one.
	By the Burkholder-Davis-Gundy inequality, \eqref{uniform_ellipticity} and since $ 0 \leq p^n_{nt}(\omega,z) \leq 1 $, for some $ C > 0 $,
	\begin{multline} \label{kolmogorov1}
	\Eq{ \abs{ \sum_{z \in \Z} \int_0^t \left( g^\omega_{ns}(x,z) - g^\omega_{ns}(y,z) \right) \sqrt{ \frac{\sqrt{n}}{N(\omega,z)} p^n_{ns}(\omega,z) ( 1 - p^n_{ns}(\omega,z) ) } d B^z_s }^{2k} } \\ 
	\begin{aligned}
	& \leq C \left( \sqrt{n} \sum_{z \in \Z} \int_0^t \left( g^\omega_{ns}(x,z) - g^\omega_{ns}(y,z) \right)^2 ds \right)^k \\
	& \leq C t^{k/4} \abs{\frac{x-y}{\sqrt{n}}}^{k/2},
	\end{aligned}
	\end{multline}
	where we used Lemma~\ref{lemma:continuity_g_omega} in the last line.
	For the first term, we combine two bounds.
	First note that, for all $ R > 0 $, $ x, y \in \Z $
	\begin{multline*}
	\abs{ \sum_{z\in \Z} g^\omega_{nt}(x,z) p^n_0(z) - \sum_{z \in \Z} g^\omega_{nt}(y,z) p^n_0(z) } \leq \sum_{z \in B\left( \frac{x+y}{2}, \frac{\abs{x-y}}{2} + R \sqrt{n} \right) \cap \Z} \abs{ g^\omega_{nt}(x,z) - g^\omega_{nt}(y,z) } \\ + \sum_{z \in \Z \setminus B(x, R\sqrt{n})} g^\omega_{nt}(x,z) + \sum_{z \in \Z \setminus B(y,R\sqrt{n})} g^\omega_{nt}(y,z). 
	\end{multline*}
	By Lemma~\ref{lemma:bound_variance_RW}, the last two terms are both bounded by $ \frac{C}{R^2} $ for some constant $ C > 0 $, for all $n \geq 1$.
	Then by Lemma~\ref{lemma:continuity_g_omega},
	\begin{align*}
	\abs{ \sum_{z\in \Z} g^\omega_{nt}(x,z) p^n_0(z) - \sum_{z \in \Z} g^\omega_{nt}(y,z) p^n_0(z) } \leq C \left( R + \frac{\abs{x-y}}{2 \sqrt{n}} \right) \sqrt{n} (nt)^{-3/4} \abs{x-y}^{1/2} + \frac{2 C}{R^2}.
	\end{align*}
	Choosing $ R = \abs{\frac{x-y}{\sqrt{n}}}^{-1/6} $, we obtain that for all $ A > 0 $ there exists a constant $ C > 0 $ such that, for all $ x, y \in \Z $ such that $ \abs{\frac{x-y}{\sqrt{n}}} \leq A $,
	\begin{align} \label{holder1}
	\abs{ \sum_{z\in \Z} g^\omega_{nt}(x,z) p^n_0(z) - \sum_{z \in \Z} g^\omega_{nt}(y,z) p^n_0(z) } \leq C ( 1 + t^{-3/4} ) \abs{ \frac{x-y}{\sqrt{n}} }^{1/3}.
	\end{align}
	This bound doesn't behave well when $t$ is too small. To remedy this, we note that by assumption \eqref{holder_initial_condition},
	\begin{align*}
	\abs{ \sum_{z \in \Z} g^\omega_{nt}(x,z) p^n_0(z) - p^n_0(x) } & \leq C \sum_{z \in \Z} g^\omega_{nt}(x,z) \abs{ \frac{x-y}{\sqrt{n}} } \\
	& \leq C \, \Eq[x]{ \abs{ \frac{\xi_{nt} - x}{\sqrt{n}} }^2 }^{1/2} \\
	& \leq C \sqrt{t},
	\end{align*}
	where we have used Jensen's inequality in the second line and Lemma~\ref{lemma:bound_variance_RW} in the last line.
	Hence, using assumption \eqref{holder_initial_condition} again,
	\begin{align*}
	\abs{ \sum_{z \in \Z} g^\omega_{nt}(x,z) p^n_0(z) - \sum_{z \in \Z} g^\omega_{nt}(y,z) p^n_0(z) } & \leq 2 C \sqrt{t} + \abs{ p^n_0(x) - p^n_0(y) } \\
	& \leq 2 C \sqrt{t} + C \abs{\frac{x-y}{\sqrt{n}}}. \numberthis \label{holder2}
	\end{align*}
	The above inequality behaves well when $ t $ is not too large.
	Hence, using \eqref{holder1} when $ t \geq \abs{\frac{x-y}{\sqrt{n}}}^{4/15} $ and \eqref{holder2} when $ t \leq \abs{\frac{x-y}{\sqrt{n}}}^{4/15} $, we conclude that there exists a constant $ C > 0 $ such that, for all $ t \geq 0 $ and every $ x, y \in \Z $,
	\begin{align} \label{kolomogorov2}
	\abs{ \sum_{z \in \Z} g^\omega_{nt}(x,z) p^n_0(z) - \sum_{z \in \Z} g^\omega_{nt}(y,z) p^n_0(z) } \leq C \abs{\frac{x-y}{\sqrt{n}}}^{2/15}.
	\end{align}
	Finally, plugging \eqref{kolmogorov1} and \eqref{kolomogorov2} into \eqref{2k_moment} and noting that the left hand side stays bounded even when $ \abs{x-y} $ grows, we obtain that there exists a constant $ C > 0 $ depending on $ k \geq 1 $ such that, for $ t \in [0,T] $, for all $ x, y \in \Z $,
	\begin{align*}
	\Eq{\abs{p^n_{nt}(\omega,x) - p^n_{nt}(\omega,y)}^{2k}} \leq C \abs{\frac{x-y}{\sqrt{n}}}^{4k/15} \quad \mu(d\omega)-a.s.
	\end{align*}
	By Kolmogorov's continuity theorem, this implies that for any $ \beta \in (0, 2/15) $ there exist random variables $ (C_n, n \geq 1) $ such that, for $ n \geq 1 $ and $ T > 0 $, and every $ x, y \in \Z $,
	\begin{align*}
	\sup_{t \in [0,T]}  \abs{p^n_{nt}(\omega,x) - p^n_{nt}(\omega,y)} \leq C_n \abs{\frac{x-y}{\sqrt{n}}}^{\beta}
	\end{align*}
	almost surely and $ \Eq{C_n^2} \leq C $ for all $ n \geq 1 $, $ \mu(d\omega) $-almost surely.
	This concludes the proof of Lemma~\ref{lemma:holder_pn}.
\end{proof}

\appendix

\section{Heat kernel estimates} \label{appendix:heat_kernel_estimates}

To prove Lemma~\ref{lemma:continuity_g_omega} and Theorem~\ref{thm:local_clt}, we need various estimates on the heat kernel associated to the random walk $ (\xi_t)_{t \geq 0} $.
Let us define, for $ t \geq 0 $ and $ x, y \in \Z $,
\begin{align*}
h^\omega_t(x,y) = \frac{g^\omega_t(x,y)}{\pi(\omega,y)}.
\end{align*}
We follow a similar approach to that of \cite{derrien_local_2015}, using \eqref{uniform_ellipticity} to improve the bounds.
We state two lemmas and we give their proofs.
At the end of the section, we prove Lemma~\ref{lemma:continuity_g_omega}.

\begin{lemma} \label{lemma:bound_h}
	There exists a constant $ C > 0 $ such that, for $ \mu $-almost every $ \omega \in \Omega $,
	\begin{align*}
	\sup_{t \geq 0} \sup_{x, y \in \Z} \sqrt{t} \, h^\omega_t(x,y) \leq C.
	\end{align*}
\end{lemma}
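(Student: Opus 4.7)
The plan is to obtain an ultracontractive bound for the semigroup $(P_t)$ associated to $(\xi_t)$ via a Nash inequality, exploiting the uniform ellipticity assumption to make all constants independent of~$\omega$.

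First I would observe that, by the reversibility relation $\pi(\omega,x)g^\omega_t(x,y)=\pi(\omega,y)g^\omega_t(y,x)$, the kernel $h^\omega_t(x,y)=g^\omega_t(x,y)/\pi(\omega,y)$ is symmetric in $(x,y)$, and that the operator $P_tf(x)=\sum_y h^\omega_t(x,y)f(y)\pi(\omega,y)=\mathbb{E}^\omega_x[f(\xi_t)]$ is self-adjoint on $\ell^2(\pi)$. A standard duality gives
\[
\sup_{x,y\in\Z}h^\omega_t(x,y)=\|P_t\|_{\ell^1(\pi)\to\ell^\infty},
\]
so it suffices to establish $\|P_t\|_{\ell^1(\pi)\to\ell^\infty}\leq C t^{-1/2}$ for some $C$ depending only on~$K$ and $\langle NN_3\rangle$.

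Next I would compute the Dirichlet form
\[
\mathcal{E}(f,f)=\frac{m}{2\langle NN_3\rangle}\sum_{x\in\Z}\sum_{z\in\{-1,1\}}N(\omega,x)N(\omega,x+z)\bigl(f(x+z)-f(x)\bigr)^2,
\]
and use \eqref{uniform_ellipticity} to bound it below by $c_0\,\mathcal{E}_{\mathrm{SRW}}(f,f)$, where $\mathcal{E}_{\mathrm{SRW}}$ is the Dirichlet form of the simple random walk on $\Z$ and $c_0>0$ depends only on $K$ and $m$. Since \eqref{uniform_ellipticity} also gives $\pi(\omega,y)\asymp 1$ uniformly in $\omega$ and $y$, the classical one-dimensional Nash inequality $\|f\|_{\ell^2}^6\leq C_N\|f\|_{\ell^1}^4\mathcal{E}_{\mathrm{SRW}}(f,f)$ transfers to
\[
\|f\|_{\ell^2(\pi)}^6\leq C_1\|f\|_{\ell^1(\pi)}^4\,\mathcal{E}(f,f)
\]
with $C_1=C_1(K,m,\langle NN_3\rangle)$ independent of $\omega$.

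Then I would run the standard Nash-to-ultracontractivity argument: for $f\geq 0$ set $u(t)=\|P_tf\|_{\ell^2(\pi)}^2$; since $u'(t)=-2\mathcal{E}(P_tf,P_tf)$ and $\|P_tf\|_{\ell^1(\pi)}\leq\|f\|_{\ell^1(\pi)}$, the Nash inequality yields the differential inequality $u(t)^3\leq -\tfrac{C_1}{2}\|f\|_{\ell^1(\pi)}^4u'(t)$, which after integration gives $\|P_t\|_{\ell^1(\pi)\to\ell^2(\pi)}\leq C_2 t^{-1/4}$. Dualizing (using self-adjointness) and composing $P_t=P_{t/2}\circ P_{t/2}$ then yields $\|P_t\|_{\ell^1(\pi)\to\ell^\infty}\leq C_3 t^{-1/2}$, i.e.\ $\sqrt{t}\,h^\omega_t(x,y)\leq C_3$ for all $t\geq 1$. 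For $t\in(0,1]$ the bound is trivial: $h^\omega_t(x,y)\leq 1/\pi(\omega,y)$, which is uniformly bounded in $\omega$ and $y$ by \eqref{uniform_ellipticity}, so $\sqrt{t}\,h^\omega_t\leq C_4$ automatically.

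The main thing to verify carefully is that every constant produced along the way depends only on $K$, $m$, and the deterministic quantity $\langle NN_3\rangle$, and not on the particular realisation $\omega$; this is precisely where \eqref{uniform_ellipticity} enters, both to compare $\mathcal{E}$ to $\mathcal{E}_{\mathrm{SRW}}$ and to compare $\pi(\omega,\cdot)$ to the counting measure.
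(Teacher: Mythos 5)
Your proposal is correct, but it follows a genuinely different route than the paper. The paper adopts Derrien's variational argument (\cite{derrien_local_2015}): it first proves, via a Bernstein-type representation, the bound $Q^\omega(h^\omega_t(x,\cdot))\leq e^{-1}t^{-1}h^\omega_t(x,x)$ on the Dirichlet energy of the heat kernel, then compares $h^\omega_t(x,x)$ to its minimum over a ball $B(x,k)$ via Cauchy--Schwarz, choosing the radius $k$ so that $\sum_{y\in B(x,k)}\pi(\omega,y)\asymp 3/h^\omega_t(x,x)$; this pins down the on-diagonal bound, which is then propagated off-diagonal by a second application of the same mechanism. Your approach is the classical Nash-inequality/ultracontractivity scheme: comparison of the Dirichlet form to that of simple random walk via \eqref{uniform_ellipticity}, the one-dimensional Nash inequality on $\Z$ (weighted version obtained from $\pi\asymp 1$), the differential inequality for $u(t)=\|P_tf\|_{\ell^2(\pi)}^2$, and the dualization $P_t=P_{t/2}\circ P_{t/2}$. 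Both give the same $t^{-1/2}$ decay with constants depending only on $K$, $m$, $\langle NN_3\rangle$. What yours buys is a very modular proof built from standard off-the-shelf functional-analytic ingredients; what the paper's buys is that the same intermediate estimate $Q^\omega(h^\omega_t(x,\cdot))\lesssim t^{-3/2}$ (their display~\eqref{boundQ2}) is reused directly in the proof of the Hölder estimate Lemma~\ref{lemma:continuity_h}, so the two heat-kernel bounds come from a single computation. One small technical point worth a sentence in a full write-up: the step $u'(t)=-2\mathcal{E}(P_tf,P_tf)$ and the passage from $\|P_t\|_{\ell^1\to\ell^2}\lesssim t^{-1/4}$ to $\|P_t\|_{\ell^1\to\ell^\infty}\lesssim t^{-1/2}$ both use self-adjointness on $\ell^2(\pi)$; you do flag this, which is right, and the $t\in(0,1]$ fallback $h^\omega_t\leq 1/\pi$ is correct since $\pi$ is bounded below by a constant depending only on $K$ and $\langle NN_3\rangle$.
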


\begin{lemma} \label{lemma:continuity_h}
	There exists a constant $ C > 0 $ such that, for $ \mu $-almost every $ \omega \in \Omega $,
	\begin{align*}
	\sup_{t \geq 0} \sup_{x, y, z \in \Z} t^{3/4} \frac{\abs{h^\omega_t(x,y) - h^\omega_t(x,z)}}{\abs{y-z}^{1/2}} \leq C.
	\end{align*}
\end{lemma}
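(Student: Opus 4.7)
The plan is to exploit the reversibility of $\xi$ with respect to $\pi$ and the self-adjointness of its generator. Fix $x \in \Z$ and set $u_t(w) = h^\omega_t(x, w)$. From reversibility we get $h^\omega_t(x, w) = h^\omega_t(w, x)$, so $u_t$ coincides with $P^\omega_t u_0$ for $u_0(\cdot) = \pi(\omega, x)^{-1} \delta_x(\cdot)$, and $(u_t)_{t \geq 0}$ solves the discrete heat equation $\partial_t u_t = \mathcal{L}^\omega u_t$, where $\mathcal{L}^\omega$ is the generator of $\xi$, which is self-adjoint on $L^2(\pi)$. A Chapman--Kolmogorov computation then gives $\|u_t\|_{L^2(\pi)}^2 = h^\omega_{2t}(x, x)$, which is directly controlled by Lemma~\ref{lemma:bound_h}.

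First I would telescope along a nearest-neighbour path $y = y_0, y_1, \ldots, y_k = z$ with $k = |y-z|$ and apply Cauchy--Schwarz to get
\[
\left| h^\omega_t(x, y) - h^\omega_t(x, z) \right|^2 \leq |y - z| \sum_{w \in \Z} \left( u_t(w+1) - u_t(w) \right)^2.
\]
Uniform ellipticity~\eqref{uniform_ellipticity} bounds the right-hand side, up to a multiplicative constant, by the Dirichlet form $\mathcal{E}^\omega(u_t, u_t) = -\langle u_t, \mathcal{L}^\omega u_t \rangle_{L^2(\pi)}$. The proof therefore reduces to the pointwise Dirichlet-form estimate $\mathcal{E}^\omega(u_t, u_t) \leq C/t^{3/2}$.

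To establish this I would combine the standard energy-dissipation identity $\frac{d}{dt} \|u_t\|^2_{L^2(\pi)} = -2 \mathcal{E}^\omega(u_t, u_t)$ with the fact that $t \mapsto \mathcal{E}^\omega(u_t, u_t)$ is itself non-increasing, since $\frac{d}{dt}\mathcal{E}^\omega(u_t, u_t) = -2 \|\mathcal{L}^\omega u_t\|^2_{L^2(\pi)} \leq 0$ by self-adjointness of $\mathcal{L}^\omega$. Together with Lemma~\ref{lemma:bound_h} these give
\[
\tfrac{t}{2}\, \mathcal{E}^\omega(u_t, u_t) \leq \int_{t/2}^t \mathcal{E}^\omega(u_s, u_s)\, ds = \tfrac{1}{2}\left( h^\omega_t(x, x) - h^\omega_{2t}(x, x) \right) \leq C/\sqrt{t},
\]
which yields the required bound. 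The main obstacle is precisely this last step: upgrading the on-diagonal heat-kernel bound of Lemma~\ref{lemma:bound_h} into a pointwise bound on $\mathcal{E}^\omega(u_t, u_t)$ by exploiting the monotonicity of the Dirichlet form along the semigroup. Everything else --- the reversibility identities, the nearest-neighbour telescoping, and reducing the discrete gradient sum to the Dirichlet form via uniform ellipticity --- is essentially routine once that key input is in hand.
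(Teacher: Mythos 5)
Your proof is correct, and its skeleton coincides with the paper's: telescope $h^\omega_t(x,\cdot)$ along a nearest-neighbour path from $y$ to $z$, apply Cauchy--Schwarz, and use \eqref{uniform_ellipticity} to bound the resulting gradient sum by the Dirichlet form $Q^\omega(h^\omega_t(x,\cdot))$ (your $\mathcal{E}^\omega(u_t,u_t)$). The point of divergence is how you establish the key energy bound $Q^\omega(h^\omega_t(x,\cdot)) \leq C\,t^{-3/2}$. The paper quotes \eqref{bound_Q}, namely $Q^\omega(h^\omega_t(x,\cdot)) \leq (et)^{-1} h^\omega_t(x,x)$, derived via Bernstein's representation theorem following \cite{derrien_local_2015}, and then inserts Lemma~\ref{lemma:bound_h}. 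You instead use the monotonicity of $t \mapsto Q^\omega(h^\omega_t(x,\cdot))$ along the semigroup (a consequence of the self-adjointness of the generator on $L^2(\pi)$), the energy-dissipation identity $\frac{d}{dt}\|u_t\|^2_{L^2(\pi)} = -2\,Q^\omega(u_t)$, the Chapman--Kolmogorov identity $\|u_t\|^2_{L^2(\pi)} = h^\omega_{2t}(x,x)$, and a dyadic averaging over $[t/2,t]$, before invoking Lemma~\ref{lemma:bound_h}. Both routes produce the same power of $t$ and rest on the same two pillars --- reversibility of the walk and the on-diagonal bound --- so they are close cousins: Bernstein's representation is essentially the spectral theorem applied pointwise ($\lambda e^{-2\lambda t} \leq (et)^{-1} e^{-\lambda t}$ integrated against the spectral measure of the Dirac mass), whereas your argument replaces that pointwise spectral inequality with an integral-in-time averaging that avoids invoking the spectral representation explicitly. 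Either version is a standard and correct way to upgrade an on-diagonal heat-kernel bound to a gradient estimate, so the proof goes through as you have sketched it.
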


\begin{proof}[Proof of Lemma~\ref{lemma:bound_h}]
	For $ f \in \mathcal{F}_0(\Z) $, we set
	\begin{align*}
	Q^\omega(f) = \frac{1}{2} \sum_{x \in \Z} \sum_{z \in \{-1,1\}} \pi(\omega,x) j(\omega,x,z) \left( f(x+z) - f(x) \right)^2.
	\end{align*}
	Using Bernstein's representation theorem as in \cite{derrien_local_2015}, one shows that for all $ x \in \Z $, $ t > 0 $,
	\begin{align} \label{bound_Q}
	Q^\omega(h^\omega_t(x, \cdot)) \leq \frac{e^{-1}}{t} h^\omega_t(x,x).
	\end{align}
	We then proceed as in the proof of Theorem~3.3 in \cite{derrien_local_2015}.
	Fix $ x \in \Z $, $ t > 0 $ and $ k \geq 1 $ (to be chosen later on).
	Let $ y_0 \in B(x,k) \cap \Z $ be such that
	\begin{align*}
	h^\omega_t(x,y_0) = \min_{y \in B(x,k) \cap \Z} h^\omega_t(x,y).
	\end{align*}
	Then
	\begin{align*}
	h^\omega_t(x,y_0) & \leq \frac{1}{\sum_{y \in B(x,k) \cap \Z} \pi(\omega,y)} \sum_{y \in B(x,k) \cap \Z} h^\omega_t(x,y) \pi(\omega,y) \\
	& \leq \left( \sum_{y \in B(x,k) \cap \Z} \pi(\omega,y) \right)^{-1},
	\end{align*}
	since $ \sum_{y \in \Z} h^\omega_t(x,y)\pi(\omega,y) = 1 $.
	Hence, assuming without loss of generality that $ y_0 \geq x $,
	\begin{align*}
	h^\omega_t(x,x) - \left( \sum_{y \in B(x,k) \cap \Z} \pi(\omega,y) \right)^{-1} & \leq h^\omega_t(x,x) - h^\omega_t(x,y_0) \\
	& \leq \sum_{l=x}^{y_0-1} \abs{h^\omega_t(x,l) - h^\omega_t(x,l+1)}.
	\end{align*}
	By the Cauchy-Schwarz inequality, the right hand side is bounded by
	\begin{multline*}
	\left( \sum_{l=x}^{y_0-1} \pi(\omega,l) j(\omega,l,1) \left( h^\omega_t(x,l+1) - h^\omega_t(x,l) \right)^2 \right)^{1/2} \left( \sum_{l=x}^{y_0-1} \frac{1}{\pi(\omega,l) j(\omega,l,1)} \right)^{1/2} \\
	\leq \sqrt{2} Q^\omega(h^\omega_t(x,\cdot))^{1/2} \left( \sum_{l \in B(x,k) \cap \Z} \frac{1}{\pi(\omega,l) j(\omega,l,1)} \right)^{1/2}.
	\end{multline*}
	By \eqref{bound_Q} and \eqref{uniform_ellipticity}, recalling the expression of $ \pi(\omega,l) $ and $ j(\omega,l,1) $,
	\begin{align} \label{bound_h1}
	h^\omega_t(x,x) - \left( \sum_{y \in B(x,k) \cap \Z} \pi(\omega,y) \right)^{-1} \leq 2\sqrt{3} K^2 \sqrt{k} \frac{e^{-1/2}}{\sqrt{t}} h^\omega_t(x,x)^{1/2}.
	\end{align}
	Since the random walk $ (\xi_t)_{t\geq 0} $ is null recurrent, $ h^\omega_t(x,x) \to 0 $ as $ t \to \infty $ for all $ x \in \Z $.
	In addition, $ h^\omega_t(x,x) \leq \pi(\omega,x)^{-1} $, hence we can always choose $ k \geq 1 $ such that
	\begin{align} \label{choose_k}
	\sum_{y \in B(x,k-1) \cap \Z} \pi(\omega,y) \leq \frac{3}{h^\omega_t(x,x)} \leq \sum_{y\in B(x,k) \cap \Z} \pi(\omega,y).
	\end{align}
	Using such a $ k $ in \eqref{bound_h1} results in
	\begin{align*}
	\frac{2}{3} h^\omega_t(x,x) & \leq 2 \sqrt{3} K^2 \frac{e^{-1/2}}{\sqrt{t}} \left( k h^\omega_t(x,x) \right)^{1/2} \\
	& \leq 6 K^2 \frac{e^{-1/2}}{\sqrt{t}} \left( \frac{1}{k} \sum_{y \in B(x,k-1) \cap \Z} \pi(\omega,y) \right)^{-1/2}.
	\end{align*}
	Finally, by \eqref{uniform_ellipticity},
	\begin{align*}
	\frac{1}{k} \sum_{y \in B(x,k-1) \cap \Z} \pi(\omega,y) \geq \frac{1}{K^4} \frac{2k-1}{k} \geq \frac{1}{K^4}.
	\end{align*}
	As a result, for all $ x \in \Z $, $ t > 0 $,
	\begin{align} \label{result_x}
	\sqrt{t} h^\omega_t(x,x) \leq 9 K^4 e^{-1/2} \quad \mu(d\omega)-a.s.
	\end{align}
	This proves the result when $ x = y $, let us now extend it for all $ x, y \in \Z $.
	
	Reasoning as above, we obtain for $ k \geq 1 $,
	\begin{align*}
	h^\omega_t(x,y) - \left( \sum_{z \in B(y,k) \cap \Z} \pi(\omega,z) \right)^{-1} \leq 2 \sqrt{3 k} K^2 Q^\omega(h^\omega_t(x,\cdot))^{1/2}.
	\end{align*}
	By \eqref{bound_Q} and \eqref{result_x},
	\begin{align*}
	Q^\omega(h^\omega_t(x, \cdot))^{1/2} & \leq \frac{e^{-1/2}}{\sqrt{t}} h^\omega_t(x,x)^{1/2} \\
	& \leq \frac{C}{t^{3/4}}, \numberthis \label{boundQ2}
	\end{align*}
	for some constant $ C > 0 $.
	We then choose $ k \geq 1 $ such that
	\begin{align*}
	\sum_{z \in B(y,k-1) \cap \Z} \pi(\omega,z) \leq \frac{3}{h^\omega_t(x,y)} \leq \sum_{z \in B(y,k) \cap \Z} \pi(\omega,z).
	\end{align*}
	Noting that
	\begin{align*}
	2k - 1 \leq K^4 \frac{3}{h^\omega_t(x,y)},
	\end{align*}
	we obtain
	\begin{align*}
	\frac{2}{3} h^\omega_t(x,y) \leq 2 \sqrt{3} K^2 \frac{C}{t^{3/4}} \left( \frac{3}{2} \frac{K^4}{h^\omega_t(x,y)} + \frac{1}{2} \right)^{1/2}.
	\end{align*}
	Multiplying by $ t^{3/4} \sqrt{h^\omega_t(x,y)} $ on both sides, we obtain the result after noting that $ h^\omega_t(x,y) \leq K^4 $.
\end{proof}

We now turn to the proof of Lemma~\ref{lemma:continuity_h}.

\begin{proof}[Proof of Lemma~\ref{lemma:continuity_h}]
	For $ x, y, z \in \Z $ and $ t > 0 $, we write, assuming for example that $ z \geq y $,
	\begin{align*}
	\abs{h^\omega_t(x,y) - h^\omega_t(x,z)} \leq \sum_{l = y}^{z-1} \abs{h^\omega_t(x,l) - h^\omega_t(x,l+1)}.
	\end{align*}
	Reasoning as in the proof of Lemma~\ref{lemma:bound_h}, we obtain
	\begin{align*}
	\abs{h^\omega_t(x,y) - h^\omega_t(x,z)} \leq \sqrt{2} Q^\omega(h^\omega_t(x, \cdot))^{1/2} \left( \sum_{l=y}^{z-1} \frac{1}{\pi(\omega,l) j(\omega,l,1)} \right)^{1/2}.
	\end{align*}
	By \eqref{uniform_ellipticity}, for some $ C > 0 $
	\begin{align*}
	\sum_{l=y}^{z-1} \frac{1}{\pi(\omega,l) j(\omega,l,1)} \leq C \abs{x-y}.
	\end{align*}
	Lemma~\ref{lemma:continuity_h} then follows using \eqref{boundQ2}.
\end{proof}

Finally, we use Lemma~\ref{lemma:continuity_h} to prove Lemma~\ref{lemma:continuity_g_omega}.

\begin{proof}[Proof of Lemma~\ref{lemma:continuity_g_omega}]
	We first note that the second part of the statement follows at once from Lemma~\ref{lemma:continuity_h} and \eqref{uniform_ellipticity}, noting that $ h^\omega_t(x,y) = h^\omega_t(y,x) $ by reversibility.
	For the first statement, we have, using Lemma~\ref{lemma:continuity_h},
	\begin{align*}
	\left( g^\omega_{ns}(x,z) - g^\omega_{ns}(y,z) \right)^{2} \leq C (ns)^{-3/4} \abs{x-y}^{1/2} \left( g^\omega_{ns}(x,z) + g^\omega_{ns}(y,z) \right).
	\end{align*}
	Since $ \sum_{z \in \Z} g^\omega_t(\cdot,z) = 1 $, summing over $ z $ and integrating, we obtain
	\begin{align*}
	\sqrt{n} \sum_{z \in\Z} \int_{0}^{t} \left( g^\omega_{ns}(x,z) - g^\omega_{ns}(y,z) \right)^2 ds \leq C \abs{\frac{x-y}{\sqrt{n}}}^{1/2} \int_{0}^{t}s^{-3/4}ds,
	\end{align*}
	and the result follows.
\end{proof}

\section{The local central limit theorem for reversible random walks in a random environment} \label{sec:local_clt}

Armed with Lemma~\ref{lemma:bound_h} and Lemma~\ref{lemma:continuity_h}, we can prove the local central limit theorem for the random walk $ (\xi_t)_{t \geq 0} $ (Theorem~\ref{thm:local_clt}).
Again, we follow the method used in \cite{derrien_local_2015}.

\begin{proof}[Proof of Theorem~\ref{thm:local_clt}]
	For $ \delta > 0 $ and $ x, y \in \Z $, we write
	\begin{align*}
	\sqrt{n} \frac{g^\omega_{nt}(x,y)}{\pi(\omega,y)} - G_t &\left( \frac{x-y}{\sqrt{n}} \right) = \sqrt{n} h^\omega_{nt}(x,y) \left( 1 - \frac{1}{2 \delta \sqrt{n}} \sum_{k \in B(y, \delta \sqrt{n}) \cap \Z} \pi(\omega,k) \right) \\
	& \quad + \frac{1}{2\delta} \sum_{k \in B(y,\delta \sqrt{n}) \cap \Z} \pi(\omega,k) \left( h^\omega_{nt}(x,y) - h^\omega_{nt}(x,k) \right) \\
	& \quad + \frac{1}{2\delta} \left( \Pq[x]{\xi_{nt} \in B(y, \delta \sqrt{n}) \cap \Z} - \P{ \mathcal{N}(x/\sqrt{n}, \sigma^2 t) \in B(y/\sqrt{n}, \delta)} \right) \\
	& \quad + \frac{1}{2\delta} \int_{y/\sqrt{n} - \delta}^{y/\sqrt{n} + \delta} G_t\left( \frac{x}{\sqrt{n}} - z \right) dz - G_t\left( \frac{x-y}{\sqrt{n}} \right) \\
	& = A^\delta_n + B^\delta_n + C^\delta_n + D^\delta_n.
	\end{align*}
	We then bound each term separately.
	
	\paragraph*{Bound on $ A^\delta_n $}
	
	By Lemma~\ref{lemma:bound_h},
	\begin{align} \label{bound_An}
	\sup_{t \in [0,T]} \max_{\substack{x \in B(0,R\sqrt{n}) \cap \Z \\ y \in B(0,R \sqrt{n}) \cap \Z}} \abs{A^\delta_n} \leq C \max_{y \in B(0,r\sqrt{n}) \cap \Z} \abs{1 - \Pi^{n, \delta}\left( \omega, \frac{y}{\sqrt{n}} \right)},
	\end{align}
	where $ \Pi^{n, \delta} $ was defined in \eqref{definition_Pi}.
	By Lemma~\ref{lemma:uniform_cvg_pi}, the right hand side converges to 0 as $ n \to \infty $ for any $ \delta > 0 $, $ \mu(d\omega) $-almost surely.
	
	\paragraph*{Bound on $ B^\delta_n $}
	
	From Lemma~\ref{lemma:continuity_h} and \eqref{uniform_ellipticity},
	\begin{align*}
	\abs{B^\delta_n} & \leq \frac{K^4}{2\delta} \sum_{k \in B(y, \delta \sqrt{n}) \cap \Z} C (nt)^{-3/4} \abs{y-k}^{1/2} \\
	& \leq C \frac{K^4}{t^{3/4}} \delta^{1/2}
	\end{align*}
	Hence for all $ \delta > 0 $ and $ 0 < \varepsilon < T $,
	\begin{align} \label{bound_Bn}
	\lim_{n \to \infty} \sup_{t \in [\varepsilon, T]} \max_{x, y \in \Z} \abs{B^\delta_n} = 0 \quad \mu(d\omega)-a.s.
	\end{align}
	
	\paragraph*{Bound on $ C^\delta_n $}
	
	For $ t \geq 0 $ set
	\begin{align*}
	& F_n^\omega(t,x,y) = \Pq[\lfloor\sqrt{n}x \rfloor]{\frac{1}{\sqrt{n}} \xi_{nt} \leq y} \\
	& F(t,x,y) = \P{\mathcal{N}(x, \sigma^2 t) \leq y}.
	\end{align*}
	By the central limit theorem, $ F_n^\omega $ converges pointwise to $ F $ as $ n \to \infty $, for $ \mu $ almost every $ \omega \in \Omega $.
	Moreover, by the functional central limit theorem (Theorem~\ref{thm:functional_clt}), $ F_n^\omega(\cdot, x, y) $ converges to $ F(\cdot, x, y) $ uniformly on $ [0,T] $ for each $ x, y \in \R $ (using Fatou's lemma).
	Also, $ x \mapsto F_n^\omega(t,x,y) $ and $ y \mapsto F_n^\omega(t,x,y) $ are both monotone (and so are $ x \mapsto F(t,x,y) $ and $ y \mapsto F(t,x,y) $), and $ F $ is continuous.
	It then follows that $ F_n^\omega $ converges to $ F $ uniformly on compact sets of $ \R_+ \times \R^2 $, $ \mu(d\omega) $ almost surely.
	
	Now note that
	\begin{multline*}
	C^\delta_n = \frac{1}{2\delta} \left[ \left( F_n^\omega\left(t,\frac{x}{\sqrt{n}}, \frac{y}{\sqrt{n}} + \delta \right) - F\left(t,\frac{x}{\sqrt{n}},\frac{y}{\sqrt{n}} + \delta \right) \right) \right. \\ \left. - \left( F_n^\omega\left( t, \frac{x}{\sqrt{n}}, \frac{y}{\sqrt{n}} - \delta \right) - F\left( t, \frac{x}{\sqrt{n}}, \frac{y}{\sqrt{n}} - \delta \right) \right) \right].
	\end{multline*}
	Hence
	\begin{align*}
	\sup_{t \in [0,T]} \max_{\substack{x \in B(0,R\sqrt{n}) \cap \Z \\ y \in B(0,R\sqrt{n}) \cap \Z}} \abs{C^\delta_n} \leq \frac{1}{\delta} \sup_{t \in [0,T]} \sup_{\substack{x \in B(0,R) \\ y \in B(0,R + \delta)}} \abs{F_n^\omega(t,x,y) - F(t,x,y)}.
	\end{align*}
	By the uniform convergence of $ F_n^\omega $ on compact sets, we thus have, for any $ \delta > 0 $,
	\begin{align} \label{bound_Cn}
	\lim_{n \to \infty} \sup_{t \in [0,T]} \max_{\substack{x \in B(0,R\sqrt{n}) \cap \Z \\ y \in B(0,R\sqrt{n}) \cap \Z}} \abs{C^\delta_n} = 0 \quad \mu(d\omega)-a.s.
	\end{align}
	
	\paragraph*{Bound on $ D^\delta_n $}
	
	Since
	\begin{align*}
	\sup_{x \in \R} \abs{\partial_x G_t(x)} = \frac{e^{-1/2}}{\sqrt{2\pi} \sigma^2 t},
	\end{align*}
	we have, for all $ 0 < \varepsilon < T $ and $ \delta > 0 $,
	\begin{align} \label{bound_Dn}
	\sup_{n \geq 1} \sup_{t \in [\varepsilon, T]} \sup_{x, y \in \R} \abs{D^\delta_n} \leq \frac{e^{-1/2}}{\sqrt{2\pi} \sigma^2 \varepsilon} \delta.
	\end{align}
	
	Combining \eqref{bound_An}, \eqref{bound_Bn}, \eqref{bound_Cn} and \eqref{bound_Dn}, we see that $ \delta $ can be chosen so that $ \abs{B^\delta_n} $ and $ \abs{D^\delta_n} $ are arbitrarily small for all $ n \geq 1 $, and then we can choose $ n_0 $ such that for all $ n \geq n_0 $, $ \abs{A^\delta_n} $ and $ \abs{C_n^\delta} $ are arbitrarily small.
	This concludes the proof of Theorem~\ref{thm:local_clt}.
\end{proof}

\section{Proof of Lemma~\ref{lemma:coupling}} \label{appendix:coupling}

\begin{proof}[Proof of Lemma~\ref{lemma:coupling}]
	First of all, we can assume without loss of generality that $ \E{Y} = 1 $.
	Then there exists a random variable $ \tilde{X} $ taking values in $ \R_+ $ such that, for any measurable non-negative function $ f $,
	\begin{align*}
	\mathbb{E}[ f(\tilde{X}) ] = \E{ f(X) Y }.
	\end{align*}
	Let us show that there exists a coupling of $ X $ and $ \tilde{X} $ such that $ \tilde{X} \geq X $ almost surely.
	
	Let $ \mathcal{P}_X $ denote the law of the random variable $ X $ and set, for all $ x \in [0,+\infty] $,
	\begin{align*}
	F(x) = \int_{0}^{x} \mathcal{P}_X(du), && G(x) = \int_{0}^{x} g(u) \mathcal{P}_X(du).
	\end{align*}
	We prove that for all $ x \geq 0 $,
	\begin{align} \label{inequality_repartition_functions}
	G(x) \leq F(x).
	\end{align}
	Define $ x_0 \in [0,+\infty] $ by
	\begin{align*}
	x_0 = \inf \lbrace x \geq 0 : g(x) > 1 \rbrace.
	\end{align*}
	Then \eqref{inequality_repartition_functions} clearly holds for all $ x < x_0 $.
	If $ x_0 = +\infty $, then \eqref{inequality_repartition_functions} is proved.
	Otherwise, suppose that there exists $ x \geq x_0 $ such that
	\begin{align*}
	G(x) > F(x).
	\end{align*}
	Since $ g $ is non-decreasing, $ g(u) \geq 1 $ for all $ u > x_0 $.
	If $ g(x_0) < 1 $, \eqref{inequality_repartition_functions} holds for $ x = x_0 $ and we can assume that $ x > x_0 $, otherwise we have $ g(u) \geq 1 $ for all $ u \geq x_0 $.
	As a result,
	\begin{align*}
	\int_{x}^{+\infty} g(u) \mathcal{P}_X(du) \geq \int_{x}^{+\infty} \mathcal{P}_X(du).
	\end{align*}
	Adding this to the previous inequality, we obtain
	\begin{align*}
	G(+\infty) > F(+\infty).
	\end{align*}
	This is a contradiction because we have assumed $ \E{Y} = G(+\infty) = 1 = F(+\infty) $.
	We have thus proved \eqref{inequality_repartition_functions} for all $ x \geq 0 $.
	Since $ F $ (resp. $ G $) is the cumulative distribution function of $ X $ (resp. of $ \tilde{X} $), there exists a coupling of $ X $ and $ \tilde{X} $ such that $ \tilde{X} \geq X $ almost surely (for example let $ U $ be a uniform random variable on $ [0,1] $ and set $ X = F^{-1}(U) $ and $ \tilde{X} = G^{-1}(U) $, where $ F^{-1} $ and $ G^{-1} $ are the right continuous generalised inverse functions of $ F $ and $ G $).
	It then follows that
	\begin{align*}
	\E{X Y} = \mathbb{E} [\tilde{X}] \geq \E{X} = \E{X}\E{Y},
	\end{align*}
	and the Lemma is proved.
\end{proof}


\providecommand{\bysame}{\leavevmode\hbox to3em{\hrulefill}\thinspace}
\providecommand{\MR}{\relax\ifhmode\unskip\space\fi MR }
\providecommand{\MRhref}[2]{%
  \href{http://www.ams.org/mathscinet-getitem?mr=#1}{#2}
}
\providecommand{\href}[2]{#2}
\begin{thebibliography}{}

\end{thebibliography}


\begin{thebibliography}{DMFGW89}
	
	\bibitem[BCD16]{birkner_random_2016}
	Matthias Birkner, Jiri Cerny, and Andrej Depperschmidt, \emph{Random walks in
		dynamic random environments and ancestry under local population regulation},
	Electronic Journal of Probability \textbf{21} (2016).
	
	\bibitem[BCDG13]{birkner_directed_2013}
	Matthias Birkner, Jiri Cerny, Andrej Depperschmidt, and Nina Gantert,
	\emph{Directed random walk on the backbone of an oriented percolation
		cluster}, Electronic Journal of Probability \textbf{18} (2013).
	
	\bibitem[BDE02]{barton_neutral_2002}
	Nick~H. Barton, Frantz Depaulis, and Alison~M. Etheridge, \emph{Neutral
		evolution in spatially continuous populations}, Theoretical Population
	Biology \textbf{61} (2002), no.~1, 31--48.
	
	\bibitem[BEKV13]{barton_inference_2013}
	Nick~H. Barton, Alison~M. Etheridge, Jerome Kelleher, and Amandine V\'eber,
	\emph{Inference in two dimensions: Allele frequencies versus lengths of
		shared sequence blocks}, Theoretical population biology \textbf{87} (2013),
	105--119.
	
	\bibitem[BGS18]{birkner_coalescing_2018}
	Matthias Birkner, Nina Gantert, and Sebastian Steiber, \emph{Coalescing
		directed random walks on the backbone of a 1 +1-dimensional oriented
		percolation cluster converge to the {{Brownian}} web}, arXiv:1812.03733
	[math] (2018).
	
	\bibitem[DD09]{depauw_variance_2009}
	J\'er\^ome Depauw and Jean-Marc Derrien, \emph{Variance limite d'une marche
		al\'eatoire r\'eversible en milieu al\'eatoire sur {{Z}}}, Comptes Rendus
	Mathematique \textbf{347} (2009), no.~7, 401--406.
	
	\bibitem[DEF{\etalchar{+}}00]{donnelly_continuum-sites_2000}
	Peter Donnelly, Steven~N. Evans, Klaus Fleischmann, Thomas~G. Kurtz, and
	Xiaowen Zhou, \emph{Continuum-{{Sites Stepping}}-{{Stone Models}},
		{{Coalescing Exchangeable Partitions}} and {{Random Trees}}}, The Annals of
	Probability \textbf{28} (2000), no.~3, 1063--1110.
	
	\bibitem[Der15]{derrien_local_2015}
	Jean-Marc Derrien, \emph{A local limit theorem in stationary random environment
		of conductances on {{Z}}}, Bulletin de la Soci\'et\'e Math\'ematique de
	France \textbf{143} (2015), no.~3, 467--488.
	
	\bibitem[DMFGW89]{de_masi_invariance_1989}
	Anna De~Masi, Pablo~A. Ferrari, Sheldon Goldstein, and William~David Wick,
	\emph{An invariance principle for reversible {{Markov}} processes.
		{{Applications}} to random motions in random environments}, Journal of
	Statistical Physics \textbf{55} (1989), no.~3-4, 787--855.
	
	\bibitem[DR08]{durrett_one-dimensional_2008}
	Richard Durrett and Mateo Restrepo, \emph{One-dimensional stepping stone
		models, sardine genetics and {{Brownian}} local time}, The Annals of Applied
	Probability \textbf{18} (2008), no.~1, 334--358.
	
	\bibitem[EK86]{ethier_markov_1986}
	Stewart~N. Ethier and Thomas~G. Kurtz, \emph{Markov processes: Characterization
		and convergence}, {John Wiley \& Sons, Inc., New York}, 1986.
	
	\bibitem[Eth11]{etheridge_mathematical_2011}
	Alison Etheridge, \emph{Some mathematical models from population genetics},
	Lecture {{Notes}} in {{Mathematics}}, vol. 2012, {Springer, Heidelberg},
	2011, Lectures from the 39th Probability Summer School held in Saint-Flour,
	2009, Ecole d'Et\'e de Probabilit\'es de Saint-Flour.
	
	\bibitem[EVY18]{etheridge_rescaling_2018}
	Alison Etheridge, Amandine V\'eber, and Feng Yu, \emph{Rescaling limits of the
		spatial {{Lambda}}-{{Fleming}}-{{Viot}} process with selection}, arXiv
	preprint arXiv:1406.5884 (2018).
	
	\bibitem[FINR04]{fontes_brownian_2004}
	L.~R.~G. Fontes, Marco Isopi, Charles~M. Newman, and Krishnamurthi Ravishankar,
	\emph{The {{Brownian}} web: Characterization and convergence}, The Annals of
	Probability \textbf{32} (2004), no.~4, 2857--2883.
	
	\bibitem[GdHK18]{greven_hierarchical_2018}
	Andreas Greven, Frank den Hollander, and Anton Klimovsky, \emph{The
		hierarchical {{Cannings}} process in random environment}, Latin American
	Journal of Probability and Mathematical Statistics \textbf{XV} (2018),
	295--351.
	
	\bibitem[GKW01]{greven_interacting_2001}
	Andreas Greven, Achim Klenke, and Anton Wakolbinger, \emph{Interacting
		{{Fisher}}\textendash{{Wright}} diffusions in a catalytic medium},
	Probability Theory and Related Fields \textbf{120} (2001), no.~1, 85--117
	(en).
	
	\bibitem[GLW05]{greven_representation_2005}
	Andreas Greven, Vlada Limic, and Anita Winter, \emph{Representation theorems
		for interacting moran models, interacting fisher-wrighter diffusions and
		applications}, Electronic Journal of Probability \textbf{10} (2005),
	1286--1358.
	
	\bibitem[GSW16]{greven_continuum_2016}
	Andreas Greven, Rongfeng Sun, and Anita Winter, \emph{Continuum space limit of
		the genealogies of interacting {{Fleming}}-{{Viot}} processes on {{Z}}},
	Electronic Journal of Probability \textbf{21} (2016) (EN).
	
	\bibitem[Kim53]{kimura_stepping-stone_1953}
	Motoo Kimura, \emph{Stepping-stone model of population}, Annual Report of the
	National Institute of Genetics \textbf{3} (1953), 62--63.
	
	\bibitem[Koz85]{kozlov_method_1985}
	Sergei~Mikhailovich Kozlov, \emph{The method of averaging and walks in
		inhomogeneous environments}, Russian Mathematical Surveys \textbf{40} (1985),
	no.~2, 73--145.
	
	\bibitem[KW64]{kimura_stepping_1964}
	Motoo Kimura and George~H. Weiss, \emph{The stepping stone model of population
		structure and the decrease of genetic correlation with distance}, Genetics
	\textbf{49} (1964), no.~4, 561.
	
	\bibitem[Lam12]{lam_les_2012}
	Hoang~Chuong Lam, \emph{Les th\'eoremes limites pour des processus
		stationnaires}, Ph.D. thesis, Universit\'e de Tours, 2012.
	
	\bibitem[Lam14]{lam_quenched_2014}
	Hoang-Chuong Lam, \emph{A {{Quenched Central Limit Theorem}} for {{Reversible
				Random Walks}} in a {{Random Environment}} on {{Z}}}, Journal of Applied
	Probability \textbf{51} (2014), no.~4, 1051--1064 (en).
	
	\bibitem[Lia09]{liang_two_2009}
	Richard~Hwa Liang, \emph{Two continuum-sites stepping stone models in
		population genetics with delayed coalescence}, Ph.D. thesis, University of
	California, Berkeley, 2009.
	
	\bibitem[Mal48]{malecot_les_1948}
	Gustave Mal\'ecot, \emph{Les {{Math\'ematiques}} de l'{{H\'er\'edit\'e}}},
	{Masson et Cie., Paris}, 1948.
	
	\bibitem[Mar71]{maruyama_rate_1971}
	Takeo Maruyama, \emph{The rate of decrease of heterozygosity in a population
		occupying a circular or a linear habitat}, Genetics \textbf{67} (1971),
	no.~3, 437--454.
	
	\bibitem[Mil16]{miller_random_2016}
	Katja Miller, \emph{Random walks on weighted, oriented percolation clusters},
	Latin American Journal of Probability and Mathematical Statistics \textbf{13}
	(2016), 53--77.
	
	\bibitem[MT95]{mueller_stochastic_1995}
	Carl Mueller and Roger Tribe, \emph{Stochastic pde's arising from the long
		range contact and long range voter processes}, Probability theory and related
	fields \textbf{102} (1995), no.~4, 519--545.
	
	\bibitem[RCB17]{ringbauer_inferring_2017}
	Harald Ringbauer, Graham Coop, and Nicholas~H. Barton, \emph{Inferring {{Recent
				Demography}} from {{Isolation}} by {{Distance}} of {{Long Shared Sequence
				Blocks}}}, Genetics \textbf{205} (2017), no.~3, 1335--1351 (en).
	
	\bibitem[Reb80]{rebolledo_central_1980}
	Rolando Rebolledo, \emph{Central limit theorems for local martingales},
	Zeitschrift f\"ur Wahrscheinlichkeitstheorie und Verwandte Gebiete
	\textbf{51} (1980), no.~3, 269--286.
	
	\bibitem[Rou97]{rousset_genetic_1997}
	Fran{\c c}ois Rousset, \emph{Genetic differentiation and estimation of gene
		flow from {{F}}-statistics under isolation by distance}, Genetics
	\textbf{145} (1997), no.~4, 1219--1228.
	
	\bibitem[Saw76]{sawyer_results_1976}
	Stanley Sawyer, \emph{Results for the {{Stepping Stone Model}} for
		{{Migration}} in {{Population Genetics}}}, The Annals of Probability
	\textbf{4} (1976), no.~5, 699--728.
	
	\bibitem[Saw77]{sawyer_asymptotic_1977}
	\bysame, \emph{Asymptotic properties of the equilibrium probability of identity
		in a geographically structured population}, Advances in Applied Probability
	\textbf{9} (1977), no.~2, 268--282.
	
	\bibitem[Shi88]{shiga_stepping_1988}
	Tokuzo Shiga, \emph{Stepping {{Stone Models}} in {{Population Genetics}} and
		{{Population Dynamics}}}, Stochastic {{Processes}} in {{Physics}} and
	{{Engineering}}, Mathematics and {{Its Applications}}, {Springer}, 1988,
	pp.~345--355.
	
	\bibitem[SSS15]{schertzer_brownian_2015}
	Emmanuel Schertzer, Rongfeng Sun, and Jan Swart, \emph{The {{Brownian}} web,
		the {{Brownian}} net, and their universality}, Advances in Disordered
	Systems, Random Processes and Some Applications (2015), 270--368.
	
	\bibitem[Wri43]{wright_isolation_1943}
	Sewall Wright, \emph{Isolation by distance}, Genetics \textbf{28} (1943),
	no.~2, 114.
	
\end{thebibliography}
\end{document}